\newtheorem{theorem}{Theorem}
\newtheorem{corollary}[theorem]{Corollary}
\newtheorem{definition}[theorem]{Definition}
\newtheorem{example}[theorem]{Example}
\newtheorem{lemma}[theorem]{Lemma}
\newtheorem{proposition}[theorem]{Proposition}
 \newcommand{\mc}[1]{{\mathcal #1}}
 \newcommand{\bb}[1]{{\mathbb #1}}
\newcommand{\<}{\langle}
 \renewcommand{\>}{\rangle}
\newcommand{\pfrac}[2]{\genfrac{}{}{}{1}{#1}{#2}}
\newcommand{\at}[2]{\genfrac{}{}{0pt}{}{#1}{#2}}
\begin{document}

\title[A thermodynamic formalism for continuous time Markov chains]{A thermodynamic formalism for continuous time Markov chains with values on the Bernoulli Space: entropy, pressure and large deviations}

\author{Artur Lopes}
\address{UFRGS, Instituto de Matem\'atica, Av. Bento Gon\c calves, 9500. CEP 91509-900, Porto Alegre, Brasil}
\curraddr{}
\email{arturoscar.lopes@gmail.com}
\thanks{}

\author{Adriana Neumann}
\address{UFRGS, Instituto de Matem\'atica, Av. Bento Gon\c calves, 9500. CEP 91509-900, Porto Alegre, Brasil}
\curraddr{}
\email{aneumann@impa.br}
\thanks{}

\author{Philippe Thieullen}
\address{Institut de Math\'ematiques, Universit\'e Bordeaux 1, Bourdeaux, France}
\curraddr{}
\email{philippe.thieullen@math.u-bordeaux1.fr}
\thanks{}

\date{\today}
\maketitle

\begin{abstract}

Through this paper we analyze the ergodic properties of continuous time Markov chains with values on the one-dimensional spin lattice
$\{1,\dots,d\}^{\bb N}$ (also known as the Bernoulli space). Initially, we consider as the infinitesimal generator the operator $L={\mc L}_A -I$,
where ${\mc L}_A$ is a discrete time Ruelle operator (transfer operator), and $A:\{1,\dots,d\}^{\bb N}\to \mathbb{R}$ is a given fixed Lipschitz function.
The associated continuous time stationary Markov chain will define the\emph{ a priori }probability.

Given a Lipschitz interaction $V:\{1,\dots,d\}^{\bb N}\to \mathbb{R}$, we are interested in Gibbs (equilibrium) state for such $V$.
This will be  another continuous time stationary Markov chain. In order to analyze this problem we will use  a continuous time Ruelle operator (transfer operator)
naturally associated to $V$.
Among other things we  will show that  a continuous time Perron-Frobenius Theorem is true in the case $V$ is a   Lipschitz function.

We also introduce an entropy, which is negative (see also \cite{LMMS}), and we consider a variational principle of pressure. Finally, we
analyze  large deviations properties for the empirical measure in the continuous time setting using results by Y. Kifer (see \cite{Ki1}).
In the last appendix of the paper we explain why the techniques we develop here have the capability to be applied to the analysis of
convergence of a certain version of the Metropolis algorithm.

\end{abstract}

\bigskip

\section{Introduction}

In this paper we will consider thermodynamic formalism in a continuous time setting in a similar way as in  \cite{BCLMS} and \cite{LMMS},
where the time is discrete.
In order to be able to work in this new context (continuous time) we need to consider first a stationary continuous time Markov
chain, and this will define the \emph{a priori probability}, on the space of trajectories. The infinitesimal generator of this
continuous time  Markov chain  will be associated to a  discrete time Ruelle operator. Namely, we consider as the infinitesimal generator the operator $L={\mc L}_A -I$,
where ${\mc L}_A$ is a discrete time Ruelle operator.

In the continuous time setting
we will  be able to define a new Ruelle operator, in a similar fashion as in  \cite{LMMS}.
The continuous time setting  requires some extra effort to get results, as can be seen in \cite{BEL} and \cite{kl}.
However, we will be able to get here the analogous  properties of the Ruelle operator which appear in the discrete time setting (transfer operators).
Based on the theory of stochastic processes we can define the  continuous time Ruelle operator, as well as the
entropy and the pressure in this new context.

The Heat-Bath Glauber dynamics is a continuous time
Markov chain as  described in \cite{BKMP}. Questions related to the Ising model on a regular tree are consider in
this mentioned work. The infinitesimal generator we consider here is a generalization of (1) in this paper.
Our setting is a general one where several possible models of Statistical Mechanics can fit well (see for instance \cite{SZ}).

In a future work we will apply the techniques we developed here to the analysis of a special version of the Metropolis algorithm (see
\cite{RT}, \cite{DS1}, \cite{DS2} and \cite{Leb})
which will be  suitable for applications in problems where the state space is the one-dimensional spin lattice.
Suppose $A$ is fixed for good
(in this way we fix an a priori probability). Given a certain function
$V: \{1,\dots,d\}^{\bb N}\to \mathbb{R}$  we would like to find
the point $x \in \{1,\dots,d\}^{\bb N}$ which maximize this function. For each value $\beta>0$ one can consider the potential $\beta V$
and the associated Gibbs state $\bb P^{\beta \, V}$ which is a probability over the set of  continuous time  paths (a new continuous time Markov chain). Now, from ergodicity,
if we choose at random a continuous time
sample path we get a good approximation for the occupation time  probability on  $\{1,\dots,d\}^{\bb N}$ (Monte-Carlo method). This path can be
seen as a random  algorithm which is
exploring the configuration space $\{1,\dots,d\}^{\bb N}.$  In Appendix F we show that if we take $\beta$ more and more large,
then, the sample path we choose will stay more an more time close to the maximimum of $V$. For  large and fixed $\beta$ it is
important, from the point of view of the algorithm, to understand the large deviation properties of the associated empirical probability
of the path on $\{1,\dots,d\}^{\bb N}.$ This is related to the second part of our paper. This will be carefully explained in the end of  Appendix F.

We point out that some of the results we obtain in our  paper are due to the good properties
already known for the classical  Ruelle operators ${\mc L}_A$ on discrete time (transfer operators).
 So we begin by recalling some important topics of this subject.

Consider the shift $\sigma$ acting on the one-dimensional spin lattice $\{1,\dots,d\}^{\bb N}$.
We denote by $P(B)$ the pressure of the potential $B: \{1,\dots,d\}^{\bb N}\to \mathbb{R}$ (see \cite{Cra}, \cite{LoT} and \cite{PP}).
The value $P(B)$ is the supremum of $h(\mu)+ \int B \mbox{d} \mu$, among all $\sigma$-invariant probabilities on $\{1,\dots,d\}^{\bb N}$,
where $h(\mu)$ is the Kolmogorov entropy of the invariant probability $\mu$.
If $B$ is Lipschitz there exists a unique $\mu_B$ such that
$P(B)= h(\mu_B) + \int B \mbox{d} \mu_B.$ We call $\mu_B$  the (discrete time)  equilibrium state for $B$ (see \cite{LoT} and \cite{PP}).
Each point $x\in \{1,\dots,d\}^{\bb N}$ has a finite number of preimages $y\in \{1,\dots,d\}^{\bb N}$ by $\sigma$.
For  a  Lipschitz potential $B$ we define the  Ruelle operator by
\begin{equation*}
 {\mc L}_B(f) \, (x) =\sum_{\sigma(y) =x} \, e^{B(y)}
\,f(y)\,,
\end{equation*}
for any continuous function $f:\{1,\dots,d\}^{\bb N}\to\bb R$ and $x\in\{1,\dots,d\}^{\bb N}$.
We say a   Lipschitz potential $A:\{1,\dots,d\}^{\bb N}\to\bb R$ is normalized if for any $x\in \{1,\dots,d\}^{\bb N}$ we have
\begin{equation*}
\sum_{\sigma(y)=x} e^{A(y)}=1\,.
\end{equation*}
To assume that all the potentials which we consider are Lipschitz is an essential issue (but, it could be relaxed to Holder).
Nice references in thermodynamic formalism are \cite{Bal} and \cite{Rue}.

The dual of ${\mc L}_A$ is the operator ${\mc L}_A^*$, which acts on probabilities on $\{1,\dots,d\}^{\bb N}$ in the following way:
\begin{equation*}
\int g\,  \,\mbox{d}{\mc L}_A^*(\nu)= \int {\mc L}_A(g)\, \mbox{d} \nu\,,
\end{equation*}
 for any continuous function $g:\{1,\dots,d\}^{\bb N}\to \mathbb{R}$.
The probability  $\nu$ such that ${\mc L}_A^*(\nu)=\nu$ is called the (discrete time) Gibbs probability.
If $A$ is a Lipschitz  normalized potential, we have $P(A)=0$, and,
one can show that   ${\mc L}_A^*(\mu_A)= \mu_A$. There is a unique fixed point probability  for ${\mc L}_A^*$.
In this case the Gibbs state for $A$ is the equilibrium state for $A$ (see \cite{PP}).
Equilibrium states describe the probabilities that
 naturally appear in problems in Statistical Mechanics over the one-dimensional lattice $\{1,\dots,d\}^{\bb N}$.

After this brief introduction on discrete time dynamics,
we consider now the setting in which we will get our  main results.
Let $\mc D:=\mc D\big([0,+\infty), \{1,\dots,d\}^{\bb N}\big)$ be the path space of
\emph{c\`adl\`ag} (right continuous with left limits) trajectories  taking values in $\{1,\dots,d\}^{\bb N}$ (see \cite{Li} and \cite{Pro}).
This space is usually endowed with the Skorohod metric (for more details about this metric see \cite{EK}), and it is called  the Skorohod space. A typical element of $\mc D$ is a function $\omega:[0,\infty)\to \{1,\dots,d\}^{\bb N}$
 which is right continuous and has left limit in all points.
This space is
complete and has a countable dense  set, in other words, it is a Polish space, but it is not compact (see \cite{EK}).
The continuous time dynamics that we consider here will be given by the action of
the continuous time shift $\Theta_t: \mc D\to \mc D, t\geq 0$. Given $t_0>0$
and a path $\omega\in \mc D$ on the Skorohod space, then, $\Theta_{t_0}(\omega)$
is the path $\eta$ such that $\eta(t)= \omega(t+t_0)$, for all $t\geq 0$. We consider here the dynamics associated to such  semiflow,
$\{\Theta_t, \,t\geq 0\}$. Notice that the transformation $\Theta_t$ is not injective, because for a fixed $t$ and
for each $\eta\in \mc D$ there exists an uncountable number of preimages $\omega\in \mc D$ such that $\Theta_t(\omega)=\eta$.

We said that the probability $\tilde{\bb P}$ on the Skorohod space is invariant if it is invariant for the semiflow
$\{\Theta_t, t\geq 0\}$; that is, for any Borel set $\mc K$  in $\mc D$ and $t>0$, we have
$\tilde{\bb P}[ \Theta_t^{-1} (\mc K)]= \tilde{\bb P}[\mc K]$.
In order to find  invariant  probabilities on the Skorohod space,
it is natural  to consider a continuous time Markov chain taking values on
the one-dimensional spin lattice (we point out that not all invariant  probabilities on the Skorohod space  appear on this way).
In this direction, we will use a Ruelle operator (transfer operator) with Lipschitz normalized potencial $A:\{1,\dots,d\}^{\bb N}\to \mathbb{R}$ for defining
the infinitesimal generator of a  continuous time
Markov chain in the form
 \begin{equation*}
({\mc L}_A -I)(f)  (x)\,= \,\sum_{\sigma(y) =x} \, e^{A(y)} \,[f(y)-f(x)] \,,
 \end{equation*}
for all bounded measurable function $f:\{1,\dots,d\}^{\bb N}\to\bb R$ and $x\in\{1,\dots,d\}^{\bb N}$.

Denote by $L:={\mc L}_A -I$ this infinitesimal generator.
For $x\in\{1,\dots,d\}^{\bb N}$, consider an initial probability measure $\delta_x$ on $\{1,\dots,d\}^{\bb N}$, and
denote by $\bb P_{x}$ the
probability measure on
$\mc D$,
which is induced by the infinitesimal generator $L$ and the initial probability $\delta_x$.
It defines a  Markov process
$\{X_t;\, t\geq 0\}$ with values on the state space
$\{1,\dots,d\}^{\bb N}$ (see \cite{DS}, \cite{DL},  and \cite{kl}).
As usual, when necessary, we will consider the canonical version of the process,
i.e., $X_s(\omega)=\omega(s):=\omega_s$, for any $\omega \in \mc D$ and $s\geq 0$.
 The stochastic semigroup generated by $L$ is $\{P_t:=e^{tL},\, t\geq 0\}$
(the operator $L$ is bounded and $L(1)\equiv 0$).
The expectation concerning $\bb P_{x}$ is denoted by $\bb
E_{x}$.
Given $\mu$ an initial probability on $\{1,\dots,d\}^{\bb N}$, we can define the probability $\bb P_\mu$ on $\mc D$ as
$$\bb P_\mu[\mc K]=\int_{\{1,\dots,d\}^{\bb N}}\bb P_{x}[\mc K]\,\mbox{d}\mu(x)\,,$$
for all Borel set $\mc K\subset\mc D$.

The above process describes the behavior
of  a particle, such that when  located at $x\in \{1,\dots,d\}^{\bb N}$,
 jumps to one of its $\sigma$-preimages $y$, with probabilities described by
 $e^{A(y)}$ and after an exponential time of parameter $1$.
Notice that for almost every trajectory $\omega$ beginning in
$x=\omega_0$, all  the values $\omega_t, t\geq 0$, which are possibly
attained belong to the {\it total pre-orbit set}, by the shift $\sigma$, of the initial point $x$,
that is, the set of $y$ {\it  such that for some $n\in \mathbb{N}$ we have $\sigma^n (y)=x$}.
The space $\{1,\dots,d\}^{\bb N}$ is not countable. We point out that in most of the papers in the literature the state space  is finite (or, countable).
In this last situation the infinitesimal generator is  a matrix which satisfies the
condition of line sum zero. Here this matrix is replaced by an operator described  by
the expression $L={\mc L}_A -I$, where $A$ is normalized.

The discrete Gibbs state probability $\mu_A$ over  $\{1,\dots,d\}^{\bb N}$ (see \cite{PP}) for the potential $A: \{1,\dots,d\}^{\bb N}\to \mathbb{R}$ clearly
satisfies that
\begin{equation*}
\int L(f)\, \mbox{d} \mu_A=0\,,
\end{equation*}
for all $f$ continuous function, where $L={\mc L}_A -I$.
This is the condition for stationarity of the initial probability of the continuous time Markov chain generated by $L$ (see \cite{S}).
Using that $P_t=e^{tL}$,  we get $\int f \mbox{d}\mu_A= \int P_t f \mbox{d}\mu_A$, for all  $f$ and $t\geq 0$.
Therefore, $\mu_A$ is a stationary initial measure for the continuous time Markov chain associated to the stochastic semigroup $\{P_t, t\geq 0\}$.
Notice that there is a unique probability such that $\mathcal{L}_A^*(\mu_A)=\mu_A$ (see \cite{PP}).
This shows that the initial stationary probability for the Markov semigroup $P_t$ is unique.
The associated probability $\bb P_{\mu_{A}}$ on the Skorohod space is invariant for the semiflow $\{\Theta_t, \,t \geq 0\}$.
In this way by taking different potentials $A$ we can get a large number of invariant probabilities for the continuous time  semiflow.
In appendix G we show that the stationary probability $\bb P_{\mu_{A}}$ is ergodic for the continuous time shift $\{\Theta_t, \,t \geq 0\}$.

One can also ask if this
process $\{X_t=X_t^{\mu_A}\!\!,\;\,\, t \geq 0\}$, with initial condition $\mu_A$, is ergodic for the stochastic semigroup, that is,
if the following is true: if for a given measurable $f$ we have that  $L(f)=0$, then,  $f$ is constant $\mu_{A}$ - a.s.
This is indeed the case  and it will be proved in the beginning of next section. We point out that now the meaning of the word ergodic for $\mu_A$ (a probability on the state space $\{1,\dots,d\}^{\bb N}$) is for the continuous time evolution of the stochastic semigroup.

The probability $\bb P_{\mu_{A}}$ induced on $\mc D$ by $L$
and the initial probability $\mu_A$ will be called the \emph{ a priori probability}.
The process $\{X_t=X_t^{\mu_A}\!\!,\;\,\, t \geq 0\}$ is called the {\it{a priori process}}.
We  will  need all of the above in order to define  the continuous time Ruelle operator.

One can ask if $L={\mc L}_A- I$ acting on the Hilbert space $\bb L^2(\mu_A)$ is symmetric.
The answer to this question  is no, because
$L^* = {\mc K} - I,$ where ${\mc K}$ is the Koopman operator,
$g\,\to \,{\mc K}(g) = g \circ \sigma$ (according to \cite{PP}). Therefore, in our setting  the process is not reversible.
In order to make the system reversible we could consider, as usual,
 the generator $\frac{1}{2} (L + L^*).$ For this new process the particle can jump either way:
forward or backward (for the action of  $\sigma$). We briefly consider such process in the end of the paper (see Appendix \ref{end}).

In our reasoning we will consider a fixed choice of $A$ and this defines an\emph{ a priori }probability.
After this is settled,
 we want to analyze the disturbed system by  the intervention of an external  Lipschitz
potential $V: \{1,\dots,d\}^{\bb N}\to \mathbb{R}$.
More precisely, we would like to obtain a new continuous time Markov chain $\{Y^V_T,\,T\geq 0\}$, with state space $\{1,\dots,d\}^{\bb N}$,
which plays the role of
the {\it continuous time} Gibbs state for $V$.
In order to obtain this new process $\{Y^V_T,\,T\geq 0\}$, we need to define the continuous time Ruelle operator acting on
functions defined in the Bernoulli space, based in Feynman-Kac theory (see, for example, \cite{kl} and \cite{S}).
We will also need to show the existence of an eigenfunction $F: \{1,\dots,d\}^{\bb N}\to \mathbb{R}$
 in the case that $V$ is a   Lipschitz function.

We will show that given a Lipschitz potential $V$ there exists   $\lambda=\lambda_V$ and a positive function $F=F_\lambda$ such that for any $T\geq 0$,
$$
e^{T\,(L+V)}(F)= e^{{\lambda_V}  \, T} F\,.$$

\bigskip
One can consider alternatively a continuous time Markov chain associated to a discrete time Ruelle operator
in a more general setting. In fact, this will naturally occur as we will see in the analysis of the  {\it continuous time} Gibbs state for $V$.
When we defined the initial Markov process $\{X_t,\,t\geq 0\}$, we could have chosen another parameter for the exponential clock (not constant equal to $1$).
Below we briefly present how to proceed in this situations.

Let $\gamma $ a continuous positive  function and $B$ a Lipschitz normalized potential,
one could also consider a more general  operator
\begin{equation*}
L_{\gamma,B} (f )(x)= \gamma(x) \sum_{\sigma(y) =x} \, e^{B(y)} \,[f(y)-f(x)] \,,
\end{equation*}
acting on bounded measurable  functions $f:\{1,\dots,d\}^{\bb N}\to \mathbb{R}$. Notice that $L_{\gamma,B}=\gamma\,( {\mc L}_B-
I)$.
We point out that most of the results we will prove in this paper
 are also true if the\emph{ a priori }probability is defined via the stochastic semigroup
$\{e^{\,t L_{\gamma,B}}, \,t\geq 0\}$ (and the associated stationary initial probability), instead of $\{e^{tL},\,t\geq 0\}$.
In this case, if we denote $\mu_{B,\gamma} =\frac{1}{\gamma}\,
\frac{\mu_B}{\int \frac{1}{\gamma}\, \mbox{d} \mu_B},$ then, for any continuous function $f:\{1,\dots,d\}^{\bb N}\to \mathbb{R}$
$$\int L_{\gamma,B}(f) \,\mbox{d} \mu_{B,\gamma}=0\,,$$
where $\mu_B$ is the {\it discrete time} equilibrium state for $B$.
Then, $\mu_{B,\gamma}  $ is the   initial stationary probability for the continuous time Markov process with infinitesimal generator $L_{\gamma,B}$.
It is also stationary for the flow $\{\Theta_t, \,t\geq 0\}$.
Notice that $\mu_{B,\gamma}  $ is not invariant for the discrete time action of the shift $\sigma$.
The probability $\mu_B$ is invariant for the discrete time shift $\sigma$.

We denote by $\{Z_t,\,t\geq 0\}$ the continuous time Markov chain taking
values in the one-dimensional spin lattice $\{1,\dots,d\}^{\bb N}$ generated by  such $L_{\gamma,B}$ and a given initial measure
(not necessarily the process needs to begin on a stationary probability). The process $\{Z_t,\,t\geq 0\}$ with infinitesimal generator
$L_{\gamma,B}$  can be described in the following: if the particle is located at $x\in\{1,\dots,d\}^{\bb N}$,
 then it waits an exponential time
of parameter
$\gamma(x)$, and, then
it jumps to a $\sigma$-preimage $y$ with probability $e^{B (y)}$.
As we will see in the third section of this paper, there exist  $\gamma$ and $B$ which
 naturally appear when we have to  describe properties of what we will call  the
{\it continuous time} Gibbs state for $V$.

\smallskip

Let's come  back to the original setting where the a priory probability on the Skorohod space was defined by
 the process defined by the infinitesimal generator $L= {\mc L}_A- I.$
In order to present in advance the final solution, we can say that the
  {\it continuous time} Gibbs state for $V$  is the process $\{Y^V_T,\,T\geq 0\}$,
which has the infinitesimal generator acting on bounded mensurable functions $f:\{1,\dots,d\}^{\bb N}\to\bb R$ given by
\begin{equation*}
 L^{V}(f)(x)=\gamma_{V}(x)\, \sum_{\sigma(y)=x}e^{B_{V}(y)}\big[f(y)-f(x)\big]\,,
\end{equation*}
where $B_{V}(y):= A(y)- \log \gamma_V(\sigma(y)) +\log F_V(y)-\log F_V(\sigma(y))$,  $\gamma_{V}(x):=1- V(x)+\lambda_{V}$
and the function $F_V$ is such that
$$ \frac{\mc L_A (F_V)(x)}{F_V(x)}=\sum_{\sigma(y)=x} \, \frac{e^{A(y)}\, F_V(y)}{F_V(x)} = 1- V(x)+\lambda_V\,.$$
The appearance of the term $\gamma_V$ in the infinitesimal  generator $L^{V}$ introduce a new element which was not present
in the classical discrete time setting.
This continuous time stationary Markov chain describes the solution one naturally get, from the point of view of Statistical Mechanics,
 for a system under the influence of an external potential $V$.

Now, we can ask: \textquotedblleft Is there a maximizing pressure principle on this setting?"  and
\textquotedblleft Can we talk about entropy in this setting?"  In other words: is this stationary Gibbs probability  an equilibrium measure
in some sense? These questions appear naturally for the discrete time
 Ruelle operator setting (thermodynamic formalism).
Answering these questions is one of the purposes of the present work.
Given an a priory probability (associated to $A$) we will define an entropy for a class of  continuous time Markov chains.
It will be a non-positive number.

Lastly, we study the large deviation principle for the empirical measure associated to the {\it{{\it{\emph{ a priori }process}}}}.
So that one can consider, for each $t\geq 0$ and each $\omega\in\mc D$,   the empirical probability $L_t^\omega$ defined by the
occupational time of the process $\{X_t,\, t\geq 0\}$  on a set, that is, for any Borel $\Gamma\subset\{1,\dots,d\}^{\bb N} $, we have
\begin{equation*}
 \begin{split}
  L_t^\omega (\Gamma)\,=
\, \frac{1}{t} \int_0^t \textbf{1}_{\,\Gamma} (X_s(\omega))\, \mbox{d}s\,.
 \end{split}
\end{equation*}
Then, under ergodicity, we have  $\lim_{t \to \infty} L_t^\omega = \mu_A$, $\bb P_{\mu_A}$-almost surely $\omega$
 (see  page 108 in \cite{S}).
The Ergodic Theorem  says little or nothing about the rate of convergence.
Since $ L_t^\omega$ is random,  it is almost unavoidable to ask oneself about deviations
from the stationary measure $\mu_A$.

Let $\mc M(\{1,\dots,d\}^{\bb N})$ be the set of all measures on $\{1,\dots,d\}^{\bb N}$.
The  large deviation rate function $I:\mc M(\{1,\dots,d\}^{\bb N}) \to \mathbb{R} $, associated to this continuous time process $\{X_t, \,t\geq 0\}$,
helps to estimate the exponential decay of the asymptotic empirical probability of deviations
from the stationary measure $\mu_A$,  when the time parameter $t$  goes to infinity.
Thus, we are naturally led to the investigation and identification
of the large deviations rate function in the set of the measures on Bernoulli space.
We will analyze large deviation properties of the empirical probability (as we mentioned before the system we consider is not reversible).
This is also known as level two large deviation theory (see \cite{DL}, \cite{Ellis} and \cite{Ki1}).
The level one large deviation principle follows by standard procedures: Orey's contraction principle (see for instance \cite{artur1}).

It is important to remark that the understanding of previous results which were obtained for a general potential $V$ plays a fundamental
role in the large deviation  properties of the unperturbed system (with infinitesimal generator $L=\mathcal{L}_A - I)$.
This follows the general philosophy of
\cite{DV1}, \cite{DS} \cite{Ki1} and   \cite{Ki2}.

Suppose $\lambda_V$ is the main eigenvalue we get from the continuous time Ruelle-Perron Operator for $V$. We denote by $\mc C$
the set of continuous functions and by $\mc C^+$ the set of strictly positive continuous functions.

Our main result in the second part of the paper is:
\bigskip

{\bf Theorem A: }  A large deviation principle at  level two for the {\it{a priori process}}  $\{X_t=X_t^{\mu_A}\!\!,\;\,\, t \geq 0\}$ generated by $L= {\mc L}_A- I$
is true with the deviation function $I$ given
$$I(\nu) =\sup_{V\in \mc C}\, \,\Big(\int V \,\mbox{d} \nu\, -\, Q(V)\Big)\,,$$
where $Q(V)$ is  a function which is equal to  the main eigenvalue $\lambda_V$ when $V$ is Lipschitz.

Moreover,

$$
I(\nu) =     - \inf_{u\in \mc C^+}\, \int \frac{L(u)}{u}\, \mbox{d} \nu\,.
$$

\bigskip

We point out that the above Theorem \ref{Ri} in \cite{Ki1} (see also \cite{Ki2})
is presented in a different setting: the state space  is a  Riemannian manifold and it is considered a
certain class of differential operators as infinitesimal generators. We do not consider here such differentiable structure.
\bigskip

The paper is divide in sections as follows: in Section 2, we present the continuous time Ruelle operator and we prove
 the continuous time Perron-Frobenius Theorem. In Section 3,  we present the  continuous time Gibbs state for $V$. This is a continuous time stationary process.
In Section 4, we define relative entropy, pressure and equilibrium state for $V$, and we also prove a variational principle for the Gibbs state.
In Section 5, the main result that we will get is the large deviation principle for the empirical measure associated to the {\it{a priori process}}.
Finally, in the  Appendix we show many technical results using basic tools of continuous time Markov chains.  Among them: we present a Radon-Nikodim derivative result,  we
briefly comment on
the spectrum of ${\mc L}_A - I+V$ on ${\bb L}^2 (\mu)$, where $\mu$ is a natural probability on the Bernoulli space $\{1,\dots,d\}^{\bb N}$, and, finally, some remarks on the associated symmetric process. In this last section we consider a fixed potential $V$ and
we ask about the limit of the invariant probability (invariant for the continuous time equilibrium Gibbs state for $\beta\, V$, when $\beta$ is large) over $\{1,\dots,d\}^{\bb N}$ when temperature goes to zero.

\section{Disturbing the system by an external Lipschitz potential $V$: \\ the continuous time Perron-Frobenius Theorem.}

First of all we recall the definition of the {\it{a priori process}}.
A Lipschitz normalized  potential $A$ will be considered fixed through the whole paper. We denote by $\{P_t,\,t\geq 0\}$,
the stochastic semigroup generated by $L=\mathcal {L}_A-I$.
We need an\emph{ a priori }continuous time  stationary probability for our reasoning,
 for this reason we are considering $\bb P_{\mu_A}$ the probability obtained from the semigroup  $\{P_t$, $t\geq 0\}$ and the initial probability  $\mu_A$.
As we have said, this probability $\bb P_{\mu_{A}}$ plays the role of the\emph{ a priori }measure (see \cite{BCLMS} and \cite{LMMS}).
The associated stochastic process will be denoted by $\{X_t=X_t^{\mu_A}\!\!,\;\,\, t \geq 0\}$.

Given a continuous time stochastic semigroup with compact state space and an initial stationary probability we get a continuous time invariant probability on the Skhorohod space. The continuous time Birkhoff Theorem associated to the continuous time stochastic semigroup (for not necessarily ergodic probabilities) is true (see Remark 1 on page 382 in \cite{Y} or
Theorem 17 page 708 and Exercise 19 page 721 in \cite{DS}).

Therefore, given a continuous function $g:\{1,\dots,d\}^{\bb N}\to \mathbb{R} $ we get an integrable measurable function $f:\{1,\dots,d\}^{\bb N}\to \mathbb{R} $ which describes the possible mean continuous time limits for $g$. This function $f$ is invariant for the action of the stochastic semigroup. Therefore, $L(f)=0.$
In the case $f$ is constant $\mu_{A}$ - a.e.w. then the mean continuous time limits for $g$ are  all the same $\mu_{A}$ - a.e.w. and equal to the $\mu_{A}$ space average on $\{1,\dots,d\}^{\bb N}$.

The probability $\mu_A$ is ergodic for the continuous time action, that is,
the following is true: if for a given $f$ we have that  $L(f)=0$,  then,  $f$ is constant $\mu_{A}$ - a.e.w.
This  follows from the following simple argument suggested by D. Smania:
suppose ${\mc L}_A(f)=f$ for a $\mu_{A}$-integrable $f$, then, for a given $\epsilon$ we can write
$f=g + w$, where $w$ is integrable with $L^1(\mu_A)$ norm smaller than $\epsilon$ and $g$ is Lipchitz. Then,
$f= {\mc L}_A^n(f)= {\mc L}_A^n(g) + {\mc L}_A^n(w)$.

Note that ${\mc L}_A^n(w)$ has $L^1$  norm smaller then $\epsilon$.
Moreover, ${\mc L}_A^n(g)$ converges to a constant $a_g$, where $a_g$ is $\int f d\mu_A$ up to $\epsilon$ (see Theorem 2.2 (iv) \cite{PP}).
Therefore, taking the limit in $n$ we get that $f- a_g $ has norm smaller than $\epsilon$.
Now, taking $\epsilon \to 0$, we get that $a_g$ converges to $ \int f d\mu_A.$
Therefore, for all $x$, $\mu_A$ - a.e.w, we have that
$f(x)= \int f  d\mu_A.$

In the same spirit of  Classical thermodynamic formalism (see \cite{PP}), given a potential $V$ (an interaction),
we want to get here another continuous time Markov process which will be the equilibrium stationary
process for the system under the influence of the potential $V$.

Let $V:\{1,\dots, d\}^{\bb N}\to\bb R$ a  Lipschitz function and consider the operator  $L+V={\mc L}_A -I+V$,
which acts on mensurable and bounded functions $f:\{1,\dots,d\}^{\bb N}\to \bb{R}$ by the expression
\begin{equation*}
(L+V)(f)  (x)\,= \,({\mc L}_A -I)(f)  (x)\,+\,V(x)f(x) \,,
 \end{equation*}
 for all  $x\in\{1,\dots, d\}^{\bb N}$.
 For $T\geq 0$, we consider
\begin{equation}\label{0}
P_{T}^V (f)(x)\,:=\, \bb E_{x} \big[e^{\int_0^{T} V(X_r)\,dr} f(X_T)\big]\,,
\end{equation}
 for all continuous function $f:\{1,\dots, d\}^{\bb N}\to \bb{R}$ and $x\in\{1,\dots, d\}^{\bb N}$. By Feynman-Kac, $\{P_{T}^V,\,T\geq 0\}$
defines a semigroup associated to the infinitesimal operator $L+V={\mc L}_A -I+V$ (see Appendix 1.7 in \cite{kl}).

Let $\mc C$ be the space of continuous functions from $\{1,\dots,d\}^{\bb N}$ to $\bb{R}$ endowed with uniform topology.
Denote by  $\mc C^+$ the subspace of  functions of $\mc C$ which  are strictly positive.
Let $\mc P(\{1,\dots,d\}^{\bb N})$  be the space of probabilities on the Borel sigma-algebra of the one-dimensional spin lattice $\{1,\dots,d\}^{\bb N}$.
Define $\mc M(\{1,\dots,d\}^{\bb N})$ as the space of measures on the Borel sigma-algebra of the Bernoulli space $\{1,\dots,d\}^{\bb N}$.

Notice that, in general, this semigroup in not stochastic, because $P_{T}^V (1)(x)\neq 1$. We want to associate to this semigroup, another one which is also stochastic, this will be only possible due to the next result,
which we consider the main one in this section.

\begin{theorem}[Continuous Time Perron-Frobenius Theorem]\label{prop2} Suppose that $V$ is a  Lipschitz function.
 Then, there exists a strictly positive Lipschitz eigenfunction
 $F:\{1,\dots,d\}^{\bb N}\to (0,+\infty)$
 for the family of operators
$P_{T}^V:\mc C\to\mc C$, $T\geq 0$, associated to an eigenvalue $e^{{\lambda_V}  \, T}$, where  $\lambda=\lambda_V$ depends only on $V$.
By this we mean: for any $T\geq 0$,
\begin{equation*}
P_{T}^V(F)= e^{{\lambda_V}  \, T} F\,.
\end{equation*}
The eigenvalue $\lambda_V$ is simple and it is equal to the spectral radius (maximal).
Moreover, there exists a eigenprobability $\nu_V$ in $\mc P(\{1,\dots,d\}^{\bb N})$ such that
\begin{equation*}
(P_{T}^V)^*(\nu_V)= e^{{\lambda_V}  \, T} \nu_V\,,\quad\forall T\geq 0\,.
\end{equation*}
\end{theorem}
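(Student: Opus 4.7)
The plan is to reduce the continuous-time spectral problem to a classical (discrete-time) Ruelle--Perron--Frobenius problem via an algebraic rearrangement, and then handle the remaining one-parameter search by monotonicity of the topological pressure. By Feynman--Kac (as cited), the semigroup $\{P_T^V\}$ has infinitesimal generator $L+V = \mathcal{L}_A - I + V$, which is a bounded operator on $\mathcal{C}$; hence the semigroup eigenvalue equation $P_T^V F = e^{\lambda T} F$ for all $T \geq 0$ is equivalent to the infinitesimal equation $(L+V)F = \lambda F$, which rearranges to
\begin{equation*}
\mathcal{L}_A(F)(x) = \bigl(1 + \lambda - V(x)\bigr) F(x).
\end{equation*}

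For any $\lambda > \|V\|_\infty - 1$, the function $g_\lambda := 1 + \lambda - V$ is strictly positive and Lipschitz, so $\log g_\lambda$ is Lipschitz. Define the Lipschitz potential $A_\lambda(y) := A(y) - \log g_\lambda(\sigma y)$. A direct computation gives
\begin{equation*}
\mathcal{L}_{A_\lambda}(F)(x) \,=\, \frac{1}{g_\lambda(x)}\,\mathcal{L}_A(F)(x),
\end{equation*}
so the eigenproblem above becomes the classical fixed-point equation $\mathcal{L}_{A_\lambda}(F) = F$. I would then invoke the classical Ruelle--Perron--Frobenius theorem for the Lipschitz potential $A_\lambda$ (see \cite{PP}): it provides a simple maximal positive eigenvalue $\beta(\lambda) := e^{P(A_\lambda)}$, a strictly positive Lipschitz eigenfunction $h_\lambda$, and a dual eigenprobability $\tilde{\nu}_\lambda$ with $\mathcal{L}_{A_\lambda}^* \tilde{\nu}_\lambda = \beta(\lambda)\tilde{\nu}_\lambda$. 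The task is then to find $\lambda = \lambda_V$ with $\beta(\lambda_V) = 1$, i.e., $P(A_{\lambda_V}) = 0$, and set $F_V := h_{\lambda_V}$.

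The existence (and uniqueness) of such $\lambda_V$ rests on the fact that $\lambda \mapsto A_\lambda$ is continuous in $C^0$ and strictly decreasing pointwise, together with continuity and strict monotonicity of the topological pressure under pointwise-dominated perturbations. This makes $\lambda \mapsto P(A_\lambda)$ continuous and strictly decreasing. For the boundary behavior: as $\lambda \to +\infty$, $A_\lambda \to -\infty$ uniformly and the spectral radius tends to $0$, so $P(A_\lambda) \to -\infty$; as $\lambda \searrow \|V\|_\infty - 1$, $A_\lambda$ grows without bound near $\{V = \|V\|_\infty\}$, and testing with a suitable Dirac-like invariant measure concentrated near the maximum of $V$ shows $P(A_\lambda) \to +\infty$. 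The intermediate value theorem then yields a unique $\lambda_V$ with $P(A_{\lambda_V}) = 0$.

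For the dual statement, I would take the eigenprobability $\tilde{\nu} := \tilde{\nu}_{\lambda_V}$ (with eigenvalue $1$) and define
\begin{equation*}
\nu_V := Z^{-1}\, g_{\lambda_V}^{-1}\, \tilde{\nu}, \quad Z := \int g_{\lambda_V}^{-1}\, d\tilde{\nu}.
\end{equation*}
Dualizing the identity $\mathcal{L}_A(f) = g_{\lambda_V}\cdot (\mathcal{L}_{A_{\lambda_V}} f)$ against $\tilde\nu$ gives $\int (L+V)(f)\, d\nu_V = \lambda_V \int f\, d\nu_V$ for every continuous $f$, hence $(P_T^V)^*\nu_V = e^{\lambda_V T}\nu_V$ for all $T \geq 0$. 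Simplicity of $\lambda_V$ and its identification with the spectral radius of $L+V$ (equivalently, of each $P_T^V$) transfer directly from the corresponding classical facts for $\mathcal{L}_{A_{\lambda_V}}$, since the linear bijection $f \leftrightarrow f$ intertwines the two spectra with the shift $\lambda \leftrightarrow \beta(\lambda_V) = 1$. The main obstacle in this program is the verification of the boundary divergence of $P(A_\lambda)$ as $\lambda$ approaches $\|V\|_\infty - 1$; the rest is a routine translation between the discrete-time Ruelle formalism and the continuous-time semigroup.
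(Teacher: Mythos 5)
Your approach is genuinely different from the paper's. The paper establishes the eigenprobability by applying the Schauder--Tychonoff fixed point theorem to a normalized dual operator $G$ on $\mc P(\{1,\dots,d\}^{\bb N})$, and the eigenfunction by applying Schauder--Tychonoff to the operators $Q_T^n$ on a compact convex set $\Lambda$ of functions with controlled Lipschitz-type ratios, then identifying the two eigenvalues via the pairing \eqref{automed}--\eqref{autofuncao}. You instead rewrite the infinitesimal equation $(L+V)F=\lambda F$ as $\mc L_{A_\lambda}(F)=F$ for the coboundary-shifted Lipschitz potential $A_\lambda = A - \log g_\lambda\circ\sigma$, appeal to the classical discrete-time Ruelle--Perron--Frobenius theorem for each fixed $\lambda$, and try to locate $\lambda_V$ by intermediate value on $\lambda \mapsto P(A_\lambda)$. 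The algebraic reduction is correct and elegant; it is precisely the paper's Corollary \ref{V}, but the paper \emph{deduces} that corollary from the theorem, whereas you use it as the engine of the proof. Your construction of the eigenprobability $\nu_V = Z^{-1}g_{\lambda_V}^{-1}\tilde\nu$ also checks out.

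The genuine gap is the boundary step: the claim $P(A_\lambda)\to+\infty$ as $\lambda\searrow \|V\|_\infty-1$, which you propose to prove by ``testing with a suitable Dirac-like invariant measure concentrated near the maximum of $V$.'' This works if $\arg\max V$ contains a periodic point of $\sigma$, since then the equidistribution $\mu$ on that orbit has $\int \log g_\lambda\, d\mu \to -\infty$. But for a generic Lipschitz $V$ on the full shift the maximum is attained only at a non-periodic, non-recurrent point $x^*$, no invariant measure charges $\{x^*\}$, and periodic approximants hitting the cylinder $[x^*_0,\dots,x^*_{k-1}]$ can visit it with frequency at most of order $1/k$ --- which exactly cancels the $-\log\theta^{k}$ gain in $-\log g_\lambda$. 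Carrying out the estimate, the lower bound one obtains from this family of tests stays \emph{finite} in the limit $\lambda\searrow\|V\|_\infty-1$, of the order $\inf A + \log(1/\theta) - \log(\mathrm{osc}(V)+1)$, which can be negative when $\mathrm{osc}(V)$ is large. In fact, by the strict monotonicity of $\lambda\mapsto P(A_\lambda)$, the statement ``$\lim_{\lambda\searrow\|V\|_\infty-1}P(A_\lambda)>0$'' is essentially \emph{equivalent} to the existence of $\lambda_V$ and hence to the theorem itself; so the boundary divergence is not a routine check but is where the content lies. The paper's direct Schauder--Tychonoff construction of $F$ avoids this obstruction entirely. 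As it stands your program does not close at this step, and you correctly identified it as the main obstacle. (A secondary, smaller issue: the ``intertwining'' you invoke for transferring the spectral-radius statement is $\lambda$-dependent --- another eigenvalue $\mu$ of $L+V$ corresponds to the equation $\mc L_{A_\mu}(G)=G$ for a \emph{different} discrete-time operator $\mc L_{A_\mu}$, not to an eigenvalue of $\mc L_{A_{\lambda_V}}$ --- so the maximality claim needs a separate argument.)
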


The proof of this theorem we will present in the Subsections \ref{eigenprobability} and \ref{eigenfunction}.

As a consequence of this theorem we will be able to normalize the  semigroup $\{P^V_T,\,\,T\geq 0\}$
 in order to get another stochastic semigroup, and, then we will finally  obtain what we call the Gibbs state in the continuous time setting.

A quite simple version of this result was presented in \cite{BEL}.
In this paper, $V$ depends just on $X_0$ and the state space  is $\{1,2,\dots,d\}.$

\begin{example} To clarify ideas, we present a simple example where is easy to verify the validity of the above theorem.
 Given $0<p_1<1$,  $0<p_2<1$, the stochastic matrix
$$
\left(
\begin{array}{cc}
1 - p_1  & p_1\\
p_2 & 1- p_2
\end{array}
\right)\,,$$
defines a Ruelle operator $\mathcal{L}_A$ acting on the one-dimensional spin lattice $\{1,2\}^{\bb N}$ such that, $\mathcal{L}_A(1)=1$.
More precisely, $e^{ A(1,1,x_2,\dots)} = 1 - p_1$,  $e^{ A(2,1,x_2,\dots)} = p_1$ and
 $e^{ A(1,2,x_2,\dots)} = p_2$,  $e^{ A(2,2,x_2,\dots)} = 1-p_2$.
Notice that $$
L= \left(
\begin{array}{cc}
1 - p_1  & p_1\\
p_2 & 1- p_2
\end{array}
\right)- \left(
\begin{array}{cc}
1   & 0\\
0 & 1
\end{array}
\right) =\left(
\begin{array}{cc}
- p_1  & p_1\\
p_2 & - p_2
\end{array}
\right)$$
defines a line sum zero matrix.
One can consider a potential $V$ such that is constant in the cylinders of size one, i.e.,
$V(1,x_1,x_2,\dots)=V_1$, and $V(2,x_1,x_2,\dots)=V_2$
In this case $L+ V$ is the matrix
$$\left(
\begin{array}{cc}
- p_1 + V_1 & p_1\\
p_2 & - p_2+V_2
\end{array}
\right)$$
If $\,V_1,V_2$ are positive and large then the positive cone goes inside the positive cone.
Then, there is a positive eigenvalue and a positive eigenfunction.
One can add a constant to $V$ in order to get an eigenvector with just positive entries.
\end{example}

We will consider on the Bernoulli space the usual metric $d$.  Let $0<\theta<1$, then for all $x=\{x_i\},y=\{y_i\}\in\{1,\dots,d\}^{\bb N}$
\begin{equation*}
 d(x,y):= \theta^N,
\end{equation*}
where $N$ is such that $x_i=y_i$, $\forall i\leq N$ and  $x_{N+1}\neq y_{N+1}$.
In the following, when $a\in\{1,\dots,d\}$ and  $x\in\{1,\dots,d\}^{\bb N}$ the notation $ax$
 means $(a,x_1,x_2,\dots)\in\{1,\dots,d\}^{\bb N}$, i.e.,
$ax$ is a preimage of $x$ by shift operator.

We point out that $ d(ax,ay)\leq \theta d(x,y)$, for all $x,y\in\{1,\dots,d\}^{\bb N}$ and $a\in\{1,\dots,d\}$, this is a
central idea in Lemma \ref{lemma1}, when we estimate the ratio $\frac{P_{T}^V (f)(x)}{P_{T}^V (f)(y)}$.
First, we will characterize
the operator $P_{T}^V$, in  Lemma \ref{pv}. This characterization allow us to conclude that the family of operators $\{P^V_T,\,\,T\geq 0\}$ describes
a natural generalization of the discrete time Ruelle operator (see \cite{BEL}).

\begin{lemma}\label{pv} Let $f\in\mc C$, $T\geq 0$, and $x\in \{1,\dots,d\}^{\bb N}$.
Consequently, $P_{T}^V (f)(x)\,=\, \bb E_{x} \big[e^{\int_0^{T} V(X_r)\,dr} f(X_{T})\big]$ can be rewritten as
 \begin{equation*}
\begin{split}
e^{T V(x)}
 f(x)e^{-T}\,+\,
\sum_{n=1}^{+\infty} \sum_{a_1=1}^{d}\dots \sum_{a_n=1}^{d}
e^{A(a_1x)}\dots e^{A(a_n\dots a_1x)}
f(a_n\dots a_1x)\,  \mc I_V^T(a_n\dots a_1 x)\,,\\
\end{split}
\end{equation*}
where
 \begin{equation*}
\begin{split}
\mc I_V^T(a_n\dots a_1 x)\!=\!\int_0^\infty \!\!\!\!\!dt_{n}\!\dots \!\int_0^\infty\!\! \!\!\!dt_{0}\,
e^{t_0 V(x)+\dots+(T-\sum_{i=0}^{n-1}t_i) V(a_n\dots a_1x)}
\textbf{1}_{[\sum_{i=0}^{n-1}t_i\leq T< \sum_{i=0}^{n}t_i]}e^{-t_0}\dots e^{-t_n}\,.
\end{split}
\end{equation*}
\end{lemma}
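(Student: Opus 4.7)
The plan is to condition on the number $N_T$ of jumps performed by the process $\{X_t\}_{t\geq 0}$ on the interval $[0,T]$ and to use the explicit jump-and-holding-times description of the dynamics generated by $L=\mc L_A-I$. Because $A$ is normalized, starting from $x$ the particle waits an exponential time $t_0$ of parameter $1$, then jumps to a preimage $a_1x$ (with $\sigma(a_1x)=x$) with probability $e^{A(a_1x)}$, then waits an independent exponential time $t_1$ of parameter $1$ and jumps to a preimage $a_2a_1x$ of $a_1x$ with probability $e^{A(a_2a_1x)}$, and so on. On the event $\{N_T=n\}=\{\sum_{i=0}^{n-1}t_i\leq T<\sum_{i=0}^{n}t_i\}$, the trajectory $r\mapsto X_r$ on $[0,T]$ is piecewise constant and takes the successive values $x,\,a_1x,\,\dots,\,a_n\cdots a_1x$, so that $X_T=a_n\cdots a_1x$ and
\begin{equation*}
\int_0^T V(X_r)\,dr \,=\, t_0\,V(x)+\sum_{i=1}^{n-1}t_i\,V(a_i\cdots a_1x)+\bigl(T-\textstyle\sum_{i=0}^{n-1}t_i\bigr)\,V(a_n\cdots a_1x).
\end{equation*}

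Second, I would split the expectation according to $N_T$,
\begin{equation*}
P_T^V(f)(x)\,=\,\sum_{n=0}^{\infty}\bb E_x\bigl[e^{\int_0^T V(X_r)\,dr}f(X_T)\,\mathbf{1}_{\{N_T=n\}}\bigr],
\end{equation*}
and, for each $n\geq 1$, insert the joint law of the marked point process: the holding times $(t_0,\dots,t_n)$ are i.i.d.\ exponential of parameter $1$ and contribute the density $e^{-t_0}\cdots e^{-t_n}$, while, conditionally on these, the choice of preimage sequence $(a_1,\dots,a_n)\in\{1,\dots,d\}^n$ has probability $\prod_{k=1}^{n} e^{A(a_k\cdots a_1x)}$. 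Summing over $(a_1,\dots,a_n)$ and integrating $(t_0,\dots,t_n)$ against the indicator of $\{N_T=n\}$ yields exactly the general term of the claimed formula. For $n=0$ the contribution is simply $e^{TV(x)}f(x)\,\bb P_x(N_T=0)=e^{TV(x)}f(x)\,e^{-T}$, which matches the leading term.

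The only point that requires care is the interchange of expectation with the infinite series, which I would justify by dominated convergence: since $\|V\|_\infty$ and $\|f\|_\infty$ are finite and the total jump rate is identically $1$, $N_T$ is Poisson of parameter $T$ under $\bb P_x$, so the $n$-th term is bounded in absolute value by $e^{T\|V\|_\infty}\|f\|_\infty e^{-T}T^n/n!$, and the series converges absolutely. I do not expect any genuine obstacle beyond this bookkeeping; the lemma is really just the pure-jump Dyson--Duhamel expansion of the Feynman--Kac semigroup $e^{T(L+V)}$ made explicit via the a priori process.
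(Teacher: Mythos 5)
Your proposal is correct and uses essentially the same approach as the paper's proof: decomposing $P_T^V(f)(x)$ by conditioning on the number of jumps up to time $T$, invoking the skeleton chain with transition probabilities $\mathbf{1}_{[\sigma(y)=x]}e^{A(y)}$ and i.i.d.\ exponential(1) holding times, and writing $\int_0^T V(X_r)\,dr$ as a telescoping sum over the holding intervals. The paper's version does not bother to make the dominated-convergence justification explicit, but otherwise the two arguments coincide.
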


As the proof of this lemma is very technical
we present it in Appendix \ref{provalema}.

Observe that, if one consider $V\equiv 0$, the previous lemma says that
\begin{equation*}
\begin{split}
 &P_{T}(f)(x)\,=\, \bb E_{x} \big[ f(X_{T})\big]\\&=
 e^{-T}\Big\{f(x)\,+\,
\sum_{n=1}^{+\infty} \frac{T^n}{n!}\sum_{a_1=1}^{d}\dots \sum_{a_n=1}^{d}
e^{A(a_1x)}\dots\, e^{A(a_n\dots a_1x)}
f(a_n\dots a_1x)\Big\}\\&=
 e^{-T}\Big\{f(x)\,+\,
\sum_{n=1}^{+\infty} \frac{T^n}{n!}(\mc L^n_A(f))(x)\Big\}\,,
\end{split}
\end{equation*}
because
\begin{equation*}
 \int_0^\infty \!\!\!\!dt_{n} \dots \int_0^\infty\!\! \!\!dt_{0}\,\,
\textbf{1}_{[\sum_{i=0}^{n-1}t_i\leq T< \sum_{i=0}^{n}t_i]}e^{-t_0}\dots e^{-t_n}\,=\,e^{-T}\,\frac{T^n}{n!}\,.
\end{equation*}

Thus,
$P_{T}(f)(x)\,= \frac{1}{e^T} e^{T \mathcal {L}_A}(f)(x)$, which is in accordance with the fact that
$\{P_T,\, T\geq 0\}$ is the semigroup associated to the generator $L=\mc L_A-I$.

\begin{lemma}\label{auxlema}
For any non-negative continuous function $f$ such that there exist $x\in \{1,\dots,d\}^{\bb N}$ and $T>0$ satisfying $P_{T}(f)(x)=0$,
we have that $f\equiv 0$.
\end{lemma}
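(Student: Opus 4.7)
The plan is to exploit the explicit series representation of $P_T(f)(x)$ derived immediately before the lemma. Setting $V\equiv 0$ in that computation gives
\begin{equation*}
P_T(f)(x)\,=\,e^{-T}\Big\{f(x)\,+\,\sum_{n=1}^{+\infty}\frac{T^n}{n!}(\mc L_A^n f)(x)\Big\}.
\end{equation*}
Since $f\geq 0$, $T>0$ and the Ruelle operator $\mc L_A$ preserves non-negativity (the weights $e^{A(y)}$ being strictly positive), every summand on the right-hand side is non-negative. Hence $P_T(f)(x)=0$ forces
\begin{equation*}
f(x)=0\qquad\text{and}\qquad (\mc L_A^n f)(x)=0\ \text{ for every }n\geq 1.
\end{equation*}

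Next I would unfold $(\mc L_A^n f)(x)=\sum_{\sigma^n(y)=x} e^{A(y)+A(\sigma y)+\dots+A(\sigma^{n-1}y)}\,f(y)$. Again each weight is strictly positive and $f\geq 0$, so the vanishing of this sum implies $f(y)=0$ for every $y$ in the total pre-orbit $\mathcal{O}(x):=\{y:\sigma^n(y)=x\text{ for some }n\geq 0\}$.

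The final step is to observe that $\mathcal{O}(x)$ is dense in $\{1,\dots,d\}^{\bb N}$: given any cylinder $[a_1,\dots,a_k]$, the point $y=(a_1,\dots,a_k,x_1,x_2,\dots)$ satisfies $\sigma^k(y)=x$ and lies in the cylinder. Since $f$ is continuous on $\{1,\dots,d\}^{\bb N}$ and vanishes on a dense subset, we conclude $f\equiv 0$.

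There is no real obstacle here; the argument is a direct consequence of the series expansion combined with the density of the pre-orbit under the shift. The only subtle point is making sure the term-by-term vanishing argument is legitimate, which it is because all terms are non-negative and the series converges absolutely (being bounded by $e^T\,\|f\|_\infty$ once the factor $e^{-T}$ is restored).
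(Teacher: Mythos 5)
Your argument is correct and is essentially the paper's own: read off the explicit series from Lemma~\ref{pv}, note that every weight is strictly positive when $T>0$, conclude that $f$ vanishes on the total $\sigma$-pre-orbit of $x$, and finish by density of that pre-orbit plus continuity of $f$. One small caution: although the statement is written with $P_T$, the paper invokes this lemma for the perturbed semigroup $P_T^V$ (to show $F_T>0$ and that the eigenvalue is simple), so you should either state it for $P_T^V$ or observe that your term-by-term positivity argument carries over verbatim because the kernel $\mc I_V^T(a_n\dots a_1x)$ in Lemma~\ref{pv} is strictly positive for every $T>0$, not just for $V\equiv 0$.
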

\begin{proof}
 By the Lemma \ref{pv}, $f(a_n,\dots,a_1x)=0$, for all $a_i\in\{1,2,\dots, d\}$, $i=1,\dots,n$, for any $n\in \bb N$.
 Then $f(z)=0$, for any $z\in \{y;$ there exists $n$ such that $\sigma^n(y)=x\}$.
 But this set is dense in $\{1,\dots,d\}^{\bb N}$ and $f$ is continuous, thus $f(z)=0$, for any $z\in \{1,\dots,d\}^{\bb N}$.
\end{proof}

\begin{lemma}\label{lemma1} If the function $f$ satisfies
$f(x)\leq e^{C_f d(x,y)} f(y)$, for all  $x,y\in\{1,\dots,d\}^{\bb N}$, where $C_f$ is a constant depending only on $f$,
 then
 \begin{equation*}
\begin{split}
P_{T}^V (f)(x)
\,\leq\, \exp\Big\{\big[(C_A\theta+TC_V)(1-\theta)^{-1}+C_f\theta\big]\,\mbox{d}(x,y)\Big\}\,P_{T}^V (f)(y)\,,
\end{split}
\end{equation*}
for all $T\geq 0$.
\end{lemma}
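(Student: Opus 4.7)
The plan is to use the explicit series expansion of $P_T^V(f)(x)$ provided by Lemma \ref{pv} and compare it term-by-term with the same expansion for $y$. For each $n\geq 0$ and each word $(a_1,\ldots,a_n)$, the two summands differ only by prepending identical symbols to $x$ or $y$, so I will repeatedly exploit the elementary contraction $d(a_i\cdots a_1 x,\, a_i\cdots a_1 y) \leq \theta^i d(x,y)$, valid because the first $i$ coordinates now agree. The objective is to extract three independent ratio bounds---one for the product of $e^{A(\cdot)}$-factors, one for $f(a_n\cdots a_1\,\cdot\,)$, and one for the integral $\mc I_V^T(a_n\cdots a_1\,\cdot\,)$---each an exponential in $d(x,y)$ with a constant uniform in $n$ and in $(a_1,\ldots,a_n)$.

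For the $A$-product, Lipschitz continuity gives $|A(a_i\cdots a_1 x)-A(a_i\cdots a_1 y)|\leq C_A\theta^i d(x,y)$, and summing the geometric series $\sum_{i=1}^n \theta^i \leq \theta(1-\theta)^{-1}$ yields the bound $\exp\{C_A\theta(1-\theta)^{-1}\,d(x,y)\}$ on the ratio of products, uniformly in $n$. The hypothesis on $f$ together with the depth-$n$ contraction gives $f(a_n\cdots a_1 x)\leq \exp\{C_f\theta^n\,d(x,y)\} f(a_n\cdots a_1 y)$, which contributes at most $\exp\{C_f\theta\, d(x,y)\}$. For the integral $\mc I_V^T$, the only $x$-dependent piece of the integrand is the exponential $\exp\{t_0 V(x) + t_1 V(a_1 x) + \cdots + (T-\sum_{i<n}t_i)V(a_n\cdots a_1 x)\}$; applying Lipschitz continuity of $V$ term by term bounds the absolute difference between the $x$- and $y$-exponents by $C_V d(x,y)\bigl(t_0 + \theta t_1 + \cdots + \theta^n(T-\sum_{i<n}t_i)\bigr)$, which on the support of the indicator is at most $TC_V d(x,y)$ (since each coefficient lies in $[0,1]$ and the weights add to $T$), producing the factor $\exp\{TC_V d(x,y)\}$.

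Multiplying the three estimates and using the trivial inequality $C_A\theta(1-\theta)^{-1}+TC_V\leq (C_A\theta+TC_V)(1-\theta)^{-1}$, every summand of the series defining $P_T^V(f)(x)$ is dominated by the corresponding summand for $y$ times the single factor $\exp\{[(C_A\theta+TC_V)(1-\theta)^{-1}+C_f\theta]\,d(x,y)\}$; since this factor depends neither on $n$ nor on the word, it pulls out of the sum and gives the claim. The main obstacle in the argument is the $V$-contribution inside $\mc I_V^T$: the $V$-values are coupled to the integration variables $t_0,\ldots,t_n$, and one must verify that the Lipschitz-bounded differences, weighted by the $t_i$'s with coefficients $\theta^i$, can still be collapsed to a bound linear in $T$. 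The key observation that saves the day is that on the support of the indicator $\mathbf{1}_{[\sum_{i=0}^{n-1}t_i\leq T<\sum_{i=0}^{n}t_i]}$ the relevant time weights sum exactly to $T$, so the worst case gives precisely $TC_V d(x,y)$, uniformly in $n$.
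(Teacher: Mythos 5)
Your argument follows the paper's proof in Appendix \ref{provalema} almost exactly: expand $P_T^V(f)$ via the series of Lemma \ref{pv}, compare the $x$- and $y$-series term by term using the contraction $d(a_i\cdots a_1 x,\,a_i\cdots a_1 y)\leq\theta^i d(x,y)$, and control the three factors (the $A$-product, the value of $f$, and $\mc I_V^T$) separately by exponentials uniform in $n$ and in the word. The single genuine difference is the $\mc I_V^T$ estimate: the paper bounds each time weight individually by $T$ and then sums the geometric series $\sum_i\theta^i\leq(1-\theta)^{-1}$, obtaining $\exp\{TC_V(1-\theta)^{-1}d(x,y)\}$; you instead bound $\theta^i\leq 1$ and observe that on the indicator's support the weights sum to exactly $T$, obtaining the sharper $\exp\{TC_Vd(x,y)\}$ before absorbing the discrepancy via $TC_V\leq TC_V(1-\theta)^{-1}$. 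Either route yields the stated constant.

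One wrinkle to be aware of, which you share with the paper: your step $\exp\{C_f\theta^n d(x,y)\}\leq\exp\{C_f\theta d(x,y)\}$ requires $n\geq 1$. For the $n=0$ summand $e^{TV(x)}f(x)e^{-T}$ the $f$-factor contributes $\exp\{C_f d(x,y)\}$, not $\exp\{C_f\theta d(x,y)\}$. The paper's Appendix \ref{provalema} silently omits the $f(x)$ from the $n=0$ summand (a typo), which is how this goes unnoticed there. Read strictly, the lemma's constant would have to be $(C_A\theta+TC_V)(1-\theta)^{-1}+C_f$ rather than $+C_f\theta$; one sees this concretely at $T=0$, where $P_0^V(f)=f$ and the hypothesis only gives the exponent $C_f d(x,y)$, which need not be dominated by $[C_A\theta(1-\theta)^{-1}+C_f\theta]\,d(x,y)$. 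This does not change the structure of either proof, but it does affect the arithmetic where the lemma is invoked in Subsection \ref{eigenfunction} with $C_f=(C_A+C_V)/(1-\theta)$ and $T\leq\theta$, so it is worth handling the $n=0$ term explicitly rather than folding it into the $\theta^n\leq\theta$ bound.
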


The proof of this lemma is in Appendix \ref{provalema} (it  is  similar to the proof of the Lemma \ref{pv}).

\subsection{Eigenprobability}\label{eigenprobability}

In this subsection we will present the proof of existence of eigenprobability.
Without loss of generality, we will assume that the perturbation $V$ is  positive and its minimum is large enough (just add a large constant to the initial $V$).
We will find an eigenprobability for $\mathcal{L}_A - I  + V$. The constant we eventually add to the in initial potential will not harm our argument.

First we need to analyze the dual of $\mathcal{L}_A - I  + V$ acting on signed measures.

As we know $ (\mathcal{L}_A - I  + V)^*$ acts on measures on the Bernoulli space via the expression: given $\nu$, then
\begin{equation*}
\big<f,(\mathcal{L}_A - I  + V)^*(\nu)\big>= \big<(\mathcal{L}_A - I  + V)(f),\nu\big>\,,
\end{equation*}
for any $f\in\mc C$. This leads us to consider the operator $G$  on probabilities of the one-dimensional spin lattice.
Given $\nu$ probability on $\{1,\dots,d\}^{\bb N}$, $G$  acts on $\nu$ as
\begin{equation*}
\big<f,G(\nu)\big>=\frac{ \big<(\mathcal{L}_A - I  + V)(f),\nu\big>}{\big<(\mathcal{L}_A - I  + V)(1),\nu\big>}=\frac{ \big<(\mathcal{L}_A - I  + V)(f),\nu\big>}{\big<V,\nu\big>}\,,
\end{equation*}
for any $f\in\mc C$. The function $G$ is well defined by the hypothesis on $V$.
This $G$ is continuous, because it is the ratio of two continuous functions.
From  Schauder-Tychonoff Theorem, we get the existence of a fixed point probability $\nu_V$ for $G$.
Therefore, there exists $\lambda_V=\int V\, \mbox{d}\nu_V$ such that
\begin{equation*}
\int (\mathcal{L}_A - I  + V)(f)\,\mbox{d}\nu_V=\big<(\mathcal{L}_A - I  + V)(f),\nu_V\big> =\lambda_V\, \big<f,\nu_V\big>=
\lambda_V \int f\,\mbox{d} \nu_V\,,
\end{equation*}
for any $f\in\mc C$.
Since $L=\mc L_A -I$, we have
\begin{equation}\label{int0}
\int (L  + V-\lambda_V)(f)\,\mbox{d}\nu_V=0\,,
\end{equation}
for any $f\in\mc C$. By Feynman-Kac, the semigroup associated to operator $L  + V-\lambda_V$ is $\frac{P_T^V}{e^{\lambda_VT}}$.
Using the Trotter-Kato Theorem (see chapter IX section 12 in \cite{Y}), we get
$$\frac{P_T^V(f)}{e^{\lambda_VT}}=\lim_{n\to\infty}\Big(I-\frac{T}{n}(L+ V-\lambda_V)\Big)^n(f)\,.$$
Observe that is true
\begin{equation*}
\int \Big(I-\pfrac{T}{n}(L+V-\lambda_V)\Big)^n(f) \,\mbox{d}\nu_V = \int f \,\mbox{d}\nu_V\,,\quad\forall n,
\end{equation*}
and, this is a consequence  of two properties: the first one is that when the operator $L+ V-\lambda_V$
acts on $\mc C$ its image is contained $\mc C$ too; the second one is the equality \eqref{int0}. By Dominated Convergence Theorem, we get
\begin{equation}\label{automed}
\int \frac{P_T^V(f)}{e^{\lambda_VT}} \,\mbox{d}\nu_V\,=\, \int f \,\mbox{d}\nu_V\,,
\end{equation}
for any $f\in\mc C$. Consequently,
\begin{equation*}
\int f\,\mbox{d}\big[(P_T^V)^*(\nu_V)\big] \,= \,e^{\lambda_VT}\int f \,\mbox{d}\nu_V\,,
\end{equation*}
for any $f\in\mc C$.

\subsection{Eigenfunction}\label{eigenfunction}
Here, we present the existence of an eigenprobability.

Suppose that $\theta\leq 1/2$. Let
\begin{equation*}
 \Lambda=\big\{f\in \mc C;\,\,0\leq f\leq 1\,\,\mbox{and}\,\,f(x)\leq \exp\{\pfrac{C_A+C_V}{1-\theta} d(x,y)\} f(y),
 \,\forall x,y\in\{1,\dots,d\}^{\bb N}\big\}\,.
\end{equation*}
The set $ \Lambda$ is convex,
because for all $f,g\in\Lambda$ and $t\in(0,1)$
\begin{equation*}
 tf(x)+(1-t)g(x)\leq
\exp\{\pfrac{C_A+C_V}{1-\theta} d(x,y)\}\big(tf(y)+(1-t)g(y)\big)\,.
\end{equation*}
Let $\{f_n\}\subset\Lambda$, then $\Vert f_n\Vert_\infty\leq 1$ and
\begin{equation*}
\begin{split}
  \vert f_n(x)-f_n(y)\vert \,\leq\, &\Vert f_n\Vert_\infty \Big(  \exp\{\pfrac{C_A+C_V}{1-\theta} d(x,y)\} -1\Big)\\
\,\leq\, & \pfrac{C_A+C_V}{1-\theta} d(x,y) \exp\{\pfrac{C_A+C_V}{1-\theta} \}\,,
\end{split}
\end{equation*}
for all $n\in\bb N$. By Arzel\`a-Ascoli Theorem,  the sequence  $\{f_n\}$ has a limit point. Therefore, $\Lambda$ is a compact set.

\bigskip

 By the Lemma \ref{lemma1}, for all  $f\in \Lambda$, we have
 \begin{equation*}
\begin{split}
P_{T}^V (f)(x)
\,\leq\, \exp\Big\{\big[\pfrac{C_A\theta+TC_V}{1-\theta}+\pfrac{C_A+C_V}{1-\theta} \theta\big]\,\mbox{d}(x,y)\Big\}\,P_{T}^V (f)(y)\,,\quad \forall T\geq 0\,.
\end{split}
\end{equation*}
Take $T\leq \theta$, then
 \begin{equation*}
\begin{split}
P_{T}^V (f)(x)
\,\leq\, \exp\Big\{2\pfrac{C_A+C_V}{1-\theta} \theta\,\mbox{d}(x,y)\Big\}\,P_{T}^V (f)(y)
\,\leq\, \exp\Big\{\pfrac{C_A+C_V}{1-\theta} \,\mbox{d}(x,y)\Big\}\,P_{T}^V (f)(y)\,.
\end{split}
\end{equation*}
The last inequality is due to the assumption about $\theta$. Unfortunately, $P_{T}^V (f)$ can be greater than one, then we need to define
for all  $n\in \bb N$, the  operator $Q_T^n$ that acts on  $ g\in\Lambda$ as
\begin{equation*}
 Q_T^n(g):=\frac{P_{T}^V (g+1/n)}{\big\Vert P_{T}^V (g+1/n)\big\Vert_\infty}\,.
\end{equation*}
Notice that, for all $n\in \bb N$, the function  constant equal to $1/n$ belongs to $ \Lambda$, then
$$P_{T}^V (1/n)(x) \leq \exp\{\pfrac{C_A+C_V}{1-\theta} \,\mbox{d}(x,y)\}P_{T}^V (1/n)(y)\,,$$
for all $ T\in[0,\theta]$.
This allows us to show that $ Q_T^n:\Lambda\to\Lambda$, for all $n\in\bb N$.

Since $ \Lambda$ is convex and a compact set, we can apply the Schauder-Tychonoff Fixed Point Theorem to each $ Q_T^n:\Lambda\to\Lambda$
and see that there exists  $h_n^T\in \Lambda$ such that
\begin{equation}\label{hn}
\frac{P_{T}^V (h_n^T+1/n)}{\big\Vert P_{T}^V (h_n^T+1/n)\big\Vert_\infty}=h_n^T\,,\,\,\,\,\forall n\,,\,\,\,\,\forall  T\in[0,\theta]\,.
\end{equation}

Now, for fixed $ T\in[0,\theta]$, there exists $F_T\in \Lambda$ a limit point of the sequence  $\{h_n^T\}_n\subset\Lambda$, because $\Lambda$ is compact.
By the continuity of the operator $P_{T}^V $, the expression above becomes
\begin{equation}\label{autofuncao}
P_{T}^V (F_T)=\big\Vert P_{T}^V (F_T)\big\Vert_\infty\,F_T\,,\,\,\,\,\forall T\in[0,\theta]\,.
\end{equation}

First of all, we would like to prove that $F_T>0$. Hence, we begin to analyze the norm $\big\Vert P_{T}^V (F_T)\big\Vert_\infty$.
 By the equation \eqref{hn}, we have
\begin{equation*}
\big\Vert P_{T}^V (h_n^T+1/n)\big\Vert_\infty\,h_n^T(x)= \bb E_{x} \big[e^{\int_0^{T} V(X_r)\,dr} (h_n^T+1/n)(X_{T})\big]
\geq \Big [\big(\inf \, h_n^T\big)+1/n\Big]\,e^{-T\Vert V\Vert_\infty}\,,
\end{equation*}
for all $x$.
Then,
\begin{equation*}
\Big(\big\Vert P_{T}^V (h_n^T+1/n)\big\Vert_\infty-e^{-T\Vert V\Vert_\infty}\Big)\,
\inf \, h_n^T\,
\geq\, (1/n) \,e^{-T\Vert V\Vert_\infty}\,>\,0\,.
\end{equation*}
This implies that
\begin{equation*}
\big\Vert P_{T}^V (h_n^T+1/n)\big\Vert_\infty\,>\,e^{-T\Vert V\Vert_\infty}\,,\quad\forall n\,.
\end{equation*}
Recalling that $F_T$ is a limit point of $\{h_n^T\}_n$, the last inequality is transformed in
\begin{equation}\label{norma}
\big\Vert P_{T}^V (F_T)\big\Vert_\infty\,\geq\,e^{-T\Vert V\Vert_\infty}\,.
\end{equation}

Finally, suppose that $F_T(x_0)=0$, for some $x_0\in\{1,\dots,d\}^{\bb N}$. Due to the fact that $F_T$ is eigenfunction of the operator $P_T^V$, we have $P_T^V(F_T)(x_0)=0$.
Using the Lemma \ref{auxlema}, we get that $F_T\equiv 0$. But it is a contraction in relation to \eqref{norma}, because $P_T^V$ is linear.
As a result $F_T> 0$.

\bigskip

Now, we will characterize the eigenvalue, in order to do this we use the eigenprobability $\nu_V$.
The equations \eqref{autofuncao} and \eqref{automed} together imply that
\begin{equation*}
\big\Vert P_{T}^V (F_T)\big\Vert_\infty\int F_T \,\mbox{d}\nu_V\,=\,\int P_T^V(F_T)\,\mbox{d}\nu_V \,= \,e^{\lambda_VT}\int F_T \,\mbox{d}\nu_V\,.
\end{equation*}
Since $F_T\geq 0$, we get $\Vert P_{T}^V (F_T)\Vert_\infty=e^{\lambda_VT}$,
and using \eqref{autofuncao} one can conclude $ P_{T}^V (F_T)= e^{\lambda_VT}\,F_T$, $\forall  T\in[0,\theta]$.

\bigskip

The next step is to prove that $e^{\lambda_VT}$ is a simple eigenvalue for $ P_{T}^V$. We suppose that for each $  T\in[0,\theta]$
there exists $G_T$ such that
$ P_{T}^V (G_T)= e^{\lambda_VT}\,G_T$.
 Define
$\alpha_0^T:=\inf_{x}\frac{G_T(x)}{F_T(x)}$. Since the Bernoulli space is compact, there exist $x_0\in\{1,\dots,d\}^{\bb N}$ such that
$G_T(x_0)-\alpha_0^T F_T(x_0)=0$. Observe that $H_T:=G_T(x)-\alpha_0^T F_T(x)$ is a non-negative eigenfunction of $P_T^V$. Then
$P_T^V(H_T)(x_0)=0$. By Lemma \ref{auxlema}, $H_T\equiv 0$. Thus, $G_T$ is a scalar multiple of $F_T$. This shows that
$e^{\lambda_VT}$ is a simple eigenvalue.

\bigskip

We will try to eliminate the dependence on $T\in[0,\theta]$ in the functions $F_T$. Recall that $\theta\leq 1/2$.
Let $n_0:=\min\{n;\;2^{-n}\leq \theta\}$. Denote by $F:= F_{2^{-n_0}}$.
We claim that $ P_{2^{-n}}^V (F)= e^{\lambda_V2^{-n}}\,F$, $\forall n\geq n_0$.
To prove this note that  by the semigroup property we have that $P_{2^{-n_0}}^V (F_{2^{-n}})$ can be rewritten as $P_{2^{-n}}^V \dots P_{2^{-n}}^V(F_{2^{-n}})$, $\forall n\geq n_0$.
Applying $2^{n-n_0}$ times the fact that $F_{2^{-n}}$ is eigenfunction of the operator $P_{2^{-n}}^V$, we have
$ P_{2^{-n_0}}^V (F_{2^{-n}})
\,=\,e^{\lambda_V2^{-n_0}}\,F_{2^{-n}}$, $\forall n\geq n_0$.
 Since $e^{\lambda_V2^{-n_0}}$ is simple eigenvalue to the operator $P_{2^{-n_0}}^V$, we get $F_{2^{-n}}=F$, $\forall n\geq n_0$.
This finishes the claim.

\bigskip

The last claim and  the fact that the semigroup $\{P_T^V,\,T\geq 0 \}$ is associated to the operator $L+V$  imply that
\begin{equation*}
 (L+V)(F)=\lim_{n\to\infty}\frac{P_{2^{-n}}^V(F)-F}{2^{-n}}=\lim_{n\to\infty}\frac{e^{\lambda_V2^{-n}}-1}{2^{-n}}F=\lambda_VF\,.
\end{equation*}
Since the operator $L+V=\mc L_A-I+V$ is a bounded operator, using the equality above we get
\begin{equation*}
 P_T^V(F)(x)=e^{T(L+V)}(F)(x)=\sum_{n=0}^\infty \frac{T^n}{n!}(L+V)^n(F)(x)=
\sum_{n=0}^\infty \frac{T^n}{n!}\lambda_V^nF(x)=e^{\lambda_V T}F(x)\,,
\end{equation*}
for any $T\geq 0$.

\bigskip

Therefore, with these final considerations, we finished the proof of one of our main results, which is  Theorem \ref{prop2} (Perron-Frobenius).
Notice that  $\lambda_V$ is both eigenvalue  for the eigenfunction
(see Section \ref{eigenfunction}) and also  eigenvalue for the dual operator (section \ref{eigenprobability}).

In terms of discrete time dynamics we just showed the following result:

\begin{corollary}\label{V}  Given a normalized Lipschitz potential $A: \{1,\dots,d\}^{\bb N}\to \mathbb{R}$  and a Lipschitz function
$V: \{1,\dots,d\}^{\bb N}\to \mathbb{R}$, there exists Lipschitz function $F =F_V:\{1,\dots,d\}^{\bb N}\to \mathbb{R}$  and $\lambda=\lambda_V$ such that, for all $x\in \{1,\dots,d\}^{\bb N}$
\begin{equation}\label{equation}
 \frac{\mc L_A (F)(x)}{F(x)}=\sum_{\sigma(y)=x} \, \frac{e^{A(y)}\, F(y)}{F(x)} = 1- V(x)+\lambda\,.
\end{equation}
\end{corollary}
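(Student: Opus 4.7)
The plan is to obtain the corollary as an immediate algebraic reformulation of the Continuous Time Perron--Frobenius Theorem (Theorem \ref{prop2}), which has just been established. Nothing new needs to be proved at the level of dynamics or functional analysis; the entire content is to rewrite the eigenvalue identity in the form demanded by the statement.

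First, I would invoke Theorem \ref{prop2} to produce a strictly positive Lipschitz function $F = F_V$ and a real number $\lambda_V$ such that $P_T^V(F) = e^{\lambda_V T} F$ for every $T \geq 0$. The Lipschitz regularity is not an afterthought: by construction $F$ lies in the compact convex set $\Lambda$ of functions with a uniform Hölder/Lipschitz constant $(C_A + C_V)/(1-\theta)$, so this control is inherited automatically.

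Second, I would pass from the semigroup identity to its infinitesimal version. The computation carried out at the end of the proof of Theorem \ref{prop2}, using that $L + V = \mc L_A - I + V$ is a bounded operator and that the semigroup $\{P_T^V\}_{T \geq 0}$ is generated by $L+V$ via Feynman--Kac, gives
\begin{equation*}
(L+V)(F)(x) \;=\; \lim_{n \to \infty} \frac{P_{2^{-n}}^V(F)(x) - F(x)}{2^{-n}} \;=\; \lambda_V F(x).
\end{equation*}
Substituting $L = \mc L_A - I$ this becomes $\mc L_A(F)(x) - F(x) + V(x) F(x) = \lambda_V F(x)$.

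Third, since $F$ is strictly positive on the compact space $\{1,\ldots,d\}^{\mathbb N}$, I can divide through by $F(x)$ to obtain
\begin{equation*}
\frac{\mc L_A(F)(x)}{F(x)} \;=\; \sum_{\sigma(y)=x} \frac{e^{A(y)} F(y)}{F(x)} \;=\; 1 - V(x) + \lambda_V,
\end{equation*}
which is exactly the identity in \eqref{equation}. There is no real obstacle here: all analytical difficulties (existence of the eigenfunction, strict positivity, simplicity of $\lambda_V$, Lipschitz regularity) were dispatched in the proof of Theorem \ref{prop2}, and the corollary is essentially a change of viewpoint from continuous-time semigroup language to discrete-time Ruelle operator language.
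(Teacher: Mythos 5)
Your proposal follows exactly the route the paper takes: the corollary is presented immediately after Theorem \ref{prop2} with only the remark "In terms of discrete time dynamics we just showed the following result", because the identity $(L+V)(F)=\lambda_V F$ was already derived explicitly at the end of the proof of that theorem via the limit $\lim_{n\to\infty}\frac{P_{2^{-n}}^V(F)-F}{2^{-n}}$. Dividing by the strictly positive Lipschitz $F$ and writing $L=\mc L_A - I$ is precisely the algebraic rearrangement the paper intends, so the proposal is correct and essentially identical to the paper's argument.
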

Notice that the addition of  a constant to $V$   produces an additive change in  the eventual eigenvalue $\lambda$.

\section{ The continuous time Gibbs state for $V$}

From the Perron-Frobenius Theorem associated to $V$, we can define a new continuous time Markov chain which will be the Gibbs state for $V$.
Remember that $L+V={\mc L}_A -I+V$ generates the semigroup $\{P_{T}^V, \,T\geq 0\}$.

For $T\geq 0$, if one defines
\begin{equation}\label{qv}
 \mc P^{V}_T (f) (x)\,=\,\bb E_x\Big[ e^{\int_0^{T}\, V(X_r) dr}\, \pfrac{F(X_T)}{e^{{\lambda_V}  T}\, F(x)}\, f( X_T)\Big]=\frac{P^{V}_{T} (Ff) (x)}{e^{{\lambda_V}  T}\, F(x)}\,,
\end{equation}
where $F$ and $\lambda_V$ are the eigenfunction and the eigenvalue, respectivelly.
Then $\mc P^{V}_T( 1)(x) =1$, $\forall x \in\{1,\dots,d\}^{\bb N}$.
This will define the stochastic semigroup we were looking for. From this we will get a new continuous time Markov chain which will help
to define the Gibbs state for $V$.

We point out that $\frac{\mathcal{L}_A (F)}{F}(y) = 1-V(y) + \lambda_{V}=\gamma_V(x)>c>0$, for some positive $c$.
We can say that because $F$ and $\mathcal{L}_A (F)$ are  continuous strictly positive functions and the state space is compact.

From the above, it is natural to consider a new normalized Lipschitz potential $B_V$ and a function $\gamma_{V}$ defined   by
\begin{equation}\label{defs}
 \begin{split}
&  B_V(y):=\,A(y)-\log{(1-V(\sigma(y))+\lambda_{V})}+\log{F(y)}-\log{F(\sigma(y))}\,,\quad\forall y\in \{1,\dots,d\}^{\bb N}\\
&\mbox{and}\qquad\gamma_{V}(x):=1- V(x)+\lambda_{V}\,,\quad\forall x\in \{1,\dots,d\}^{\bb N}\,,\\
 \end{split}
\end{equation}
where $V$, $F$ and $\lambda_{V}$ were introduced before.

\begin{proposition} \label{LV} If  $V$ is a   Lipschitz function  we define the operator  $L^{V}$ acting on bounded mensurable
 functions $f:\{1,\dots,d\}^{\bb N}\to \bb R$ as
\begin{equation}\label{gerVF1}
 L^{V}(f)(x)=\gamma_{V}(x)\, \sum_{\sigma(y)=x}e^{B_{V}(y)}\big[f(y)-f(x)\big]\,,
\end{equation}
where $B_{V}(y)$ and  $\gamma_{V}$ are defined in \eqref{defs}. Then, this operator, $L^V$
 is the infinitesimal generator associated to a semigroup $\{\mc P^{V}_T,\,T\geq 0\}$ defined in
\eqref{qv}.
\end{proposition}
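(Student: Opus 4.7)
The plan is to compute the infinitesimal generator of the family $\{\mathcal{P}^V_T,\,T\geq 0\}$ directly from the defining formula and to check that the resulting expression matches $L^V$, relying crucially on the eigenfunction equation from Corollary \ref{V}.

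First I would verify that $\{\mathcal{P}^V_T,\,T\geq 0\}$ really is a stochastic semigroup. The identity $\mathcal{P}^V_T(1)(x) = P^V_T(F)(x)/(e^{\lambda_V T}F(x)) = 1$ is immediate from Theorem \ref{prop2}. The semigroup property $\mathcal{P}^V_{T+S} = \mathcal{P}^V_T \circ \mathcal{P}^V_S$ is a standard Doob-type transformation: since $F$ is an eigenfunction with eigenvalue $e^{\lambda_V T}$ and $\{P^V_T\}$ is a semigroup, a short algebraic manipulation,
\begin{equation*}
\mathcal{P}^V_T(\mathcal{P}^V_S(f))(x) \;=\; \frac{P^V_T\bigl(F\cdot (P^V_S(Ff)/(e^{\lambda_V S}F))\bigr)(x)}{e^{\lambda_V T}F(x)} \;=\; \frac{P^V_{T+S}(Ff)(x)}{e^{\lambda_V(T+S)}F(x)},
\end{equation*}
does the job.

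Next, I would compute the generator by differentiating at $T=0$. Setting $u(T):=P^V_T(Ff)(x)$ and $v(T):=e^{\lambda_V T}F(x)$, we have $\mathcal{P}^V_T(f)(x) = u(T)/v(T)$ with $u(0)=F(x)f(x)$, $v(0)=F(x)$, $u'(0)=(L+V)(Ff)(x)$, $v'(0)=\lambda_V F(x)$. The quotient rule yields
\begin{equation*}
\frac{d}{dT}\Big|_{T=0}\mathcal{P}^V_T(f)(x) \;=\; \frac{(L+V)(Ff)(x) - \lambda_V F(x)f(x)}{F(x)}.
\end{equation*}
Expanding $(L+V)(Ff)(x) = \mathcal{L}_A(Ff)(x) - F(x)f(x) + V(x)F(x)f(x)$ and dividing through by $F(x)$ gives
\begin{equation*}
\frac{d}{dT}\Big|_{T=0}\mathcal{P}^V_T(f)(x) \;=\; \sum_{\sigma(y)=x}e^{A(y)}\frac{F(y)}{F(x)}f(y) \;-\; \gamma_V(x)\,f(x).
\end{equation*}

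Finally, I would show this equals $L^V(f)(x)$ as defined by \eqref{gerVF1}. From the definition of $B_V$ one reads off, for $\sigma(y)=x$, the identity $\gamma_V(x)\,e^{B_V(y)} = e^{A(y)}F(y)/F(x)$. Summing over preimages and using Corollary \ref{V},
\begin{equation*}
\gamma_V(x)\sum_{\sigma(y)=x}e^{B_V(y)} \;=\; \frac{\mathcal{L}_A(F)(x)}{F(x)} \;=\; \gamma_V(x),
\end{equation*}
so $B_V$ is normalized. Consequently $L^V(f)(x) = \gamma_V(x)\sum_{\sigma(y)=x}e^{B_V(y)}f(y) - \gamma_V(x)f(x)$ matches precisely the expression obtained by differentiation, which proves the claim. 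The one non-trivial ingredient in the whole argument is the eigenfunction equation $\mathcal{L}_A(F)/F = \gamma_V$, which is exactly what justifies calling $B_V$ a \emph{normalized} potential; the rest is computation. The main potential pitfall is justifying that $\frac{d}{dT}\big|_{T=0}P^V_T(Ff)(x) = (L+V)(Ff)(x)$ pointwise — this is fine because $L+V=\mathcal{L}_A-I+V$ is a bounded operator, so the semigroup $e^{T(L+V)}$ is norm-differentiable at $0$ on the entire space of bounded measurable functions.
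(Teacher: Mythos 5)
Your proof is correct and follows essentially the same strategy as the paper: verify the semigroup property of the Doob-transformed family, compute the generator as the derivative at $T=0$ (the paper writes out the additive decomposition of the difference quotient while you invoke the quotient rule, but the resulting intermediate expression $\frac{1}{F}(L+V)(Ff)-\lambda_V f$ is identical), and then use the eigenfunction relation $\mathcal{L}_A(F)/F=\gamma_V$ to recognize the expression as $L^V$. Your explicit note that $B_V$ is normalized is a small, worthwhile clarification that the paper uses only implicitly.
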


\begin{proof}
We begin proving that the $\{\mc P^{V}_T,\,T\geq 0\}$ is a semigroup. Recalling its definition, we get
\begin{equation*}
 \begin{split}
  \mc P^{V}_{t} (\mc P^{V}_{s}(  f)) (x) = \frac{P^{V}_{t} ( F \mc P^{V}_{s} ( f)) (x)}{e^{{\lambda_V}  t}\, F(x)}\,,
 \end{split}
\end{equation*}
we need to analyze $P^{V}_{t} ( F \mc P^{V}_{s}(  f)) (x)$. In this way,
\begin{equation*}
 \begin{split}
 &P^{V}_{t} ( F \mc P^{V}_{s} (f)) (x)=\bb E_x\Big[e^{\int_0^{t}\, V(X_r) dr}\, F(X_t)\mc P^{V}_{s}  (f)(X_t)\Big]\\&
=\bb E_x\Big[e^{\int_0^{t}\, V(X_r) dr}\, \pfrac{F(X_t)}{e^{{\lambda_V}  s}\, F(X_t)}\,
 P^{V}_{s}(F f)(X_t)\Big]= \pfrac{1}{e^{{\lambda_V}  s}}P^{V}_{t+s}(F f)(x)\,.
 \end{split}
\end{equation*}
One can conclude that $\{\mc P^{V}_T,\,T\geq 0\}$ is a semigroup.

To prove that  the infinitesimal generator \eqref{gerVF1} is associated to this semigroup, we need to observe that
\begin{equation*}
 \begin{split}
\frac{  \mc P^{V}_{t} ( f) (x)-f(x)}{t} = \frac{1}{e^{{\lambda_V}  t} F(x)}\Bigg(\frac{P^{V}_{t} ( Ff) (x)-(Ff)(x)}{t} \Bigg)
+f(x)\Bigg(\frac{e^{-{\lambda_V}  t}-1}{t}\Bigg)\,.
 \end{split}
\end{equation*}
Taking the limit as $t$ goes to zero the expression above converges to
\begin{equation}\label{LV1}
 \begin{split}
  \frac{1}{F(x)}(L+V)(Ff)(x)-f(x)\lambda
&=-\lambda f(x)+V(x)f(x)+ \pfrac{1}{F(x)}\,L(Ff)(x)\,,
 \end{split}
\end{equation}
which we denote by $L^V(f)(x)$.
Using the hypotheses about $V$ and equation \eqref{equation} of the Lemma \ref{V}, we get that $L^V(f)(x)$ is equal to
\begin{equation*}
 \begin{split}
\sum_{\sigma(y)=x}\pfrac{e^{A(y)}F(y)}{F(x)} f(y) - (1-V(x)+\lambda)f(x)=\sum_{\sigma(y)=x}\pfrac{e^{A(y)}F(y)}{F(x)} \big[f(y) -f(x)\big]\,.
 \end{split}
\end{equation*}
Again, we use the Lemma \ref{V} to obtain $\gamma_{V}(x)F(x)=\mc L_A(F)(x)$. Thus, the expression above can be rewritten as
\begin{equation*}
 \begin{split}
\gamma_{V}(x)\sum_{\sigma(y)=x}\pfrac{e^{A(y)}F(y)}{\mc L_A(F)(x)} \big[f(y) -f(x)\big]
&=\gamma_{V}(x)\sum_{\sigma(y)=x}e^{B_{V}(y)}  \big[f(y) -f(x)\big]
\,.
 \end{split}
\end{equation*}

\end{proof}

\begin{corollary}\label{corol9}
 For all $f\in \mc C^+$, $x\in\{1,\dots,d\}^{\bb N}$ and $t>0$ small
\begin{equation*}
 \begin{split}
\log \Big(\frac{  \mc P^{V}_{t} ( f)(x)}{f(x)}\Big)\,\sim\,\frac{tL^V(f)(x)}{f(x)}\,,
 \end{split}
\end{equation*}
where $a_n\sim b_n$ means that $a_n/b_n\to 1$, as $n\to \infty$.
\end{corollary}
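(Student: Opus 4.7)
The plan is to read off the claim directly from the fact, established in Proposition \ref{LV}, that $L^V$ is the infinitesimal generator of the semigroup $\{\mc P^V_t, t\geq 0\}$, by combining a first-order expansion of $\mc P^V_t(f)(x)$ around $t=0$ with a first-order expansion of $\log(1+u)$ around $u=0$.

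More concretely, first I would invoke Proposition \ref{LV} to write
\begin{equation*}
\mc P^V_t(f)(x) \,=\, f(x) \,+\, t\,L^V(f)(x) \,+\, o(t), \qquad t\to 0^+,
\end{equation*}
which follows from the definition of the infinitesimal generator together with the fact that $L^V(f)$ is continuous (hence the remainder is uniform on the compact space $\{1,\dots,d\}^{\bb N}$). Since $f\in\mc C^+$ implies $f(x)>0$, I can divide by $f(x)$ to get
\begin{equation*}
\frac{\mc P^V_t(f)(x)}{f(x)} \,=\, 1 \,+\, \frac{t\,L^V(f)(x)}{f(x)} \,+\, o(t).
\end{equation*}

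Next, setting $u(t) := \mc P^V_t(f)(x)/f(x) - 1$, the above shows $u(t)\to 0$ as $t\to 0^+$, and so the elementary expansion $\log(1+u) = u + O(u^2)$ yields
\begin{equation*}
\log\!\Big(\frac{\mc P^V_t(f)(x)}{f(x)}\Big) \,=\, \frac{t\,L^V(f)(x)}{f(x)} \,+\, o(t).
\end{equation*}
Dividing both sides by $tL^V(f)(x)/f(x)$ (which is nonzero for the generic $f$ under consideration, i.e.\ whenever $L^V(f)(x)\neq 0$, the case in which the $\sim$ symbol in the sense defined in the statement is meaningful) and letting $t\to 0^+$ gives the ratio tending to $1$, which is exactly the asserted asymptotic equivalence.

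There is really no obstacle here: the only slightly delicate point is that the convention $a_t\sim b_t$ meaning $a_t/b_t\to 1$ implicitly requires $L^V(f)(x)\neq 0$, so strictly speaking the statement should be understood at points $x$ where this holds (at points where $L^V(f)(x)=0$ the two sides are simply both $o(t)$ and the equivalence is vacuous or interpreted in the sense of both being negligible compared to $t$). All the analytic content is packaged in Proposition \ref{LV}; the corollary is a one-line Taylor expansion on top of that.
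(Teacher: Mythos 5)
Your argument is essentially the paper's own proof: both start from the generator identity $\lim_{t\to 0}(\mc P^V_t(f)(x)-f(x))/t = L^V(f)(x)$ established in Proposition~\ref{LV}, divide by $f(x)>0$, and then replace $\log(1+u)$ by $u$ to first order. The only difference is that you explicitly flag the degenerate case $L^V(f)(x)=0$, where the $\sim$ relation as stated is not literally meaningful; the paper glosses over this, so your remark is a small but welcome clarification rather than a different route.
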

\begin{proof}
 In the proof above we obtained that
\begin{equation*}
 \begin{split}
\lim_{t\to 0}\frac{  \mc P^{V}_{t} ( f) (x)-f(x)}{t}=L^V(f)(x)\,,
 \end{split}
\end{equation*}
where
$L^V(f)(x)=-\lambda f(x)+V(x)f(x)+ \pfrac{1}{F(x)}L(Ff)(x)$.
Then, for $t$ small
\begin{equation*}
 \begin{split}
\frac{  \mc P^{V}_{t} ( f)(x)}{f(x)}-1\,\sim\,\frac{tL^V(f)(x)}{f(x)}\,,
 \end{split}
\end{equation*}
for all $f\in \mc C^+$.
Since for all $x$ fixed and $t$ small we get
\begin{equation*}
 \begin{split}
\log \Big(\frac{  \mc P^{V}_{t} ( f)(x)}{f(x)}\Big)\,\sim\,\frac{  \mc P^{V}_{t} ( f)(x)}{f(x)}-1\,,
 \end{split}
\end{equation*}
we finished the proof.

\end{proof}

We will elaborate now on the  initial stationary probability $\mu_{B_{V},\gamma_{V}}$.
Notice that all of the above depends on the choice of the initial a priori
probability (which, in our case, is associated to the generator $L=\mathcal{L}_A-I$).
 The stationary measure for the continuous time process generated by $L^V$ (with exponential time of jump equal to
 $\gamma(x)=\gamma_{V}(x)=1- V(x)+\lambda_{V}$) is
\begin{equation}\label{muVgamma}
 \mbox{d}\mu_{B_{V},\gamma_{V}}(x)\, =\,\frac{1}{\gamma_{V}(x)}\,
\frac{\mbox{d}\mu_{B_{V}}(x)}{\int \frac{1}{\gamma_{V}}\, \mbox{d} \mu_{B_{V}}}\,,
\end{equation}
where $\mu_{B_{V}}$ is discrete time equilibrium  for the normalized Lipschitz potential
$B_{V}(y)=  \,A(y)+\log{F(y)}-\log{F(\sigma(y))}- \log \gamma_V(\sigma(y))$. In other words,
for any $f\in\mc C$, we have
\begin{equation*}
\int L^{V}(f) \, \,\mbox{d}\mu_{B_{V},\gamma_{V}}\,=\,0\,.
\end{equation*}

As we said before, the appearance of the term $\frac{1}{\gamma_V}$ introduce a new element,
which was not present in the classical discrete time setting.
\bigskip

\begin{definition} Given a Lipschitz function $V$,
we define a continuous time Markov process $\{Y^{V}_T, T\geq 0\}$ with state space $\{1,\dots,d\}^{\bb N}$ whose
infinitesimal generator $L^{V}$ acts on bounded mensurable functions $f:\{1,\dots,d\}^{\bb N}\to \bb R$ by the expression
\begin{equation}\label{gerVF}
 L^{V}(f)(x)=\gamma_{V}(x)\, \sum_{\sigma(y)=x}e^{B_{V}(y)}\big[f(y)-f(x)\big]\,,
\end{equation}
where $B_{V}$ and  $\gamma_{V}$ are defined in \eqref{defs}.
Now, we consider the  initial stationary probability  $\mu_{B_{V},\gamma_{V}}$ defined in \eqref{muVgamma}.
We call this process $\{Y^{V}_T, T\geq 0\}$ {\bf{\emph{ the continuous time Gibbs state for the potential $V$}}}.
This defines a probability $\bb P^V:=\bb P^V_{\mu_{B_{V},\gamma_{V}}}$ on the Skorohod space  $\mc D$ which we call {\bf {\emph{the Gibbs probability
 for the interaction $V$}}}.
\end{definition}

Notice that for $\{Y^{V}_T, T\geq 0\}$, the exponential time of jumping tends to be larger when we are close to the maximum of $V$.
For a generic continuous time path, the particle stays more time on this region.

If $V$ is of the form $-\frac{L(u)}{u}$, for some $u\in\mc C^+$, then, $\lambda=0$, and $\mu_A= \mu_{B_{V}}.$
In this case $\gamma=\frac{\mathcal{L}_A(u)}{u}.$

\section{Relative Entropy,  Pressure and the equilibrium state for $V$}

One can ask:   \textquotedblleft Did the Gibbs state of the last section satisfy a variational principle?"  We will address this question  in the present section.

\begin{definition}\label{admissible}
The probability $\tilde{\bb P}_\mu= \tilde{\bb P}_\mu^{\tilde{\gamma},\tilde{A} } $ on  $\mc D$
is called admissible, if it is generated by  the initial measure $\mu$ and the
continuous time Markov chain with infinitesimal generator $\tilde{L}$, which acts
on bounded mensurable functions $f:\{1,\dots,d\}^{\bb N}\to \bb R$ by
\begin{equation}\label{Ltilde}
\tilde{L} (f)(x)=\tilde{\gamma}(x)\, \sum_{\sigma(y)=x}\,e^{\tilde{A} (y)} \big[f(y)-f(x)\big]\,,
\end{equation}
where $\tilde{\gamma}$ is a strictly positive  continuous function, and, $\tilde{A}$ is a normalized Lipschitz potential.
We point out that $\mu$ do not have to be stationary for this chain.
\end{definition}

Notice that according to the last section all the Gibbs Markov chains $\bb P^V_{\mu_{B_{V},\gamma_{V}} }$
 one gets from a generic $V$  are admissible.
If we take any $\mu$ on $\{1,\dots,d\}^{\bb N}$, and  we denote by $\bb P_{\mu}$  the one we get when $\tilde{A}=A$ and $\tilde\gamma=1$,
 i.e.,
 the one we get from the unperturbed system with the initial measure $\mu$, then  $\bb P_{\mu}$ is also admissible.

In the same way as in \eqref{muVgamma}, the stationary measure
for the continuous time process with generator \eqref{Ltilde} is
\begin{equation}\label{mutilde}
 \mbox{d}\mu_{\tilde{A},\tilde{\gamma}}(x) =\frac{1}{\tilde{\gamma}(x)}\,
\frac{\mbox{d}\mu_{\tilde{A}}(x)}{\int \frac{1}{\tilde{\gamma}}\, \mbox{d} \mu_{\tilde{A}}},
\end{equation}
where $\mu_{\tilde{A}}$ is discrete time equilibrium  for
$\tilde{A}$.

From now on, we will consider a certain Lipschitz potential $V$ fixed until the end of this section.
The different probabilities $\tilde{\bb P}_{ \mu_{\tilde{A},\tilde{\gamma}}}^{\tilde{\gamma},\tilde{A} }$
 on $\mc D$ will describe the possible candidates for being the {\it stationary equilibrium continuous
time Markov chain for $V$} as we will explain later in our reasoning.

Given $V$ we
will consider  a {\it variational problem in the continuous time setting}
 which is analogous to the pressure problem in the discrete time setting (thermodynamic formalism).
This requires a meaning for {\it entropy}. A continuous time stationary Markov chain,
 which maximizes our variational problem, will be the {\it continuous time equilibrium state for $V$}.
By changing  $\tilde{\gamma}$  and $\tilde{A}$, we get a set of different infinitesimal
generators that are candidates to define {\it the continuous time equilibrium state} for
 the given potential $V$.
Nevertheless, it just makes sense to look for candidates among the admissible ones.
We will show in the end that the continuous time  equilibrium state for $V$
is indeed the Gibbs state $\bb P^V_{\mu_{B_{V},\gamma_{V}} }$ of the last section.

We will fix a certain $\mu$ on $\mc P(\{1,\dots,d\}^{\bb N})$ (no restrictions about it).
First, we want to give  a meaning for the  relative entropy of  any admissible probability $\tilde{\bb P}_\mu$  concerning $\bb P_\mu$.
The reason why we use the same  initial measure $\mu$ for both processes is that
we need  that the associated probabilities, $\tilde{\bb P}_\mu$  and $\bb P_\mu$,
on  $\mc D$ are  absolutely continuous with respect to each other.
Anyway, the final numerical result for the value of entropy  will not depend on the common $\mu$ we chose as the initial probability,
as can be seen in Lemma \ref{lema13}.
The common $\mu$ could de eventually $\mu_A$.
For a fixed $T\geq 0$, we consider the relative entropy
of the $\tilde{\bb P}_\mu=  \tilde{\bb P}_\mu^{\tilde{\gamma},\tilde{A} }$,
for some $\tilde{\gamma},\tilde{A}$, concerning $\bb P_\mu$ up to  time $T\geq 0$
by
\begin{equation}\label{entropy}
 H_T(\tilde{\bb P}_\mu\vert\bb P_\mu)\,=-\,\int_{\mc D}
\log\Bigg(\frac{\mbox{d}\tilde{\bb P}_\mu}{\mbox{d}\bb P_\mu}\Big|_{\mc F_T}\Bigg)(\omega)\,
\mbox{d}\tilde{\bb P}_\mu(\omega)\,.
\end{equation}

Using the property that the  logarithm is a concave function and Jensen's inequality, we obtain that
for any $g$ we have $\int \log g \,\mbox{d} \mu\leq \log \int g \,\mbox{d} \mu$. Then
$H_T(\tilde{\bb P}_\mu\vert\bb P_\mu)\leq 0$.
Negative entropies appear in a natural way when one analyzes a dynamical system with the property that
 each point has an uncountable number of preimages (see \cite{LMMS} and  \cite{LMST}).

By  Proposition \ref{apendice1} in Appendix \ref{RN}, the logarithm of the Radon-Nikodym derivative described above can be written as
\begin{equation*}
\log\Bigg(\frac{\mbox{d}\tilde{\bb P}_\mu}{\mbox{d}\bb P_\mu}\Big|_{\mc F_T}\Bigg)(\omega)
\end{equation*}
\begin{equation}\label{logRN}\,=\,
\int_0^T [1-\tilde{\gamma}(\omega_s)]\,\mbox{d}s+\sum_{s\leq T}\textbf{1}_{\{\sigma(\omega_s)=\omega_{s^-}\}}
\big[\tilde{A}(\omega_s)-A(\omega_s)+\log\big(\tilde{\gamma}(\sigma(\omega_s))\big)\big]\,.
\end{equation}

\begin{lemma}\label{lemma11}
For all $G\in\mc C$, it is true that
\begin{equation*}
\begin{split}
 \int_{\mc D}\sum_{s\leq T}\textbf{1}_{\{\sigma(\omega_s)\,=  \,\omega_{s^-}\}} G(\omega_s)
\,\mbox{d}\tilde{\bb P}_\mu(\omega)
= & \,\int_{\mc D}\int_0^T\tilde \gamma(\omega_s)G(\omega_s)\, \mbox{d}s\,\,
\mbox{d}\tilde{\bb P}_\mu(\omega)\, \\
=&\, \int_0^T\int_{\{1,\dots, d\}^{\bb N}}\tilde{P}_s(\tilde{\gamma} G)(x)\, \mbox{d}\mu(x)\, \mbox{d}s\,,\\
\end{split}
\end{equation*}
where $\{\tilde{P}_s,\,s\geq 0\}$ is the semigroup associated to the Markov chain that it was generated by $\tilde{L}$, see \eqref{Ltilde}.
\end{lemma}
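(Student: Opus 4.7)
The plan is to treat the two claimed equalities separately: the first is a compensation (Lévy system) identity for the jump functional of the pure-jump Markov process generated by $\tilde L$, while the second is a Fubini--Tonelli swap combined with the definition of the semigroup $\{\tilde P_s,\, s\ge 0\}$.

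For the first equality I would begin from the observation that $\tilde A$ is normalized, so $\sum_{\sigma(y)=x}e^{\tilde A(y)}=1$ for every $x$, and hence the process generated by \eqref{Ltilde} leaves state $x$ with total rate $\tilde\gamma(x)$, jumping to a $\sigma$-preimage $y$ with conditional probability $e^{\tilde A(y)}$. The natural tool is then Dynkin's formula applied to the bounded generator $\tilde L$: for every $G\in\mc C$ the weighted jump functional
\[
\sum_{s\le T}\textbf{1}_{\{\sigma(\omega_s)=\omega_{s^-}\}}\,G(\omega_s)
\]
admits $\int_0^T \tilde\gamma(\omega_s)G(\omega_s)\,\mbox{d}s$ as its predictable compensator, so the difference is a $\tilde{\bb P}_\mu$-martingale; taking expectation at $t=T$ yields the first equality. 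The cleanest trajectorial derivation conditions on the ordered sequence of jump times (between jumps, the holding law at $x$ is $\mathrm{Exp}(\tilde\gamma(x))$, and at a jump the new state is distributed as $e^{\tilde A(y)}$ over preimages of the current state), then telescopes over successive jumps using the normalization of $\tilde A$.

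For the second equality I would swap $\int_{\mc D}$ and $\int_0^T$ by Fubini--Tonelli (permissible because $\tilde\gamma G$ is bounded on the compact space $\{1,\dots,d\}^{\bb N}$ and $\tilde{\bb P}_\mu$ is a probability measure), then, for each fixed $s\in[0,T]$, recognize
\[
\int_{\mc D}(\tilde\gamma G)(\omega_s)\,\mbox{d}\tilde{\bb P}_\mu(\omega)\;=\;\bb E_{\tilde{\bb P}_\mu}\!\left[(\tilde\gamma G)(X_s)\right]\;=\;\int\tilde P_s(\tilde\gamma G)(x)\,\mbox{d}\mu(x),
\]
directly from the definition of $\tilde{\bb P}_\mu$ as the law of the Markov chain with initial distribution $\mu$ and transition semigroup $\tilde P_s=e^{s\tilde L}$. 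The main obstacle I anticipate is justifying the compensation identity on the uncountable state space $\{1,\dots,d\}^{\bb N}$, where the standard finite-state matrix arguments do not apply verbatim; fortunately the boundedness of $\tilde L$ on $\mc C$ (ensured by continuity of $\tilde\gamma$ and Lipschitz regularity of $\tilde A$ on a compact space) makes Dynkin's formula and the associated semimartingale decomposition directly available, so the technical work reduces to a careful accounting of jump rates rather than a genuine infinite-dimensional difficulty.
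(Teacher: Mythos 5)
Your overall strategy matches the paper's: both prove the first equality by exhibiting the time integral as the compensator of the jump functional (so that the difference is a mean-zero $\tilde{\bb P}_\mu$-martingale), and both obtain the second equality by Fubini plus the definition of $\tilde P_s$. The paper carries out the martingale verification by hand — decomposing the jump count into per-site counters $N_t(y)$, reducing to $G\equiv 1$ via the stochastic integral $\int G\,\mbox{d}M_T$, and estimating $\tilde{\bb E}_x[N_t(y)]$ on a time partition — whereas you invoke Dynkin's formula and the L\'evy-system compensation formula abstractly. That is a legitimate route in principle, and the Fubini step is handled correctly.

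There is, however, a genuine gap in the compensator claim. For a pure-jump process with jump kernel $n(x,\mbox{d}y)=\tilde\gamma(x)\sum_{\sigma(y)=x}e^{\tilde A(y)}\delta_y(\mbox{d}y)$, the L\'evy-system formula gives the predictable compensator of $\sum_{s\le T}\textbf{1}_{\{\sigma(\omega_s)=\omega_{s^-}\}}G(\omega_s)$ --- with $G$ evaluated at the \emph{post}-jump state $\omega_s$ --- as
\begin{equation*}
\int_0^T\!\!\int G(y)\,n(\omega_s,\mbox{d}y)\,\mbox{d}s \;=\; \int_0^T\tilde\gamma(\omega_s)\,\big(\mathcal L_{\tilde A}G\big)(\omega_s)\,\mbox{d}s\,,
\end{equation*}
and not $\int_0^T\tilde\gamma(\omega_s)G(\omega_s)\,\mbox{d}s$. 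The latter is the compensator only if $G$ is evaluated at the \emph{pre}-jump state $\omega_{s^-}$, since then one integrates $g(x,y)=G(x)$ against $n(x,\mbox{d}y)$, whose total mass is $\tilde\gamma(x)$. A concrete check: with $\tilde\gamma\equiv 1$, $\tilde A$ the symmetric Bernoulli potential on $\{1,2\}^{\bb N}$, $G=\textbf{1}_{[x_1=1]}$, and $\mu=\delta_x$ with $x_1=2$, the left-hand side of the lemma is $\tfrac{1}{2}T+o(T)$ while the right-hand side is $o(T)$ for small $T$. So citing Dynkin's formula does not, as you suggest, immediately produce the identity; you must either replace $G(\omega_s)$ by $G(\omega_{s^-})$ in the jump sum, or compensate by $\tilde\gamma\,\mathcal L_{\tilde A}G$ instead of $\tilde\gamma\,G$. (The paper's own proof shares this pre-/post-jump ambiguity: the decomposition $\sum_{s\le t}\textbf{1}_{\{\text{jump}\}}=\sum_{k\ge 1}\sum_{i_1,\dots,i_k}N_t(i_k\dots i_1x)$ counts \emph{arrivals}, while the identity $\tilde{\bb E}_x[N_t(y)]=\tilde\gamma(y)\tilde{\bb E}_x[\int_0^t\textbf{1}_{[X_s=y]}\mbox{d}s]$ holds for \emph{departures}, and $\int G\,\mbox{d}M_T$ is a martingale only when the integrand is predictable, i.e.\ $G(\omega_{s^-})$. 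The downstream entropy identity of Lemma~\ref{lema13} survives nonetheless, because at stationarity one has $\int\tilde\gamma\,\mathcal L_{\tilde A}(G)\,\mbox{d}\mu_{\tilde A,\tilde\gamma}=\int\tilde\gamma\,G\,\mbox{d}\mu_{\tilde A,\tilde\gamma}$ from $\mathcal L_{\tilde A}^*\mu_{\tilde A}=\mu_{\tilde A}$ and $\tilde\gamma\,\mbox{d}\mu_{\tilde A,\tilde\gamma}\propto\mbox{d}\mu_{\tilde A}$.) You should also be careful that Dynkin's formula per se applies to additive functionals of the form $f(X_t)-f(X_0)-\int_0^t\tilde Lf(X_s)\,\mbox{d}s$; the compensation of a jump functional is the L\'evy-system result, a related but distinct statement, and conflating the two obscures exactly the pre/post distinction that matters here.
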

The proof of this lemma is  in Appendix \ref{provalema11}.

Now, from \eqref{entropy}, \eqref{logRN} and the lemma above we obtain
\begin{equation}\label{17}
\begin{split}
  H_T(\tilde{\bb P}_\mu\vert\bb P_\mu)\,=\,&
\int_0^T\int_{\{1,\dots, d\}^{\bb N}}\tilde{P}_s(\tilde{\gamma}-1)(x)\, \mbox{d}\mu(x)\, \mbox{d}s\\&+
\int_0^T\int_{\{1,\dots, d\}^{\bb N}}\tilde{P}_s(\tilde{\gamma}[A-\tilde{A}-
\log\tilde{\gamma}\circ\sigma])(x)\, \mbox{d}\mu(x)\, \mbox{d}s\,.
\end{split}
\end{equation}
From the previous expression and ergodicity we get that there exists the limit $\lim_{T\to \infty} \frac{1}{T} H_T(\tilde{\bb P}_\mu\vert\bb P_\mu)$.

\begin{definition}\label{12}
For a fixed initial probability $\mu$ on $\mc P(\{1,\dots,d\}^{\bb N})$, the limit
\begin{equation*}
\lim_{T\to \infty} \frac{1}{T} H_T(\tilde{\bb P}_\mu\vert\bb P_\mu)
\end{equation*}
is  called the \emph{relative entropy} of the measure $\tilde{\bb P}_\mu$ concerning the measure $\bb P_\mu$
 (recall that $\bb P_\mu$ is associated to the initial fixed potential $A$).
Moreover, we denote this limit by $H(\tilde{\bb P}_\mu\vert\bb P_\mu)$.
\end{definition}
The goal of the next result is characterize the \emph{relative entropy} of the measure $\tilde{\bb P}_\mu$ concerning  $\bb P_\mu$.
\begin{lemma}\label{lema13}
The relative entropy $ H(\tilde{\bb P}_\mu\vert\bb P_\mu)$ can be written as
 \begin{equation*}
\begin{split}
\,&
\int_{\{1,\dots, d\}^{\bb N}}(\tilde{\gamma}(x)-1)\, \mbox{d}\mu_{\tilde{A},\tilde{\gamma}}(x) \\&\,\,\,+\,
\int_{\{1,\dots, d\}^{\bb N}}\tilde{\gamma}(x)\,\big[A(x)-
\tilde{A}(x)-\log(\tilde{\gamma}\circ\sigma)(x)\big]\, \mbox{d}\mu_{\tilde{A},\tilde{\gamma}}(x)\, .
\end{split}
\end{equation*}
\end{lemma}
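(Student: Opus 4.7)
The plan is to start from the formula \eqref{17} derived just above the lemma, divide by $T$, and pass to the limit as $T\to\infty$ using a mean ergodic theorem for the perturbed semigroup $\{\tilde{P}_s\}_{s\geq 0}$. By Definition \ref{12},
\begin{equation*}
H(\tilde{\bb P}_\mu\vert\bb P_\mu) = \lim_{T\to\infty}\frac{1}{T}H_T(\tilde{\bb P}_\mu\vert\bb P_\mu),
\end{equation*}
so it suffices to show that for any bounded continuous function $g:\{1,\dots,d\}^{\bb N}\to\bb R$,
\begin{equation*}
\lim_{T\to\infty}\frac{1}{T}\int_0^T\int \tilde{P}_s(g)(x)\,\mbox{d}\mu(x)\,\mbox{d}s \;=\; \int g\,\mbox{d}\mu_{\tilde{A},\tilde{\gamma}},
\end{equation*}
and then to apply this with $g_1:=\tilde{\gamma}-1$ and $g_2:=\tilde{\gamma}\bigl[A-\tilde{A}-\log(\tilde{\gamma}\circ\sigma)\bigr]$, both of which are continuous on the compact space $\{1,\dots,d\}^{\bb N}$ because $\tilde{\gamma}$ is strictly positive continuous and $A,\tilde{A}$ are Lipschitz.

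First I would establish that $\mu_{\tilde{A},\tilde{\gamma}}$ is the unique stationary probability for the semigroup $\{\tilde{P}_s\}$, and that this semigroup is ergodic. This follows by exactly the same argument Smania's argument that was used in Section 2 to prove ergodicity of $\mu_A$ for the a priori semigroup: writing $\tilde{L}=\tilde{\gamma}(\mc L_{\tilde{A}}-I)$, any invariant $f$ with $\tilde{L}(f)=0$ satisfies $\mc L_{\tilde{A}}(f)=f$ (since $\tilde{\gamma}>0$), and the classical Ruelle--Perron--Frobenius theorem for the normalized Lipschitz potential $\tilde{A}$ forces $f$ to be constant $\mu_{\tilde{A}}$-a.e., hence $\mu_{\tilde{A},\tilde{\gamma}}$-a.e.

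Second, the spectral gap of $\mc L_{\tilde{A}}$ on Lipschitz functions transfers to an exponential rate of decorrelation for $\tilde{P}_s$, giving the stronger property that $\tilde{P}_s(g)(x)\to\int g\,\mbox{d}\mu_{\tilde{A},\tilde{\gamma}}$ uniformly in $x$ for every continuous $g$. From this the Cesàro average $\frac{1}{T}\int_0^T\tilde{P}_s(g)(x)\,\mbox{d}s$ also converges uniformly in $x$ to $\int g\,\mbox{d}\mu_{\tilde{A},\tilde{\gamma}}$, so by bounded convergence one can exchange the limit with the integral against the arbitrary initial probability $\mu$. Applying this to $g_1$ and $g_2$ in \eqref{17} and summing produces exactly the two terms in the statement of the lemma. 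As a bonus, this shows that the answer is independent of the common initial probability $\mu$ used to couple $\tilde{\bb P}_\mu$ and $\bb P_\mu$.

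The main obstacle is the uniform (in $x$) ergodic convergence needed to integrate out $\mu$. If one only has $L^2(\mu_{\tilde{A},\tilde{\gamma}})$-ergodicity, the interchange of limit and $\mu$-integral is not automatic for arbitrary $\mu$; a softer alternative, avoiding the spectral gap, is to first prove the lemma for the particular choice $\mu=\mu_{\tilde{A},\tilde{\gamma}}$ (where $\int \tilde{P}_s(g)\,\mbox{d}\mu_{\tilde{A},\tilde{\gamma}}=\int g\,\mbox{d}\mu_{\tilde{A},\tilde{\gamma}}$ for every $s$, so no limit is required) and then argue separately that $\frac{1}{T}H_T(\tilde{\bb P}_\mu\vert\bb P_\mu)$ is independent of $\mu$ in the limit because $\tilde{\bb P}_\mu$ and $\tilde{\bb P}_{\mu_{\tilde{A},\tilde{\gamma}}}$ become asymptotically indistinguishable on $\mc F_T$ as $T\to\infty$. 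Either route delivers the formula displayed in the lemma.
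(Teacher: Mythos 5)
Your argument takes the same route as the paper, whose own proof is just one line (Definition~\ref{12}, formula~\eqref{17}, and ``Ergodic Theorem''); you supply the details the paper leaves implicit and correctly identify the $\mu$-interchange issue that needs care. One caution on your primary route: the assertion that a spectral gap of $\mc L_{\tilde A}$ transfers directly to exponential decorrelation for $\tilde P_s = e^{s\,\tilde\gamma(\mc L_{\tilde A}-I)}$ is not automatic when $\tilde\gamma$ is nonconstant, because $\tilde\gamma$ and $\mc L_{\tilde A}$ do not commute and $\tilde L$ is not obtained from $\mc L_{\tilde A}$ by functional calculus, so one would need a separate argument (a minorization/Doeblin estimate for the continuous-time chain, or a coupling) rather than a direct inheritance of the discrete-time gap. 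Your softer alternative --- evaluate $\frac1T H_T$ at $\mu=\mu_{\tilde A,\tilde\gamma}$, where stationarity makes each $\frac1T H_T$ already equal to the claimed two-integral expression with no limiting procedure, and then establish independence of $\mu$ --- is the cleaner path and is consistent with the paper's own remark that the value of the entropy does not depend on the common initial $\mu$; just replace the ``asymptotically indistinguishable on $\mc F_T$'' phrase with the precise statement that $\frac1T\int_0^T\tilde P_s(g)\,\mbox{d}s$ converges (by the mean ergodic theorem and uniqueness of the stationary law) and then integrate against $\mu$ by bounded convergence.
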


\begin{proof}
 This proof follows by Definition \ref{12}, expression \eqref{17} and Ergodic Theorem.
\end{proof}

\begin{definition} For $A$ fixed, and a given Lipschitz potential $V$, we denote the Pressure (or, Free Energy) of $V$ as the value
\begin{equation*}
\textbf{P}(V):= \sup_{\at{ \tilde{\bb P}_\mu}{\text{admissible}}}
\,H(\tilde{\bb P}_\mu\vert\bb P_\mu)\, +\,
\int_{\{1,\dots, d\}^{\bb N}}V(x)\, \mbox{d}\mu_{\tilde{A},\tilde{\gamma}}(x)\,,
\end{equation*}
where $\mu_{\tilde{A},\tilde{\gamma}}$ is the initial stationary probability for the infinitesimal generator $\tilde{L}$, defined in \eqref{Ltilde}.
Moreover, any admissible element which maximizes $\textbf{P}(V)$ is called a continuous time  equilibrium state for $V$.
\end{definition}

Finally, we can state the main result of this section:
\begin{proposition}
 The pressure of the potential $V$ is given by
\begin{equation*}
\textbf{P}(V)\,=\,H(\bb P_\mu^{V}\vert\bb P_\mu)+
\int_{\{1,\dots, d\}^{\bb N}}V(x)\,\mbox{d}\mu_{B_{V},\gamma_{V}}(x)=\lambda_{V}\, .
\end{equation*}

Therefore, the equilibrium state for $V$ is the Gibbs state for $V$.
\end{proposition}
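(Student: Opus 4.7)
My plan is to show both the identity $\textbf{P}(V)=\lambda_V$ at the candidate $\bb P^V$ and the sharp upper bound $H(\tilde{\bb P}_\mu|\bb P_\mu)+\int V\,\mbox{d}\mu_{\tilde{A},\tilde{\gamma}}\leq \lambda_V$ for every admissible $\tilde{\bb P}_\mu$. The starting point in both cases is the closed formula for relative entropy given by Lemma \ref{lema13} combined with the Perron--Frobenius identity $\gamma_V=1-V+\lambda_V$, which immediately lets me extract a free $\lambda_V$ from the objective function and reduces everything to an integral identity/inequality in discrete time.

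\smallskip

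For the \emph{equality} at $\bb P^V$, I substitute $\tilde{A}=B_V$ and $\tilde{\gamma}=\gamma_V$. Using $B_V(y)=A(y)-\log\gamma_V(\sigma(y))+\log F(y)-\log F(\sigma(y))$, the bracket $A-B_V-\log(\gamma_V\circ\sigma)$ collapses to $\log F\circ\sigma-\log F$, and since $\mu_{B_V,\gamma_V}=(1/Z)\mu_{B_V}/\gamma_V$ with $\mu_{B_V}$ shift-invariant, the factor of $\gamma_V$ cancels and $\int(\log F\circ\sigma-\log F)\,\mbox{d}\mu_{B_V}=0$. After adding $\int V\,\mbox{d}\mu_{B_V,\gamma_V}$ and using $\gamma_V-1=-V+\lambda_V$, the only surviving term is $\lambda_V$.

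\smallskip

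For the \emph{upper bound}, I rewrite $A=B_V+\log F\circ\sigma-\log F+\log\gamma_V\circ\sigma$ inside the integrand
\[
\tilde{\gamma}-\gamma_V+\tilde{\gamma}\big(A-\tilde{A}-\log\tilde{\gamma}\circ\sigma\big),
\]
integrate against $\mu_{\tilde{A},\tilde{\gamma}}$, and pass to $\mu_{\tilde{A}}$ using $\mbox{d}\mu_{\tilde{A},\tilde{\gamma}}=(1/Z)\mbox{d}\mu_{\tilde{A}}/\tilde{\gamma}$ with $Z=\int\tilde{\gamma}^{-1}\,\mbox{d}\mu_{\tilde{A}}$. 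Three groups of terms appear. First, the $\log F$ pieces combine into $\int(\log F\circ\sigma-\log F)\,\mbox{d}\mu_{\tilde{A}}$, which vanishes because $\tilde{A}$ is normalized and so $\mu_{\tilde{A}}$ is $\sigma$-invariant. Second, setting $\phi:=\gamma_V/\tilde{\gamma}$, the terms $\tilde{\gamma}-\gamma_V$ and $\tilde{\gamma}\log(\gamma_V/\tilde{\gamma})\circ\sigma$ transform into $1-\int\phi\,\mbox{d}\mu_{\tilde{A}}+\int\log\phi\,\mbox{d}\mu_{\tilde{A}}$, which is $\leq 0$ by Gibbs' elementary inequality $\log t\leq t-1$ (with equality iff $\phi\equiv 1$, i.e.\ $\tilde{\gamma}=\gamma_V$). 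Third, there is a residual $\int(B_V-\tilde{A})\,\mbox{d}\mu_{\tilde{A}}$; since both $B_V$ and $\tilde{A}$ are normalized Lipschitz potentials we have $P(B_V)=P(\tilde{A})=0$, and since $\mu_{\tilde{A}}$ is the (unique) discrete time equilibrium state for $\tilde{A}$, the classical variational principle gives $h(\mu_{\tilde{A}})+\int B_V\,\mbox{d}\mu_{\tilde{A}}\leq 0=h(\mu_{\tilde{A}})+\int\tilde{A}\,\mbox{d}\mu_{\tilde{A}}$, i.e.\ $\int(B_V-\tilde{A})\,\mbox{d}\mu_{\tilde{A}}\leq 0$, with equality iff $\tilde{A}=B_V$ by uniqueness of the equilibrium state.

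\smallskip

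Adding the three estimates yields $H(\tilde{\bb P}_\mu|\bb P_\mu)+\int V\,\mbox{d}\mu_{\tilde{A},\tilde{\gamma}}\leq \lambda_V$, with equality precisely when $\tilde{\gamma}=\gamma_V$ and $\tilde{A}=B_V$, so the supremum is attained uniquely at the Gibbs state $\bb P^V$. I expect the only delicate point to be the careful bookkeeping needed to isolate the $\phi$-dependent terms from the $B_V-\tilde{A}$ term after the change of measure from $\mu_{\tilde{A},\tilde{\gamma}}$ to $\mu_{\tilde{A}}$; the normalization constant $Z$ multiplies both sides uniformly and drops out, but one must be careful that nothing depending on $\tilde{\gamma}$ remains hidden inside the discrete time variational term, which would spoil the application of the classical principle.
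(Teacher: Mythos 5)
Your proof is correct and reaches the conclusion cleanly. The global strategy coincides with the paper's (start from Lemma~\ref{lema13}, use the Perron--Frobenius identity $\gamma_V = 1 - V + \lambda_V$ to extract $\lambda_V$, then bound the remainder by Gibbs' inequality plus the discrete-time variational principle), but the bookkeeping is genuinely different. The paper keeps $A - \tilde{A}$ together, bounds the $\tilde{\gamma}$-dependent integrand $1 - \log\tilde{\gamma} - \gamma_V/\tilde{\gamma}$ above by $-\log(\mc L_A(F)/F)$ via an add-and-subtract of $\log\gamma_V$, and then invokes the non-elementary infimum formula $\int A\,\mbox{d}\mu_{\tilde A} + h(\mu_{\tilde A}) = \inf_{u\in\mc C^+}\int\log(\mc L_A(u)/u)\,\mbox{d}\mu_{\tilde A}$ from \cite{artur} and \cite{Ki1}, with $u=F$ as competitor. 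You instead substitute the coboundary identity $A = B_V + \log F\circ\sigma - \log F + \log\gamma_V\circ\sigma$ directly, which splits the remainder into a vanishing $\log F$-coboundary term, a $\phi = \gamma_V/\tilde{\gamma}$ term controlled by the elementary inequality $\log t \le t-1$, and the pure discrete-time term $\int(B_V - \tilde{A})\,\mbox{d}\mu_{\tilde A}$, which you handle with nothing more than $h(\mu_{\tilde A}) + \int B_V\,\mbox{d}\mu_{\tilde A} \le P(B_V) = 0 = P(\tilde A) = h(\mu_{\tilde A}) + \int\tilde A\,\mbox{d}\mu_{\tilde A}$. Your route is therefore more self-contained, avoiding the cited infimum characterization and exposing exactly which two inequalities are saturated (namely $\phi\equiv 1$, i.e.\ $\tilde\gamma=\gamma_V$, and $\tilde A = B_V$ by uniqueness of the equilibrium state) — this gives the equality case, and hence uniqueness of the maximizer, as a byproduct, which the paper establishes separately. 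One minor slip in wording: the normalization constant $Z = \int\tilde\gamma^{-1}\,\mbox{d}\mu_{\tilde A}$ does not ``drop out''; it appears as the positive prefactor $1/Z$ on the nonpositive bracket, so only its positivity is used, not any cancellation.
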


\begin{proof}

Recalling the definition of the measure $\mu_{\tilde{A},\tilde{\gamma}}$ in \eqref{mutilde} and the fact that the measure
$\mu_{\tilde{A}}$ is invariant for the shift, we get  that the second term
in \eqref{17} can be rewritten as
\begin{equation*}
\begin{split}
\Big[\int \pfrac{1}{\tilde{\gamma}}\, \mbox{d} \mu_{\tilde{A}}\Big]^{-1}\int_{\{1,\dots, d\}^{\bb N}}\big(A(x)-
\tilde{A}(x)\big) \mbox{d}\mu_{\tilde{A}}(x)\,
-\,\int_{\{1,\dots, d\}^{\bb N}}
\tilde{\gamma}(x)\,\log\tilde{\gamma}(x)\, \mbox{d}\mu_{\tilde{A},\tilde{\gamma}}(x)
\, .
\end{split}
\end{equation*}
Let $V$ be a Lipschtz function. Thus,
\begin{equation*}
\begin{split}
 &H(\tilde{\bb P}_\mu\vert\bb P_\mu)+\int_{\{1,\dots,d\}^{\bb N}}V(x)\,
 \mbox{d}\mu_{\tilde{A},\tilde{\gamma}}(x)\\&=\,
\int_{\{1,\dots, d\}^{\bb N}}\big(\tilde{\gamma}(x)-\tilde{\gamma}(x)\,\log\tilde{\gamma}(x)-1+V(x)\big)\,
 \mbox{d}\mu_{\tilde{A},\tilde{\gamma}}(x) \\
&\,\,\,\,\,\,\qquad\quad+\,
\Big[\int \pfrac{1}{\tilde{\gamma}}\, \mbox{d} \mu_{\tilde{A}}\Big]^{-1}\int_{\{1,\dots, d\}^{\bb N}}\big(\,A(x)-
\tilde{A}(x)\big) \mbox{d}\mu_{\tilde{A}}(x)
\, .
\end{split}
\end{equation*}
From equation \eqref{equation}, we can express the function $V$ as $ \lambda_{V}+1-\gamma_{V}(x)$.
Then the expression above becomes
\begin{equation}\label{entr+V}
\begin{split}
 &H(\tilde{\bb P}_\mu\vert\bb P_\mu)+\int_{\{1,\dots,d\}^{\bb N}}V(x)\,
 \mbox{d}\mu_{\tilde{A},\tilde{\gamma}}(x)\\&
=\,\lambda_{V}\,+\,\Big[\int \pfrac{1}{\tilde{\gamma}}\, \mbox{d} \mu_{\tilde{A}}\Big]^{-1}
\int_{\{1,\dots, d\}^{\bb N}}\Big(1-\log\tilde{\gamma}(x)-\frac{\gamma_{V}(x)}{\tilde{\gamma}(x)}\Big)\,
 \mbox{d}\mu_{\tilde{A}}(x) \\
&\,\,\,\,\,\,\qquad\quad+\,
\Big[\int \pfrac{1}{\tilde{\gamma}}\, \mbox{d} \mu_{\tilde{A}}\Big]^{-1}\int_{\{1,\dots, d\}^{\bb N}}
\big(A(x)-\tilde{A}(x)\big) \,\mbox{d}\mu_{\tilde{A}}(x)
\, .
\end{split}
\end{equation}
The last integral above is equal to $\int A\,\mbox{d}\mu_{\tilde{A}}+h(\mu_{\tilde{A}})$.
In order to analyze the second term in \eqref{entr+V}, we add and subtract $\log\gamma_{V}(x)$ in the integrand and we
use  $1+\log y-y\leq 0$, for all
 $y\in(0,\infty)$. Thus,
\begin{equation*}
1-\log\tilde{\gamma}(x)-\frac{\gamma_{V}(x)}{\tilde{\gamma}(x)}\leq  -\log\pfrac{\mc L_A(F)(x)}{F(x)}\,,
\end{equation*}
because $\gamma_{V}(x)=\pfrac{\mc L_A(F)(x)}{F(x)}$, for any $x\in\{1,\dots,d\}^{\bb N}$.
This implies that
\begin{equation*}
\begin{split}
 &H(\tilde{\bb P}_\mu\vert\bb P_\mu)+\int_{\{1,\dots,d\}^{\bb N}}V(x)\,
 \mbox{d}\mu_{\tilde{A},\tilde{\gamma}}(x)
\\&\leq\,\lambda_{V}\,+\,
\Big[\int \pfrac{1}{\tilde{\gamma}}\, \mbox{d} \mu_{\tilde{A}}\Big]^{-1}\Bigg[
-\int_{\{1,\dots, d\}^{\bb N}}
\log\pfrac{\mc L_A(F)(x)}{F(x)}\,
\mbox{d}\mu_{\tilde{A}}(x)+\int_{\{1,\dots,d\}^{\bb N}} A\,\mbox{d}\mu_{\tilde{A}}+h(\mu_{\tilde{A}})\Bigg]
\, .
\end{split}
\end{equation*}
By \cite{artur} (see Theorem 4) and \cite{Ki1}, we have
\begin{equation*}
 \int_{\{1,\dots,d\}^{\bb N}} A\,\mbox{d}\mu_{\tilde{A}}+h(\mu_{\tilde{A}})\,=\,\inf_{u\in\mc C^+} \int_{\{1,\dots, d\}^{\bb N}}
\log\pfrac{\mc L_A(u)(x)}{u(x)}\,
\mbox{d}\mu_{\tilde{A}}(x)\,.
\end{equation*}
 Since $F\in \mc C^+$ and
 $\int \pfrac{1}{\tilde{\gamma}}\, \mbox{d} \mu_{\tilde{A}}>0$, we obtain
\begin{equation*}
\begin{split}
 &H(\tilde{\bb P}_\mu\vert\bb P_\mu)+\int_{\{1,\dots,d\}^{\bb N}}V(x)\,
 \mbox{d}\mu_{\tilde{A},\tilde{\gamma}}(x)\,\leq\,\lambda_{V}\,.
\end{split}
\end{equation*}

One special case is
when the measure $\tilde{\bb P}_\mu$ is $\bb P_\mu^{V}$,
i.e.,

 $$\tilde{\gamma}(x)= \gamma_{V}(x)=1-V(x)+\lambda_{V}=\frac{\mc L_A(F)(x)}{F(x)},$$
and
$$\tilde{A}(x)=B_{V}(x)= A(x)+\log F(x)-\log \mc L_A(F)(\sigma(x)).$$
In this case, the expression \eqref{entr+V} becomes
\begin{equation*}
\begin{split}
 H(\bb P_\mu^V&\vert\bb P_\mu)+\int_{\{1,\dots,d\}^{\bb N}}V(x)\,
 \mbox{d}\mu_{B_{V},\gamma_{V}}(x)\\
 =\,&\lambda_{V}
+\,\Big[\int \pfrac{1}{\gamma_{V}}\, \mbox{d}\mu_{B_{V}}\Big]^{-1}
\int_{\{1,\dots, d\}^{\bb N}}\Big[-\log\Big(\pfrac{\mc L_A(F)(x)}{F(x)}\Big)\\
& -\log F(x)+\log \mc L_A(F)(\sigma(x))\Big]\, \mbox{d}\mu_{B_{V}}(x) \,.
\end{split}
\end{equation*}
Due to the fact that  $\mu_{B_{V}}$ is an invariant measure for the shift, we finally get
\begin{equation*}
\begin{split}
 &H(\bb P_\mu^V\vert\bb P_\mu)+\int_{\{1,\dots,d\}^{\bb N}}V(x)\,
 \mbox{d}\mu_{B_{V},\gamma_{V}}(x)\,=\,\lambda_{V}\,.
\end{split}
\end{equation*}
\end{proof}

\section{A large deviation principle for the empirical measure}

Nice general references on this topic are \cite{DS} and \cite{Ki1}. We point out that the process we consider {\bf is not} reversible
differently from \cite{DL}.

This section is divided on two subsections. The first one deals with the existence and the uniqueness of equilibrium states and the second one is about large deviation properties.

\subsection{Existence and uniqueness of equilibrium states}

As before, we considered a fixed normalized Lipschitz potential $A$ and the corresponding infinitesimal generator $L=\mathcal{L}_A-I$.
In this subsection we will assume that the perturbation $V$ is a  Lipschitz function.
As we mentioned before (see Subsection \ref{eigenprobability}), for the given potential $V$,
one can find an eigenprobability $\nu_V$.
This means that there exists $\lambda_V=\int V\, \mbox{d}\nu_V$ such that
\begin{equation*}
\int (\mathcal{L}_A - I  + V)(f)\, \mbox{d}\nu_V=\lambda_V \int f\, \mbox{d} \nu_V\,,
\end{equation*}
for any $f\in\mc C$.
As usual, we denote $\gamma_V(x) = 1-V(x)  +\lambda_V.$
Notice that $\int \gamma_V(x) \, \mbox{d} \nu_V(x) = \int (1-V(x)+\lambda_V)\, \mbox{d}\nu_V \,=1\,=\,\int \mathcal{L}_A (1)\, \mbox{d}\nu_V$.

Remember that
$L^{V}(f)(x)=\gamma_{V}(x)\, \sum_{\sigma(y)=x}e^{B_{V}(y)}\big[f(y)-f(x)\big]$,
where
$$B_{V}(y)= A(y)+\log F(y)-\log F(\sigma(y))- \log (1-V(\sigma(x))+\lambda_V),$$
is the infinitesimal generator associated to a semigroup $\{\mc P^{V}_T, \,T\geq 0\}$.

Moreover, for all  $u\in\mc C$ it is true that
$$\int \mc P^{V}_{t} ( u)\, \mbox{d} \mu_{B_{V},\gamma_{V}} \,=\int u \,\mbox{d} \mu_{B_{V},\gamma_{V}},$$
where
$\mbox{d}\mu_{B_{V},\gamma_{V}}(x) =\frac{1}{\gamma_{V}(x)}\,
\frac{\mbox{d}\mu_{B_{V}}(x)}{\int \frac{1}{\gamma_{V}}\, \mbox{d}\mu_{B_{V}}}$.

\begin{lemma}
Suppose $F=F_V>0$ is the main eigenfunction of the operator  $\mathcal{L}_A - I +V$ with eigenvalue $\lambda_V$,
then $\mbox{d}\tilde{\nu}_V(x):=\frac{1}{F(x)}\mbox{d}\mu_{B_{V},\gamma_{V}}(x)=\frac{1}{F(x)}\frac{1}{\gamma_{V}(x)}\,
\frac{\mbox{d}\mu_{B_{V}}(x)}{\int \frac{1}{\gamma_{V}}\, \mbox{d}\mu_{B_{V}}}$ satisfies, for all $g\in \mc C$,
\begin{equation*}
 \int (\mathcal{L}_A - I  + V)(g)\, \mbox{d}\tilde{\nu}_V\,=\,\lambda_V \int g\, \mbox{d} \tilde{\nu}_V\,.
\end{equation*}
Therefore, $\tilde{\nu}_V$ is an eigenprobability for $(\mathcal{L}_A - I  + V)^*$.
Moreover, if we know that the initial stationary probability for $\{\mc P^{V}_{t}=e^{t L^V}, \,t\geq 0\}$ is
unique,
then the eigenprobability is unique.
\end{lemma}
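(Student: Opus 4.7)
The plan is to exploit the explicit formula
\begin{equation*}
L^{V}(f)(x)\;=\;\frac{1}{F(x)}(\mathcal L_A-I+V)(Ff)(x)\,-\,\lambda_V f(x),
\end{equation*}
which was already recorded in (\ref{LV1}) during the proof of Proposition \ref{LV}. The identity is the key bridge between the generator $L^{V}$ (for which $\mu_{B_V,\gamma_V}$ is stationary) and the operator $\mathcal L_A-I+V$ (for which we want $\tilde\nu_V$ to be an eigenmeasure).

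First, given any $g\in\mathcal C$, I would set $f:=g/F$, which is legitimate since $F\in\mathcal C^+$. Plugging into the identity above yields
\begin{equation*}
L^{V}(g/F)(x)\;=\;\frac{1}{F(x)}(\mathcal L_A-I+V)(g)(x)\,-\,\lambda_V\,\frac{g(x)}{F(x)}.
\end{equation*}
Integrating both sides against $\mu_{B_V,\gamma_V}$ and using that $\mu_{B_V,\gamma_V}$ is stationary for $L^{V}$ (so the left-hand side integrates to $0$), one gets
\begin{equation*}
0\;=\;\int(\mathcal L_A-I+V)(g)\,\frac{1}{F}\,\mathrm d\mu_{B_V,\gamma_V}\;-\;\lambda_V\int g\,\frac{1}{F}\,\mathrm d\mu_{B_V,\gamma_V},
\end{equation*}
which is exactly the desired relation $\int(\mathcal L_A-I+V)(g)\,\mathrm d\tilde\nu_V=\lambda_V\int g\,\mathrm d\tilde\nu_V$. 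One also has to check that $\tilde\nu_V$ is a probability; this follows by testing $g=F$ and observing that the normalization constant $\int\frac{1}{F}\,\mathrm d\mu_{B_V,\gamma_V}$ is finite and positive since $F$ is bounded away from $0$ and $\infty$. Actually the definition already divides by the right constant built into $\mu_{B_V,\gamma_V}$, so a short bookkeeping verifies total mass one.

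For the uniqueness clause, I would run the argument in reverse. Suppose $\nu$ is any probability satisfying $(\mathcal L_A-I+V)^{*}\nu=\lambda_V\nu$. Set $\mathrm d\mu:=c\,F\,\mathrm d\nu$, where $c:=(\int F\,\mathrm d\nu)^{-1}$, so that $\mu$ is a probability on $\{1,\dots,d\}^{\mathbb N}$. For any bounded measurable $f$, putting $g:=Ff$ and using the same identity gives
\begin{equation*}
\int L^{V}(f)\,\mathrm d\mu\;=\;c\!\int(\mathcal L_A-I+V)(g)\,\mathrm d\nu\,-\,c\,\lambda_V\!\int g\,\mathrm d\nu\;=\;0.
\end{equation*}
Thus $\mu$ is an initial stationary probability for the semigroup $\{\mathcal P^{V}_t\}$. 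By the assumed uniqueness of such a stationary probability, $\mu=\mu_{B_V,\gamma_V}$, and unwinding the definition of $\mu$ we conclude that $\nu=\tilde\nu_V$.

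The computations are short once the identity from (\ref{LV1}) is in hand; there is no real obstacle beyond being careful with the normalizations (ensuring $\tilde\nu_V$ and the auxiliary $\mu$ are genuine probability measures and that the constants $c$ and $\int\frac{1}{\gamma_V}\,\mathrm d\mu_{B_V}$ match up correctly). The only slightly subtle point is the uniqueness half: one must remember that uniqueness of the \emph{stationary} probability for $\mathcal P^{V}_t$ is postulated as a hypothesis, and this is used exactly once, to conclude that $F\nu/\!\int\! F\,\mathrm d\nu$ coincides with $\mu_{B_V,\gamma_V}$.
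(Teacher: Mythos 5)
Your argument is correct and follows essentially the same route as the paper's: both proofs insert $f = g/F$ into the identity $L^{V}(f) = \tfrac{1}{F}(L+V)(Ff) - \lambda_V f$ from (\ref{LV1}), integrate against $\mu_{B_V,\gamma_V}$, and use its stationarity to cancel the left side; and both handle uniqueness by reversing the substitution to show that $F\nu$ (suitably normalized) must be the stationary measure for $L^V$. The one quibble is your remark that ``the definition already divides by the right constant built into $\mu_{B_V,\gamma_V}$ so total mass is one'' — that is not quite right, since $\int\frac{1}{F}\,\mathrm d\mu_{B_V,\gamma_V}$ need not equal $1$; an extra normalization of $\tilde\nu_V$ is implicitly required, but this is a sloppiness already present in the paper's own statement and harmless for the eigenvalue relation, which is homogeneous in $\tilde\nu_V$.
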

\begin{proof}
It is known that
 \begin{equation*}
 \int L^V(f)\, \mbox{d}\mu_{B_{V},\gamma_{V}}\,=0\,, \quad\forall\, f\in \mc C\,.
\end{equation*}
We can consider an equivalent expression for $L^V(f)$, which is  in \eqref{LV1},
then for any $f\in \mc C$,  we have
$$
\int \pfrac{1}{F}(L+V)(Ff)\, \mbox{d}\mu_{B_{V},\gamma_{V}}= \lambda \,\int f\, \mbox{d}\mu_{B_{V},\gamma_{V}}\,.$$
Denote by $\tilde{\nu}_V$ the measure $\frac{1}{F}\mu_{B_{V},\gamma_{V}}$.
Given a   $g\in\mc C$, take $f=g/F$, thus,
$$
\int (L+V)(g)\,\mbox{d} \tilde{\nu}_V= \lambda \,\int g\,\mbox{d}\tilde{\nu}_V\,.$$
This shows the first claim, that is,  $\tilde{\nu}_V$ is the eigenprobability.
Suppose  that the initial stationary probability, $\mu_{B_{V},\gamma_{V}}$, for $\{e^{\,t \, L^V}, \,t\geq 0\}$ is
unique and $\tilde{\nu}_V $ is the eigenprobability. By hypothesis $F=F_V$ is the unique main eigenfunction for $L+V$.
We can reverse the above argument for the measure  $F_V\, \mbox{d}\tilde{\nu}_V$. Notice that each step is an equivalence. Therefore, one can show that
$$
 \int L^V(f)\,F_V\,\mbox{d}\tilde{\nu}_V \,=0\,, \quad\forall f\in \mc C\,.
$$
From the uniqueness we assumed above, we get $\frac{d\mu_{B_{V},\gamma_{V}}}{d \tilde{\nu}_V }= F_V$.
The final conclusion is that if
the initial stationary probability for the continuous time Markov chain associated to $V$ satisfies $\mu_{B_{V},\gamma_{V}}= F_V\,\nu_V$, then $\tilde{\nu}_V $ is unique.
\end{proof}

\begin{lemma} \label{F>0}
If there exists a function $F\in \mc C^+$ such that $(L +V)F=\lambda_V F$, then the functional acting on $\mc P( \{1,\dots,d\}^{\bb N})$ given by
\begin{equation}\label{Ieh}
I(\nu) :=     - \inf_{u\in\mc C^+}\, \int \frac{L(u)}{u}\,\mbox{d} \nu \geq 0,
\end{equation}
satisfies
$$\lambda_V =\sup_{\nu\in\mc P(\{1,\dots,d\}^{\bb N})}\, \,\Big(\int V \mbox{d} \nu\, -\, I(\nu)\Big)\,.$$
The supremum value above is  achieved on the probability $ \mu_{B_{V},\gamma_{V}}$.
Moreover, if for any Lipschitz $V$ all the above is true, then,
using the Legendre Transform, we obtain
\begin{equation*}
 \begin{split}
  I(\nu)=\sup_{V\in\mc C}\, \Big(\int V\, \mbox{d} \nu\, -\,\lambda_V\Big)=
  \sup_{\at{V\in\mc C}{\text{and}\,\, V\,\,\text{is Lipschitz}} }\Big(\int V\, \mbox{d} \nu\, -\,\lambda_V\Big)\,,
 \end{split}
\end{equation*}
for all $\nu$ probability on $\{1,\dots,d\}^{\bb N}$
and $I(\nu)=\infty$ in any other case.
\end{lemma}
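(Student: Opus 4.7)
My plan has three steps: (i) show $\int V\,d\nu - I(\nu)\le \lambda_V$ for every probability $\nu$, (ii) exhibit $\mu_{B_V,\gamma_V}$ as a maximizer, and (iii) invert by Legendre duality.

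For (i), the eigenrelation $(L+V)F=\lambda_V F$ with $F\in\mc C^+$ rearranges to $V=\lambda_V - L(F)/F$. Integrating against any probability $\nu$ then yields $\int V\,d\nu - \lambda_V = -\int (L(F)/F)\,d\nu \le -\inf_{u\in\mc C^+}\int (L(u)/u)\,d\nu = I(\nu)$, and taking the supremum over $\nu$ gives the desired upper bound. The same formula evaluated at $u\equiv 1$ forces $I(\nu)\ge 0$.

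For (ii), at $\nu=\mu_{B_V,\gamma_V}$ I would show that $u=F$ attains the infimum defining $I$. The change of variables $u=Fv$, $v\in\mc C^+$, combined with $\mathcal{L}_A(F)=\gamma_V F$ and the definition $e^{B_V(y)} = e^{A(y)}F(y)/\mathcal{L}_A(F)(\sigma(y))$, produces the pointwise identity $L(Fv)(x)/(F(x)v(x)) = \gamma_V(x)\,\mathcal{L}_{B_V}(v)(x)/v(x) - 1$. Integrating against $d\mu_{B_V,\gamma_V} = Z^{-1}\gamma_V^{-1}\,d\mu_{B_V}$, where $Z=\int \gamma_V^{-1}\,d\mu_{B_V}$, yields $\int (L(u)/u)\,d\mu_{B_V,\gamma_V} = Z^{-1}\int (\mathcal{L}_{B_V}(v)/v)\,d\mu_{B_V} - 1$. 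The elementary inequality $r-1\ge \log r$ together with the variational identity (Theorem~4 of \cite{artur}) applied to the normalized $B_V$ and its discrete-time equilibrium $\mu_{B_V}$, where $h(\mu_{B_V})+\int B_V\,d\mu_{B_V}=0$, give $\int (\mathcal{L}_{B_V}(v)/v)\,d\mu_{B_V}\ge 1$, with equality at $v\equiv 1$. This forces $I(\mu_{B_V,\gamma_V})=1-Z^{-1}$, attained by $u=F$, and a direct check that $\int V\,d\mu_{B_V,\gamma_V} = \lambda_V + 1 - Z^{-1}$ closes the step.

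For (iii), given a Lipschitz $u\in\mc C^+$ set $W=-L(u)/u$: this is Lipschitz, and $(L+W)u=0$ together with the simplicity of the main eigenvalue in Theorem~\ref{prop2} forces $\lambda_W=0$. Hence $\sup_{V\in\mc C}(\int V\,d\nu - \lambda_V) \ge \sup_{u\text{ Lip},\,>0}(-\int L(u)/u\,d\nu) = I(\nu)$, where the final equality uses boundedness of $L$ on $\mc C$ to extend the supremum from Lipschitz positive $u$ to all $u\in\mc C^+$ by uniform approximation. Combined with (i), this gives $I(\nu) = \sup_V(\int V\,d\nu - \lambda_V)$ for every probability $\nu$. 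For a measure $\nu$ of total mass different from $1$, plugging in $V\equiv c$ gives $\lambda_c = c$ (since $L(1)=0$ and the eigenvalue is simple) and $\int c\,d\nu - c = c(\nu(\{1,\dots,d\}^{\bb N})-1)$, which is unbounded in $c\in\bb R$, so $I(\nu)=+\infty$; a similar constant-$V$ argument excludes signed $\nu$. The main obstacle is step (ii): translating the continuous-time infimum at the stationary measure $\mu_{B_V,\gamma_V}$ into the discrete-time variational formula requires the precise normalization of $B_V$ and careful bookkeeping of the Radon--Nikodym factor $\gamma_V^{-1}/Z$ between $\mu_{B_V}$ and $\mu_{B_V,\gamma_V}$, after which Jensen and the Walters--Ledrappier-type identity of \cite{artur} locate the extremizer at $v\equiv 1$.
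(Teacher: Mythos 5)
Your proof is correct, and steps (i) and (iii) are essentially standard; the interesting divergence is in step (ii). The paper establishes the lower bound $\lambda_V \le \int V\,d\mu_{B_V,\gamma_V} - I(\mu_{B_V,\gamma_V})$ by working at the level of the continuous-time semigroup: it invokes the pointwise identity $L^V(u/F)/(u/F)=L(u)/u+V-\lambda_V$, then Corollary~\ref{corol9} to pass to $\tfrac1t\log\bigl(\mc P^V_t(u/F)/(u/F)\bigr)$, and applies Jensen's inequality for the Markov operator $\mc P^V_t$ together with stationarity of $\mu_{B_V,\gamma_V}$ to see that the integral is nonnegative. You instead reduce entirely to discrete-time thermodynamic formalism: the substitution $u=Fv$ and the identity $\gamma_V(x)F(x)\,\mathcal{L}_{B_V}(v)(x)=\mathcal{L}_A(Fv)(x)$ convert $\int (L(u)/u)\,d\mu_{B_V,\gamma_V}$ into $Z^{-1}\int(\mathcal{L}_{B_V}(v)/v)\,d\mu_{B_V}-1$ with $Z=\int\gamma_V^{-1}\,d\mu_{B_V}$, after which the Walters--Ledrappier identity for the normalized potential $B_V$ (together with $r-1\ge\log r$) locates the infimum at $v\equiv1$, i.e.\ at $u=F$. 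Your route buys an explicit closed form $I(\mu_{B_V,\gamma_V})=1-Z^{-1}$ that the paper's semigroup argument does not surface, and it replaces the informal ``$\sim$'' asymptotics in Corollary~\ref{corol9} by exact algebraic identities; the paper's route stays within the continuous-time framework and avoids invoking the discrete-time variational formula at this point (it uses the same citation from \cite{artur} only in the earlier pressure proposition). For step (iii), where the paper merely gestures at ``a standard Legendre transform procedure,'' you supply the actual argument — $W=-L(u)/u$ has $\lambda_W=0$ by simplicity, and Lipschitz $u$'s are dense among $\mc C^+$ with $L$ bounded — which is the right bookkeeping. One minor point: your remark that ``a similar constant-$V$ argument excludes signed $\nu$'' would need more care if one truly allows signed measures of mass one, but since $\mc M$ here is naturally the cone of positive finite measures the constant-$V$ test is sufficient.
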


\begin{proof}
We follow the reasoning described in Section 4 of \cite{Ki1} adapted to the present case.
First, we show that
\begin{equation}\label{eq.18}
 \lambda_V \geq \sup_{\nu\in\mc P(\{1,\dots,d\}^{\bb N})}\, \,\Big(\int V \mbox{d} \nu\, -\, I(\nu)\Big)\,.
\end{equation}
Let $\nu\in\mc P(\{1,\dots,d\}^{\bb N})$, by definition of the functional $I$, we get
\begin{equation*}
 \int V \,\mbox{d} \nu\, -\, I(\nu)\leq \int V\, \mbox{d} \nu\,+\, \int \frac{L(u)}{u}\,\mbox{d} \nu\,,\quad\forall u\in \mc C^+\,.
\end{equation*}
We will take $u=F$, where $F$ is the eigenfunction of the $P_T^V$,  then we obtain
\begin{equation*}
 \int V \,\mbox{d} \nu\, -\, I(\nu)\leq \int V\, \mbox{d} \nu\,+\, \int \frac{L(F)}{F}\,\mbox{d} \nu\,.
\end{equation*}
Using the equation \eqref{equation}, we can rewrite $\frac{L (F)}{F}$ as $-V+\lambda_V$,
then the inequality follows.

Now, we will show that
\begin{equation}\label{eq.19}
 \lambda_V \leq \sup_{\nu\in\mc P(\{1,\dots,d\}^{\bb N})}\, \,\Big(\int V \mbox{d} \nu\, -\, I(\nu)\Big)\,.
\end{equation}
Actually, we will prove that
$$\lambda_V \leq
\int V \,\mbox{d}\mu_{B_{V},\gamma_{V}} - I(\mu_{B_{V},\gamma_{V}})\,,$$
and this implies the inequality \eqref{eq.19}.

In order to show the above, we consider a general $u\in\mc C^+$.
Recalling the  expression of $L^V$, which is  in Proposition \ref{LV}, we get
\begin{equation*}
 \frac{L^V(u/F)}{u/F}=\frac{L(u)}{u}+V - \lambda_V\,.
\end{equation*}
As the infinitesimal generator of $\mc P^{V}_{t} $ is $L^V$, from  Corollary \ref{corol9}, we have
for all $u\in \mc C^+$ and $t$ small
\begin{equation*}
 \begin{split}
\frac{L^V(u/F)}{u/F}\,\sim\,\frac{1}{t}\log \Big(\frac{  \mc P^{V}_{t} ( u/F)}{u/F}\Big)\,.
 \end{split}
\end{equation*}
Using these two last expressions we get for any $u\in\mc C^+$
$$ \int\Big[\frac{L (u) }{u}+ V - \lambda_V \Big]\,\mbox{d}
\mu_{B_{V},\gamma_{V}}\sim \frac{1}{t}\,\int \log \Big( \frac{\mc P^{V}_{t} (u/F)}{ u/F }\Big)\,\mbox{d} \mu_{B_{V},\gamma_{V}}\,.$$
By Jensen's inequality for any $u\in\mc C^+$ and small $t>0$, we have the right-hand side in the last expression is   bounded from below by
$$ \frac{1}{t} \int \Big[\mc P^{V}_{t} \Big(\log \big(u/F\big)\Big) - \log  \big(u/F \big)\Big]\,\mbox{d} \mu_{B_{V},\gamma_{V}}=0\,.$$
The last equality is due to fact that $\mu_{B_{V},\gamma_{V}}$ is the invariant measure.
Therefore, we take the infimum among all $u\in\mc C^+$ in the above expression, and we get
$$ \inf_{u\in \mc C^+}\, \int\Big[\frac{L (u) }{u}+ V \Big]\,\mbox{d}\mu_{B_{V},\gamma_{V}}\,\geq  \lambda_V \,.$$
Thus, we finish the proof of the inequality \eqref{eq.19}. Consequently, using \eqref{eq.18} and \eqref{eq.19} one can conclude
the statement of the lemma.
The last claim follows from a standard procedure via the classical Legendre transform.

\end{proof}

We point out that indeed is true that for any Lipchitz $V$ there exist $F$ and $\lambda$ as above. Therefore, the conclusion of last result is true in our case (for the corresponding $I$).

\smallskip

In the future we will need the property that for each Lipchitz $V$ the probability which attains the maximal value  $\sup_{\nu\in\mc P(\{1,\dots,d\}^{\bb N})}\, \,\Big(\int V \mbox{d} \nu\, -\, I(\nu)\Big)\,$ is unique. In this direction we consider first  the following lemma.

\begin{lemma} For a fixed Lipschitz $V$ ,
if $\rho$ realizes
$$\lambda_V =
\int V\, \mbox{d} \rho - I(\rho),$$
then
$ (\mc P^{V}_{t})^{*} (\rho)=\rho$,  for all $t\geq 0$.

\end{lemma}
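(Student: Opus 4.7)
The plan is to convert the maximization identity $\lambda_V = \int V\,d\rho - I(\rho)$ into the stationarity condition $\int L^V(g)\,d\rho = 0$ for all continuous $g$, via a perturbation argument at $v\equiv 1$ applied to the infimum defining $I(\rho)$.

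First, I recall the algebraic identity established in the proof of Lemma~\ref{F>0}: for any $u\in\mc C^+$, writing $v = u/F$,
\begin{equation*}
\frac{L^V(v)}{v} \;=\; \frac{L(u)}{u} \;+\; V \;-\; \lambda_V.
\end{equation*}
Since $F\in\mc C^+$, the change of variables $u=Fv$ is a bijection on $\mc C^+$, so
\begin{equation*}
-I(\rho) \;=\; \inf_{u\in\mc C^+}\int \frac{L(u)}{u}\,d\rho \;=\; \inf_{v\in\mc C^+} \int \frac{L^V(v)}{v}\,d\rho \;-\; \int V\,d\rho \;+\; \lambda_V.
\end{equation*}
The hypothesis $\lambda_V = \int V\,d\rho - I(\rho)$ therefore rewrites as
\begin{equation*}
\inf_{v\in\mc C^+}\int \frac{L^V(v)}{v}\,d\rho \;=\; 0.
\end{equation*}

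Since the constant function $v\equiv 1$ already gives value $0$ (because $L^V(1)=0$), the displayed identity says that $v\equiv 1$ is a minimizer of the functional $\Phi(v) := \int \frac{L^V(v)}{v}\,d\rho$ on $\mc C^+$, and that $\Phi(v)\geq 0$ for every $v\in\mc C^+$. Now I would perturb around this minimum: for an arbitrary continuous $g$ and $|\epsilon|$ small enough that $v_\epsilon := 1+\epsilon g > 0$, compute
\begin{equation*}
\frac{L^V(v_\epsilon)}{v_\epsilon} \;=\; \frac{\epsilon L^V(g)}{1+\epsilon g}.
\end{equation*}
Differentiating under the integral in $\epsilon$ (which is legitimate since $L^V$ is a bounded operator and $1/(1+\epsilon g)$ is uniformly bounded for small $|\epsilon|$),
\begin{equation*}
\frac{d}{d\epsilon}\Phi(v_\epsilon)\bigg|_{\epsilon=0} \;=\; \int L^V(g)\,d\rho.
\end{equation*}
Because $\epsilon=0$ is a local minimum of $\epsilon\mapsto \Phi(v_\epsilon)\geq 0 = \Phi(v_0)$ on an open interval about $0$, this derivative must vanish, so $\int L^V(g)\,d\rho=0$ for every $g\in\mc C$.

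Finally, as already noted in Section 2 for the $L$-semigroup, the identity $\int L^V(g)\,d\rho = 0$ for all continuous $g$ propagates to the semigroup: using $\mc P^V_t = e^{tL^V}$ with the bounded generator $L^V$ and the power-series expansion, every term $\int (L^V)^n(g)\,d\rho$ vanishes for $n\geq 1$, hence $\int \mc P^V_t(g)\,d\rho = \int g\,d\rho$ for all $t\geq 0$ and $g\in\mc C$, i.e.\ $(\mc P^V_t)^\ast(\rho)=\rho$. The main subtle point is the perturbation step: one must check that $v_\epsilon$ stays strictly positive and that the functional $\Phi(v_\epsilon)$ is genuinely smooth at $\epsilon=0$ so that the first-order condition is available; boundedness of $L^V$ together with $g\in\mc C$ makes this straightforward.
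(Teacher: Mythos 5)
Your proof is correct and is essentially the same argument the paper uses: both exploit that $\rho$ attains the infimum $\inf_{u\in\mc C^{+}}\int\frac{(L+V-\lambda_V)(u)}{u}\,\mbox{d}\rho=0$ at the eigenfunction $F$, and then apply a first-order perturbation argument at that minimizer to deduce $\int L^{V}(g)\,\mbox{d}\rho=0$ for all continuous $g$. The only (cosmetic) difference is that you first pass to the variable $v=u/F$, so you perturb the constant function $v\equiv 1$ and work directly with $L^{V}$, whereas the paper perturbs $u=F(1+\epsilon f)$ and carries out the same derivative computation in terms of $L+V-\lambda_V$ acting on $Ff$; the two computations are identical after the change of variables.
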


\begin{proof}
By hypothesis, we get
$$ \inf_{u\in \mc C^+}\int\frac{(L +V- \lambda_V )(u) }{u}\,\mbox{d} \rho=0\,.$$
We have to show that for any $f\in\mc C$ it is true
$ \int L^V (f) \, d \rho\,=\,0.$
The inspiration for the main idea of  this proof comes from the reasoning of Sections 2 and 3 in \cite{DV1}
(a little bit different from the last paragraph of the proof of Proposition 3.1 in \cite{Ki2}).

Recalling the definition of the operator $L^V$ given in Proposition \ref{LV}, and using the Corollary \ref{V}, we obtain the next equality
\begin{equation}\label{equf} L^V (f)(x)= \frac{1}{F(x)} \, \sum_{\sigma(y)=x} \, e^{A(y)}\, F(y)\, [\, f(y)-f(x)]=
\frac{\mathcal{L}_A(F\, f) (x)}{F(x)}- \mathcal{L}_A(F) (x)\, \frac{f(x)}{F(x)}\,.\end{equation}

We point out that as $F\in\mc C^+$ is the main eigenfunction of $L+V)$ with eigenvalue $\lambda_V$, then, $F$  realizes the infimum
$$  \inf_{u\in \mc C^+}\int\frac{(L +V- \lambda_V )(u) }{u}\,\mbox{d} \rho=0\,.$$

Given any $f\in\mc C$, take $\epsilon>0$ such that $\epsilon< \frac{1}{c}$, where $\Vert f\Vert_\infty\leq c$.
For this choice of
 $\epsilon$,  observe that  $F\, (1+\epsilon f)\in\mc C^+$. Denoting
$$ G(\epsilon) \,:=\, \int \frac{(L+V-\lambda_V)\,(F\, (1+\epsilon f) ) }{F\, (1+\epsilon f)}\, d \rho\geq 0,$$
we note that $G(\epsilon)$ takes its minimal value at $\epsilon=0.$
Now, taking derivative of $G$ with respect to $\epsilon$ and applying to the value $\epsilon =0$, we get from \eqref{equf}

\begin{equation*}
\begin{split}
0=\,G'(0)\,=\,&\int \frac{(L+V-\lambda_V)\,(F\,f)\, \,\,F}{F^2}\, -  \frac{(F\, f)\, \,\,(L+V-\lambda_V)\,(F)}{F^2}\,  d \rho\\
=\,& \int \frac{(\,( \mathcal{ L}_A - I)+V-\lambda_V)\,(F\,f)\, }{F}\, -  \frac{ f\, (\,( \mathcal{ L}_A - I)+V-\lambda_V)\,(F)}{F}\,  d \rho\\
=\,& \int [\,\frac{ \mathcal{ L}_A (F\,f)\, }{F}-  \frac{f\,  \mathcal{ L}_A (F)\, }{F}\,]\, d \rho=\int L^V (f)\, d \rho\,.
\end{split}
\end{equation*}

\end{proof}

Uniqueness will follow from the next result.

\begin{proposition}  When  $V$ is  Lipschitz function, there is only one $\rho$ which is the
initial stationary probability for the stochastic semigroup $\{\mc P^{V}_{t},\,t\geq 0\}$ generated by
$L^V$.
\end{proposition}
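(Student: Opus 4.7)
The plan is to reduce uniqueness in the continuous time setting to the classical discrete time uniqueness of the fixed probability of a Ruelle operator associated to a normalized Lipschitz potential. This is natural because the operator $L^V$ already factors as $L^V = \gamma_V\,(\mc L_{B_V} - I)$, where $B_V$ is normalized and Lipschitz (as verified right after its definition, since $\sum_{\sigma(y)=x} e^{B_V(y)} = \frac{\mc L_A(F_V)(x)}{\gamma_V(x) F_V(x)} = 1$) and $\gamma_V$ is a strictly positive continuous function (bounded below by a positive constant since the state space is compact and $\gamma_V = 1 - V + \lambda_V$ is a continuous strictly positive function, as remarked before the definition of $B_V$).

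First, I would take any probability $\rho$ on $\{1,\dots,d\}^{\bb N}$ which is stationary for $\{\mc P^V_t,\,t\geq 0\}$. By the correspondence between the stochastic semigroup and its infinitesimal generator, this is equivalent to
\begin{equation*}
\int L^V(f)\,\mbox{d}\rho = 0,\qquad \forall f\in\mc C.
\end{equation*}
Substituting the factorization $L^V(f) = \gamma_V\,(\mc L_{B_V}(f) - f)$ and rearranging gives
\begin{equation*}
\int \mc L_{B_V}(f)\,\gamma_V\,\mbox{d}\rho \,=\, \int f\,\gamma_V\,\mbox{d}\rho,\qquad \forall f\in\mc C.
\end{equation*}

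Next, since $\gamma_V > c > 0$, the positive measure $\gamma_V\,\mbox{d}\rho$ is finite and nonzero; normalizing, the probability
\begin{equation*}
\mbox{d}\tilde{\nu}(x) \,=\, \frac{\gamma_V(x)\,\mbox{d}\rho(x)}{\int \gamma_V\,\mbox{d}\rho}
\end{equation*}
satisfies $\mc L_{B_V}^*(\tilde{\nu}) = \tilde{\nu}$. Since $B_V$ is a normalized Lipschitz potential, the classical (discrete time) Perron--Frobenius theorem for Ruelle operators (as recalled in the introduction, following \cite{PP}) guarantees a unique such fixed probability, namely $\mu_{B_V}$. Hence $\tilde{\nu} = \mu_{B_V}$, which forces
\begin{equation*}
\mbox{d}\rho(x) \,=\, \frac{1}{\gamma_V(x)}\,\frac{\mbox{d}\mu_{B_V}(x)}{\int \frac{1}{\gamma_V}\,\mbox{d}\mu_{B_V}} \,=\, \mbox{d}\mu_{B_V,\gamma_V}(x),
\end{equation*}
recovering precisely the stationary measure identified in \eqref{muVgamma}.

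There is no serious obstacle: the only nontrivial ingredient is the strict positivity and Lipschitz regularity of the ingredients needed to apply the discrete time result, which have already been established. The slightly delicate step is the passage from stationarity of $\rho$ under the semigroup to the equation $\int L^V(f)\,d\rho = 0$ for \emph{all} continuous $f$, but this is standard since $L^V$ is a bounded operator on $\mc C$ with $\mc P^V_t = e^{tL^V}$, so for any continuous $f$ one has $\frac{1}{t}\int(\mc P^V_t f - f)\,d\rho \to \int L^V(f)\,d\rho$ uniformly, and the left-hand side is identically zero by stationarity.
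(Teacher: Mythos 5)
Your proof is correct and follows essentially the same route as the paper: both reduce the claim to the classical uniqueness of the fixed probability for $\mc L_{B_V}^*$ by observing that stationarity of $\rho$ for $L^V$ forces the normalization of $\gamma_V\,\mbox{d}\rho$ to be that fixed probability. The only cosmetic difference is that you invoke the factorization $L^V=\gamma_V(\mc L_{B_V}-I)$ directly, whereas the paper unwinds the equivalent form $L^V(f)=\frac{1}{F}(\mc L_A-I+V)(Ff)-\lambda_V f$ to arrive at the same identity.
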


\begin{proof}
Suppose $\rho$ is such that
for all $t\geq 0$, we have
$ (\mc P^{V}_{t})^{*} (\rho)=\rho.$
This means that for any $f\in\mc C$, we have
$$ \int \mc P^{V}_{t}(f )\, \mbox{d} \rho= \int f\, \mbox{d} \rho\,.$$
This implies that $\int L^V (f)\, \mbox{d} \rho=0$, for any $f\in\mc C$. Using the expression \eqref{LV1} for $L^V$, the last integral becomes
$$ \int\Big[ \pfrac{1}{F}( \mathcal{L}_A-I+V)(F\,f)-\lambda_V\,f \Big]\, \mbox{d} \rho\, =\,0\,,$$
 for any $f\in\mc C$,
which is equivalent to
\begin{equation*}
 \begin{split}
  &\int f\,(1-V+ \lambda_V)\, \mbox{d} \rho\,=\,\int \frac{\mathcal{L}_A (Ff)}{F} \, \mbox{d} \rho=
  \int \frac{\mathcal{L}_A (Ff)}{F}\, \frac{1}{1-V+ \lambda_V}   \, (1-V+ \lambda_V)\, \mbox{d} \rho\,.
 \end{split}
\end{equation*}
We point out that it is known that $1-V+ \lambda_V$ is strictly positive. Consider
$B_V(y)=A(y)- \,\log [  1- V(\sigma(y))+\lambda]+ \log F(y) - \log F(\sigma(y))$ and consider the following Ruelle operator
$$f\,\,\to\,\,\mathcal{L}_{B_V}(f)(x)\,\,=\,\, \sum_{\sigma(y)=x} \, e^{B_V(y)}\, f(y)\,,$$
which satisfies $\mathcal{L}_{B_V}(1)=1.$ From classical results in thermodynamic formalism there is a unique $\tilde{\mu}$ such that
$\mathcal{L}_{B_V}^*(\tilde{\mu})=\tilde{\mu}.$ We will show that $ \mbox{d} \tilde{\mu}= (1-V+ \lambda_V)\,\, \mbox{d} \rho\,.$
Indeed, $\mathcal{L}_{B_V}^*(\tilde{\mu})=\tilde{\mu}$ means that for any $f$, we have
\begin{equation*}
 \begin{split}
  &\int \, f\, \mbox{d}\tilde{\mu}=
\int\sum_{\sigma(y)=x} \, e^{A(y)- \,\log [  1- V(\sigma(y))+\lambda]+ \log F(y) - \log F(\sigma(y))}\, f(y)\, \mbox{d}\tilde{\mu}(x)
=\int \frac{\mathcal{L}_A (Ff)}{F}\, \frac{1}{1-V+ \lambda_V}   \, \mbox{d} \tilde{\mu}\, ,
 \end{split}
\end{equation*}
for all $f\in \mc C$.
Since $\tilde{\mu}$ is unique, we get that $\rho$ is unique.
\end{proof}
The next theorem follows easily from  the last two results.

\begin{theorem}\label{teo20} For a fixed Lipschitz function $V$,
there is a unique  $\rho$ which realizes
$$\lambda_V =
\int V\, \mbox{d} \rho - I(\rho)\,.$$
Moreover, $\rho= \mu_{B_{V},\gamma_{V}}$, which is the   initial stationary probability for $L^V$, and
the measure $\bb P^V_{\mu_{B_{V},\gamma_{V}}}$ is  invariant for the continuous time semiflow  $\{\Theta_t, \,t\geq 0\}$ on the Skorohod space.

\end{theorem}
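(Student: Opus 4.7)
The plan is to chain together the three preceding results in a compact way. The key observation is that Lemma~\ref{F>0} already produced a maximizer, namely $\mu_{B_V,\gamma_V}$, with supremum value $\lambda_V$, so the only new content in Theorem~\ref{teo20} is (i) the uniqueness of the maximizer and (ii) the invariance statement on the Skorohod space. For (i) I would argue by the following short path: any $\rho$ realizing $\lambda_V = \int V\,d\rho - I(\rho)$ must, by the lemma giving $(\mc P^{V}_{t})^{*}(\rho)=\rho$ for all $t\geq 0$, be an initial stationary probability for the stochastic semigroup $\{\mc P^{V}_{t},\,t\geq 0\}$. Then the proposition on uniqueness of the initial stationary probability forces $\rho = \mu_{B_V,\gamma_V}$.

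For (ii) I would argue as follows. The candidate $\mu_{B_V,\gamma_V}$ actually does achieve the supremum because Lemma~\ref{F>0} established $\lambda_V = \sup_{\nu}\big(\int V\,d\nu - I(\nu)\big)$ and recorded explicitly that the value is attained on $\mu_{B_V,\gamma_V}$. Combining this with (i) yields both existence and uniqueness of the maximizer, with $\rho = \mu_{B_V,\gamma_V}$.

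To prove that $\bb P^V_{\mu_{B_V,\gamma_V}}$ is invariant under $\{\Theta_t,\,t\geq 0\}$ on $\mc D$, I would invoke the standard Markov-chain fact: if $\mu$ is a stationary initial measure for a stochastic semigroup $\{Q_t\}$ on the state space, then for every Borel cylinder set in path space the finite-dimensional distributions of the induced path measure are invariant under time translation, and therefore the whole path measure is invariant under the time-shift semiflow. Concretely, for $0\leq t_1<\dots<t_k$ and bounded measurable $f_1,\dots,f_k$, the identity
\begin{equation*}
\int f_1(\omega_{t_1+s})\cdots f_k(\omega_{t_k+s})\,d\bb P^V_{\mu_{B_V,\gamma_V}}(\omega)
= \int f_1(\omega_{t_1})\cdots f_k(\omega_{t_k})\,d\bb P^V_{\mu_{B_V,\gamma_V}}(\omega)
\end{equation*}
follows from stationarity of $\mu_{B_V,\gamma_V}$ under $\mc P^{V}_{t}$ together with the Markov property expanded via the semigroup.

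The main obstacle is really only the identification step in (i): one must be sure that the lemma providing $(\mc P^{V}_{t})^{*}(\rho)=\rho$ applies to \emph{every} maximizer and not merely the candidate $\mu_{B_V,\gamma_V}$, and dually that the uniqueness proposition, whose proof used the normalized potential $B_V$ and the classical uniqueness of the $\sigma$-invariant fixed point of $\mc L_{B_V}^*$, genuinely covers all probabilities fixed by $(\mc P^V_t)^*$ for all $t\geq 0$. Both are available from the statements as given, so the proof reduces to a clean assembly with no additional estimates.
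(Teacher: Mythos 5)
Your proposal is correct and follows essentially the same path the paper intends: the paper only records that the theorem ``follows easily from the last two results,'' and your assembly (Lemma~\ref{F>0} for attainment, the preceding lemma to show any maximizer is $(\mc P^V_t)^*$-fixed, the uniqueness proposition to identify it with $\mu_{B_V,\gamma_V}$, plus the standard stationarity-implies-shift-invariance argument on $\mc D$) is exactly the intended chain.
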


We consider now some general statements that  will be necessary in the next section.

In the case that there exists the eigenfunction $F$, it is possible to show that
\begin{equation*}
 \lim_{T \to \infty}\, \pfrac{1}{T}\,\log\int_{\mc D} e^{\int_0^{T} V(\omega_r)\,dr} \, \mbox{d}\bb P_{x}(\omega) = \lambda_{V}\,,
\end{equation*}
for all $x\in \{1,\dots,d\}^{\bb N}$. Indeed,
$
\log\int_{\mc D} e^{\int_0^{T} V(\omega_r)\,dr} \, \mbox{d}\bb P_{x}(\omega) $ can be written as
$\lambda_{V} T+\log\Big(F(x)\frac{P_T^V(1)(x)}{P_T^V(F)(x)}\Big)$.
Since the eigenfunction $F$ is strictly positive on a compact set,
the second term in the last sum is bounded above and below by constants that depend only on $F$. This proves the desired limit.

In a similar way as above, we obtain
\begin{equation}\label{lim}
  \lim_{T \to \infty}\, \pfrac{1}{T}\,\log\int_{\mc D} e^{\int_0^{T} V(\omega_r)\,dr} \, \mbox{d}\bb P_{\mu_A} (\omega)\,=\lambda_{V}.
\end{equation}

Remember that
the value $\lambda_V$ was obtained from $V$ as the one such that
$ (\mathcal{L}_A - I  + V)^* \nu_{V}=\lambda_V \nu_{ V}$, with
$\lambda_V=\int V\,\mbox{d}\nu_{ V}$, see the Subsection  \ref{eigenprobability}.

We consider below a general continuous  potential $V$.

\begin{lemma} \label{Vi}
 For all continuous function $V:\{1,\dots,d\}^{\bb N}\to\bb R$, there exists the limit
 \begin{equation*}
  \lim_{T \to \infty}\, \pfrac{1}{T}\,\log\int\int_{\mc D} e^{\int_0^{T} V(\omega_r)\,dr} \, \mbox{d} \bb P_{x}(\omega)\,\mbox{d} \mu_A(x)
  \,=\,\lim_{T \to \infty}\, \pfrac{1}{T}\,\log\int P_{T}^V(1)(x) \,\mbox{d} \mu_A(x)\,.
 \end{equation*}
We will denote this limit by $ Q(V)$.
\end{lemma}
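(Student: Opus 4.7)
The equality of the two displayed expressions is just Fubini combined with the definition of $\bb P_{\mu_A}$ and with \eqref{0}: since $\bb P_{\mu_A}[\mc K]=\int \bb P_x[\mc K]\,\mathrm{d}\mu_A(x)$, we have $\int P_T^V(1)(x)\,\mathrm{d}\mu_A(x)=\int_{\mc D}e^{\int_0^T V(\omega_r)\,\mathrm{d}r}\,\mathrm{d}\bb P_{\mu_A}(\omega)$, so only the existence of the limit needs proof. The plan is to first handle the Lipschitz case by invoking what the paper already established, and then to upgrade to arbitrary continuous $V$ by uniform approximation.

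For $V$ Lipschitz the limit has already been identified in \eqref{lim} as $\lambda_V$; this handles the core case with no new work. For a merely continuous $V$, choose a sequence $V_n$ of Lipschitz functions with $\|V-V_n\|_\infty\to 0$ (e.g.\ Lipschitz functions that depend on only finitely many coordinates, which are uniformly dense in $\mc C(\{1,\dots,d\}^{\bb N})$). Pathwise,
$$
e^{-T\|V-V_n\|_\infty}\,e^{\int_0^T V_n(\omega_r)\,\mathrm{d}r}\;\le\; e^{\int_0^T V(\omega_r)\,\mathrm{d}r}\;\le\; e^{T\|V-V_n\|_\infty}\,e^{\int_0^T V_n(\omega_r)\,\mathrm{d}r},
$$
so integrating against $\bb P_{\mu_A}$, taking $\frac{1}{T}\log$, and applying \eqref{lim} to $V_n$ yields
$$
\lambda_{V_n}-\|V-V_n\|_\infty\;\le\;\liminf_{T\to\infty}\tfrac{1}{T}\log\!\int P_T^V(1)\,\mathrm{d}\mu_A\;\le\;\limsup_{T\to\infty}\tfrac{1}{T}\log\!\int P_T^V(1)\,\mathrm{d}\mu_A\;\le\;\lambda_{V_n}+\|V-V_n\|_\infty.
$$

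It thus remains to show that $\{\lambda_{V_n}\}$ is Cauchy. For this I will use the variational formula from Lemma \ref{F>0}: for Lipschitz $W$, $\lambda_W=\sup_{\nu}\bigl(\int W\,\mathrm{d}\nu-I(\nu)\bigr)$, and this sup is over probability measures, hence as a supremum of $\nu$-affine functionals it is $1$-Lipschitz in $W$ for the sup norm, giving $|\lambda_{V_n}-\lambda_{V_m}|\le\|V_n-V_m\|_\infty$. Therefore $\lambda_{V_n}$ converges; combined with the two-sided bound above this forces the limit $Q(V):=\lim_{T\to\infty}\frac{1}{T}\log\int P_T^V(1)\,\mathrm{d}\mu_A$ to exist (and it equals $\lim_n\lambda_{V_n}$, which is independent of the approximating sequence).

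The step I expect to require the most care is not a calculation but a bookkeeping point: I must make sure that Lemma \ref{F>0} and equation \eqref{lim} (both proved only under the Lipschitz hypothesis) are genuinely available for the approximants $V_n$, and that the approximation $V_n\to V$ can indeed be taken in the uniform norm inside the class of Lipschitz functions on $\{1,\dots,d\}^{\bb N}$; the latter is routine because cylindrical (hence Lipschitz) functions are uniformly dense in $\mc C(\{1,\dots,d\}^{\bb N})$ by Stone--Weierstrass.
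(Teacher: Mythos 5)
Your proof is correct, but it takes a genuinely different route from the paper. The paper establishes the lemma directly for all continuous $V$ by a subadditivity argument: using the semigroup property one bounds $\int P^V_{T+S}(1)\,\mathrm d\mu_A$ by a product involving $\int P^V_T(1)\,\mathrm d\mu_A$ and $\int P^V_S(1)\,\mathrm d\mu_A$, so that $T\mapsto\log\int P^V_T(1)\,\mathrm d\mu_A$ is subadditive and Fekete's lemma gives the limit. You instead exploit the fact that the Lipschitz case is already settled by \eqref{lim} (and that Lipschitz functions are uniformly dense, e.g.\ cylindrical functions by Stone--Weierstrass) and close the gap by the pathwise sandwich $e^{-T\|V-V_n\|_\infty}e^{\int_0^T V_n}\le e^{\int_0^T V}\le e^{T\|V-V_n\|_\infty}e^{\int_0^T V_n}$. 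This is sound, and in fact your two-sided inequality already yields $\limsup-\liminf\le 2\|V-V_n\|_\infty\to 0$, so the limit exists; the additional Cauchy argument for $\{\lambda_{V_n}\}$ via the variational formula of Lemma \ref{F>0} is not needed for existence, though it is correct and does identify $Q(V)=\lim_n\lambda_{V_n}$ and recovers the $1$-Lipschitz estimate $|Q(V)-Q(U)|\le\|V-U\|_\infty$, which the paper proves separately in Appendix~\ref{apQV} only after existence has been established. The trade-off: the paper's subadditivity proof is self-contained and does not lean on the Perron--Frobenius machinery, while your approach is arguably cleaner given what has already been proved and simultaneously yields the continuity of $V\mapsto Q(V)$ for free.
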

\begin{proof}
 Notice that, for all $x\in\{1,\dots,d\}^{\bb N}$ and $T,S\geq 0$, it is true that
 \begin{equation*}
 \begin{split}
   &P_{T+S}^V(1)(x)\,=\,P_{T}^V\big(P_{S}^V(1)\big)(x)\,
  =\,\bb E_x\Big[ e^{\int_0^{T} V(X_r)\,dr}P_{S}^V(1)(X_T)\Big]\\
  &\leq\,
  \int P_{S}^V(1)(x) \,\mbox{d} \mu_A(x)\;\; \bb E_x\Big[ e^{\int_0^{T} V(X_r)\,dr}\Big]
  \leq\,
  \int P_{S}^V(1)(x) \,\mbox{d} \mu_A(x) \;\;
  P_{T}^V(1)(x) \,,
 \end{split}
 \end{equation*}
 $\mu_A$-a.s. in $x$.
Then the limit in the statement of this lemma follows by subadditivity.
\end{proof}

The above result is related to questions raised in (4.3) in \cite{Ki1} and (4.2.21) in \cite{DS}.
\bigskip

From the above we get the next lemma.

\begin{lemma}\label{Q(V)}
For any Lipschitz function $V$, we have  $Q(V)= \lambda_V.$
\end{lemma}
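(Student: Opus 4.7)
The plan is to show both quantities $Q(V)$ and $\lambda_V$ coincide by exhibiting two-sided bounds on $P_T^V(1)$ coming from the eigenfunction $F=F_V$.

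First I would invoke Theorem \ref{prop2}: since $V$ is Lipschitz, there is a strictly positive Lipschitz eigenfunction $F\colon\{1,\dots,d\}^{\bb N}\to(0,+\infty)$ with $P_T^V(F)=e^{\lambda_V T}F$ for all $T\geq 0$. Because $\{1,\dots,d\}^{\bb N}$ is compact and $F$ is continuous and strictly positive, there exist constants $0<c\leq C<\infty$ such that $c\leq F(x)\leq C$ for all $x$. By linearity and positivity of the operator $P_T^V$ (it has a nonnegative kernel coming from the Feynman--Kac formula \eqref{0}), the bounds $c\cdot 1\leq F\leq C\cdot 1$ yield
\begin{equation*}
c\,P_T^V(1)(x)\ \leq\ P_T^V(F)(x)\ =\ e^{\lambda_V T}F(x)\ \leq\ C\,P_T^V(1)(x),
\end{equation*}
so that
\begin{equation*}
\frac{e^{\lambda_V T}F(x)}{C}\ \leq\ P_T^V(1)(x)\ \leq\ \frac{e^{\lambda_V T}F(x)}{c}.
\end{equation*}

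Next I integrate against $\mu_A$. Since $\int F\,d\mu_A$ is a strictly positive finite constant (as $c\leq F\leq C$), this gives
\begin{equation*}
\frac{e^{\lambda_V T}\int F\,d\mu_A}{C}\ \leq\ \int P_T^V(1)(x)\,d\mu_A(x)\ \leq\ \frac{e^{\lambda_V T}\int F\,d\mu_A}{c}.
\end{equation*}
Taking $\frac{1}{T}\log$ and letting $T\to\infty$, the additive contributions of $\log(\int F\,d\mu_A/C)$ and $\log(\int F\,d\mu_A/c)$ vanish, and the squeeze forces
\begin{equation*}
Q(V)\ =\ \lim_{T\to\infty}\frac{1}{T}\log\int P_T^V(1)\,d\mu_A\ =\ \lambda_V,
\end{equation*}
which is exactly the identity to be proved. (Equivalently, one can observe that by Fubini $\int P_T^V(1)\,d\mu_A = \int_{\mc D} e^{\int_0^T V(\omega_r)\,dr}\,d\bb P_{\mu_A}(\omega)$, and then quote \eqref{lim} directly; the argument above is precisely the justification of \eqref{lim} in the Lipschitz case.)

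There is no real obstacle: the only nontrivial input is the existence of a strictly positive Lipschitz eigenfunction provided by Theorem \ref{prop2}, and from that point everything reduces to compactness of $\{1,\dots,d\}^{\bb N}$ forcing $F$ to be bounded away from $0$ and $\infty$, so that the eigenvalue identity $P_T^V(F)=e^{\lambda_V T}F$ passes to the constant function $1$ up to bounded multiplicative error.
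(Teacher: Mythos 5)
Your proposal is correct and takes essentially the same approach as the paper: the paper likewise uses the strictly positive eigenfunction $F$ from Theorem \ref{prop2} together with compactness of $\{1,\dots,d\}^{\bb N}$ to sandwich $P_T^V(1)$ between multiplicative constants times $e^{\lambda_V T}F$, and then takes $\frac{1}{T}\log$; your version simply spells out the positivity bound $c\,P_T^V(1)\leq P_T^V(F)\leq C\,P_T^V(1)$ more explicitly before integrating against $\mu_A$.
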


We will show several properties of $Q(V)$ in Appendix \ref{apQV}.
More precisely, we show that in our setting the expressions (2.1) and (2.2) in \cite{Ki1} are true.

\subsection{Large deviations}
In this subsection we will apply to our setting the general results stated in \cite{Ki1}.
The purpose of this subsection is to show that the large deviation principle at the level two (see (1.1) and (1.2)
in \cite{Ki1}) is true for the{\it{\emph{ a priori }process}}.
We will have to show that the hypothesis of Theorem 2.1 in \cite{Ki1} is  true in our setting.
General references for large deviations are  \cite{DZ}, \cite{DS}, \cite{dH}, \cite{Ellis}, \cite{FK} and \cite{Leo}.

First, we will present the sequence of definitions and statements of \cite{Ki1} in the particular case of our setting.
Recalling that $\{X_t,\,t\geq 0\}$ denotes  the\emph{ a priori } continuous time stochastic process with infinitesimal generator $L=\mathcal{L}_A -I$
and initial probability $\mu_A$. We denote by $\bb P_{\mu_{A}}$ the probability on the Skorohod space $\mc D$ associated to such
stationary process.
We will begin with the  occupational time for  $\{X_t, t\geq 0\}$.
Define, for all $t\geq 0$, $\omega \in \mc D$ and for any Borel subset $\Gamma$ of the $\{1,\dots,d\}^{\bb N}$,
\begin{equation*}
 L_t^\omega(\Gamma)=\frac{1}{t}\int_0^t \textbf{1}_{\Gamma}(X_s(\omega))\,\mbox{d}s\,.
\end{equation*}
Observe that for $t$ and $\omega$ fixed we have  $L_t^\omega$ is a measure on $\{1,\dots,d\}^{\bb N}$
and it is called \textit{empirical measure}.
Moreover, if we consider the canonical version of the process $\{X_t, t\geq 0\}$, we can rewrite the expression above above as
\begin{equation*}
 \int_{\{1,\dots,d\}^{\bb N}} \textbf{1}_{\Gamma}(y)\,L_t^\omega(dy)=\frac{1}{t}\int_0^t \textbf{1}_{\Gamma}(\omega_s)\,\mbox{d}s\,.
\end{equation*}

Fixing $t\geq 0$ and $\omega\in\mc D$, using  the fact that
$ L_t^\omega$ is a measure on $\{1,\dots,d\}^{\bb N}$, moreover, using the expression above and usual arguments for approximating
bounded (or positive) functions, we have
\begin{equation}\label{eq.21}
 \begin{split}
\int_{\{1,\dots,d\}^{\bb N}} f(y)\,L_t^\omega(dy)\,=\,\frac{1}{t}\int_0^t f(\omega_s)\,\mbox{d}s\,,
\end{split}
\end{equation}
for all $f:\{1,\dots,d\}^{\bb N}\to \bb R$ bounded (or positive) mensurable function.

Finally, by the Ergodic Theorem, for any $f\in\mc C^+$, we have
$$\lim_{t \to \infty}   \int_{\{1,\dots,d\}^{\bb N}} f(y)\,L_t^\omega(dy)\,=
\, \int_{\{1,\dots,d\}^{\bb N}} f(y)\,\mu_A(dy)\,,\quad\bb  P_{\mu_{A}}-\mbox{almost surelly in }\,\, \omega\,,$$
in other words, $\lim_{t\to\infty}L_t^\omega=\mu_A$, $\bb  P_{\mu_{A}}$-almost surely in $\omega$, in the sense of weak convergence of measures.
Since the measure $L_t^\omega$ is random, there is some deviation to this convergence. We will study now the
rate of convergence.
In order to do it, we will prove the large deviation principle at level two for the{\it{\emph{ a priori }process}} $\{X_t,\,t\geq 0\}$.
We say there exists a large deviation principle at level two, if there exists a lower semicontinuous functional $I$, defined on
$\mathcal{M} (\{1,\dots,d\}^{\bb N})$,
such that:
{\it\begin{itemize}
 \item[i)] for any closed set $K\subset \mathcal{M} (\{1,\dots,d\}^{\bb N})$
$$ \limsup_{t \to \infty} \frac{1}{t} \, \log   \bb  P_{\mu_{A}}\big[ L_t \in K\big]\leq \,\,-\,\inf_{  \nu\in K} I(\nu)\,,$$

\item[ii)] for any open set $G\subset \mathcal{M} (\{1,\dots,d\}^{\bb N})$
$$ \liminf_{t \to \infty} \frac{1}{t} \, \log   \bb  P_{\mu_{A}}\big[ L_t\in G\big]\geq \,\,-\,\inf_{  \nu\in G} I(\nu)\,.$$
\end{itemize}}
We call $I$ the deviation function, or the rate function.

 \bigskip

In order to prove the result above, we observe that by the equality \eqref{eq.21}, we get
\begin{equation*}
 \begin{split}
e^{\,\,t\int_{\{1,\dots,d\}^{\bb N}} f(y)\,L_t^\omega(dy)}\,=\,e^{\,\int_0^t f(\omega_s)\,\mbox{d}s}\,,
\end{split}
\end{equation*}
for all $t\geq 0$,  $\omega\in\mc D$ and $f:\{1,\dots,d\}^{\bb N}\to \bb R$ bounded (or positive) mensurable function.
Then, we integrate both sides of the equality above concerning $\bb P_x$, and  we obtain
\begin{equation*}
 \begin{split}
\int_{\mc D}e^{\,\,t\int_{\{1,\dots,d\}^{\bb N}} f(y)\,L_t^\omega(dy)}\,\mbox{d}\bb P_x(\omega)\,=
\,\int_{\mc D}e^{\int_0^t f(\omega_s)\,\mbox{d}s}\,\mbox{d}\bb P_x(\omega)\,,
\end{split}
\end{equation*}
for all $t\geq 0$ and $f:\{1,\dots,d\}^{\bb N}\to \bb R$ bounded (or positive) mensurable function.
We recall that $\bb P_x$ is a probability on $\mc D$ induced by the
initial measure $\delta_x$ and the Markov process
$\{X_t;\, t\geq 0\}$.

Using   Lemma \ref{Q(V)}, and \eqref{lim} in  the previous section, and the last fact, we have
\begin{equation*}
 \begin{split}
  Q(V)&= \lambda_V=     \lim_{T \to \infty}\, \frac{1}{T}\,\log \int_{\mc D} e^{\int_0^{T} V(\omega_r)\,dr} \, \mbox{d} \bb  P_{\mu_{A}}(\omega) \\
&= \lim_{T \to \infty}\, \frac{1}{T}\,\log \int_{\mc D} e^{\,\,T\int_{\{1,\dots,d\}^{\bb N}} V(y)\,L_T^\omega(dy)}\, \mbox{d} \bb  P_{\mu_{A}}(\omega) .
 \end{split}
\end{equation*}
This shows that $Q(V)$ is the same one given in (1.3) of \cite{Ki1}, then this will allow us to find the functional rate $I$.
%
 From the general setting of \cite{Ki1} (there is no mention of eigenvalue in the below expression), we get
\begin{equation*}
0\leq I(\nu) =
\sup_{V\in \mc C}\, \,\Big(\int V \,\mbox{d} \nu\, -\, Q(V)\Big)
=\sup_{\at{V\in\mc C}{\text{and}\,\, V\,\,\text{is Lipschitz}} }\, \Big(\int V \,\mbox{d} \nu\, -\, Q(V)\Big)\,,
\end{equation*}
for any $\nu$ on $\mc P(\{1,\dots,d\}^{\bb N}) $ and $I(\mu)=\infty$ for all other $\mu\in \mc M(\{1,\dots,d\}^{\bb N})$.
We point out that the above expression for $I$ is in agreement with the one in Lemma \ref{F>0} by Lemma \ref{Q(V)}.
Since  the dual space of $\mc C$ is the space  $\mc M(\{1,\dots,d\}^{\bb N})$, we have
$$ Q(V)= \sup_{ \mu \in \mathcal{P}(\{1,\dots,d\}^{\bb N})}\, \Big(\int V \, \mbox{d} \mu - I(\mu)\Big)\,.$$

Following \cite{Ki1} we say that $\mu_V\in \mathcal{P} ( \{1,\dots,d\}^{\bb N} ) $ is an {\textit{equilibrium state for $V$}}, if
$$ Q(V)=  \int V \, \mbox{d} \mu_V - I(\mu_V)\,.$$


A  major result in the theory  is Theorem 2.1 in \cite{Ki1}. We will state a particular version of this result in Theorem \ref{teor23}.

\begin{theorem}\label{teor23} If for each Lipschitz function $V: \{1,\dots,d\}^{\bb N}  \to \mathbb{R}$ the equilibrium state $\mu_V$ is unique, then, the large deviation principle at  level two
is true with the deviation function
$$I(\nu) =\sup_{V\in \mc C}\, \,\Big(\int V \,\mbox{d} \nu\, -\, Q(V)\Big)\,.$$

\end{theorem}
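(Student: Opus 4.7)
My plan is to verify the hypotheses of the abstract Donsker--Varadhan--Kifer theorem (Theorem 2.1 in \cite{Ki1}) in our continuous time Bernoulli setting, using pieces already assembled: the limit $Q(V)$ exists for every $V\in\mc C$ by Lemma \ref{Vi}; for Lipschitz $V$ it coincides with the eigenvalue $\lambda_V$ by Lemma \ref{Q(V)}; and by Theorem \ref{teo20} the equilibrium state $\mu_V=\mu_{B_V,\gamma_V}$ is unique for every Lipschitz $V$, which is precisely the standing hypothesis. Convexity of $V\mapsto Q(V)$ on $\mc C$ follows from H\"older's inequality applied to $\int P_T^V(1)\,d\mu_A$, while $|Q(V)-Q(W)|\leq\|V-W\|_\infty$ is immediate from the integral representation in Lemma \ref{Vi}, so $I$ is a bona fide convex lower semicontinuous rate function on $\mc P(\{1,\dots,d\}^{\bb N})$ and the displayed formula really is the Legendre dual of $Q$.

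For the upper bound, compactness of $\{1,\dots,d\}^{\bb N}$ makes $\mc P(\{1,\dots,d\}^{\bb N})$ weak-$*$ compact, so exponential tightness is automatic and it suffices to treat closed $K$. Combining \eqref{eq.21} with the exponential Chebyshev inequality, for every $V\in\mc C$ one has
\begin{equation*}
\bb P_{\mu_A}[L_t\in K]\,\leq\,e^{-t\inf_{\nu\in K}\int V\,d\nu}\,\int_{\mc D}e^{\int_0^t V(\omega_s)\,ds}\,d\bb P_{\mu_A}(\omega),
\end{equation*}
so $\limsup_t t^{-1}\log\bb P_{\mu_A}[L_t\in K]\leq -\inf_{\nu\in K}\int V\,d\nu+Q(V)$. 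The functional $(V,\nu)\mapsto \int V\,d\nu-Q(V)$ is affine in $\nu$ and concave in $V$, and $K$ is weak-$*$ compact, so Sion's minimax theorem permits exchanging $\sup_V$ with $\inf_{\nu\in K}$, yielding $\limsup\leq -\inf_{\nu\in K}I(\nu)$.

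The lower bound is the delicate step and the one where the uniqueness hypothesis genuinely intervenes. Given an open $G$ and a target $\nu_0\in G\cap\mc P(\{1,\dots,d\}^{\bb N})$ with $I(\nu_0)<\infty$, the idea is to approximate $\nu_0$ inside $G$ by an equilibrium state $\mu_{B_{V_0},\gamma_{V_0}}$ for some Lipschitz $V_0$ and use the Gibbs probability $\bb P^{V_0}$ of Section 3 as a tilted reference. The Radon--Nikodym formula \eqref{logRN} represents $d\bb P^{V_0}/d\bb P_{\mu_A}\big|_{\mc F_t}$ as an explicit exponential path functional, while the ergodicity of $\bb P^{V_0}$ under $\{\Theta_t\}$ (by the same argument as for $\bb P_{\mu_A}$ in Appendix G) forces $L_t\to\mu_{B_{V_0},\gamma_{V_0}}$ $\bb P^{V_0}$-a.s., hence $\bb P^{V_0}[L_t\in G]\to 1$. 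The pressure identity $\lambda_{V_0}=H(\bb P^{V_0}\vert\bb P_{\mu_A})+\int V_0\,d\mu_{B_{V_0},\gamma_{V_0}}$ from the variational principle of Section 4, together with $Q(V_0)=\lambda_{V_0}$, gives $H(\bb P^{V_0}\vert\bb P_{\mu_A})=-I(\mu_{B_{V_0},\gamma_{V_0}})$, and a standard change-of-measure argument converts $\bb P^{V_0}[L_t\in G]\to 1$ into
\begin{equation*}
\liminf_{t\to\infty}\,t^{-1}\log\bb P_{\mu_A}[L_t\in G]\,\geq\,H(\bb P^{V_0}\vert\bb P_{\mu_A})\,=\,-I(\mu_{B_{V_0},\gamma_{V_0}}),
\end{equation*}
after which letting $V_0$ range over suitable Lipschitz approximators produces $\geq-\inf_{\nu\in G}I(\nu)$. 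The main obstacle is precisely the reachability/approximation step: one must know that the Legendre map $V\mapsto \mu_{B_V,\gamma_V}$ has dense image in the effective domain of $I$, and this is exactly what the uniqueness of $\mu_V$ for every Lipschitz $V$ guarantees; without it, the $\liminf$ could strictly undershoot the upper bound and the two halves would fail to match.
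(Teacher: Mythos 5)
The paper does not actually give a proof of Theorem~\ref{teor23}: the statement is a specialization of Theorem~2.1 in \cite{Ki1}, quoted as an abstract result, and the work the paper does in this section is to \emph{verify Kifer's hypotheses} (existence of $Q$ via Lemma~\ref{Vi}, $Q=\lambda_V$ for Lipschitz $V$ via Lemma~\ref{Q(V)}, uniqueness of the equilibrium state via Theorem~\ref{teo20}), not to reprove the LDP itself. Your proposal instead reconstructs the Donsker--Varadhan argument from scratch: exponential Chebyshev plus Sion's minimax for the upper bound, and a Radon--Nikodym change of measure relative to the tilted process $\bb P^{V_0}$ for the lower bound. That route is sound and faithful to the mechanism underlying Kifer's theorem, and your identification $H(\bb P^{V_0}\vert\bb P_{\mu_A})=-I(\mu_{B_{V_0},\gamma_{V_0}})$ via the pressure identity and Theorem~\ref{teo20} is correct. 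So the comparison is really between a citation (the paper) and a self-contained proof sketch (you), and in that sense you are doing strictly more than the paper.

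The one genuinely loose step is your final sentence. You assert that uniqueness of $\mu_V$ for every Lipschitz $V$ ``is exactly what guarantees'' that the map $V\mapsto\mu_{B_V,\gamma_V}$ has dense image in the effective domain of $I$. That is not an immediate implication. Uniqueness of the equilibrium state at $V$ is equivalent to G\^ateaux differentiability of $Q$ at $V$ with $DQ(V)=\mu_V$; passing from differentiability on a dense set of directions to the LDP lower bound for an arbitrary open $G$ --- i.e., producing, for each $\nu\in G$ with $I(\nu)<\infty$ and each $\epsilon>0$, a Lipschitz $V_0$ with $\mu_{B_{V_0},\gamma_{V_0}}\in G$ and $I(\mu_{B_{V_0},\gamma_{V_0}})<I(\nu)+\epsilon$ --- is a separate convex-analysis reduction, which in \cite{Ki1} occupies a dedicated lemma rather than following for free. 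The paper sidesteps the issue by citing Kifer wholesale; if you choose to reprove the theorem, you must either carry out that reduction (approximation of $I$ by its values at exposed points of its effective domain) or cite it explicitly, instead of folding it into a density assertion.
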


\bigskip

From \eqref{lim} we get the upper bound estimate for $I$ and from Theorem \ref{teo20} (uniqueness) we get the lower bound estimate.
Then, we can state one of our main results (Theorem A in the Introduction):

\begin{theorem}
 Let $\{X_t, \,t \geq 0\}$ be the{\it{\emph{ a priori }process}},  then the large deviation principle at  level two
is true for our setting with the deviation function $I$ given by
$$I(\nu) =\sup_{V\in \mc C}\, \,\Big(\int V \,\mbox{d} \nu\, -\, Q(V)\Big)\,,$$
for any probability $\nu$ on $\{1,\dots,d\}^{\bb N} $ and $I(\nu)=\infty$ in any other case.

\end{theorem}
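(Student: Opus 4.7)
The plan is essentially to unpack Kifer's general framework (Theorem \ref{teor23}) and verify its single hypothesis using the machinery developed in the previous sections. The proof is short because all of the analytic work has already been done: what remains is a bookkeeping exercise matching our objects to Kifer's.

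First I would invoke Theorem \ref{teor23} as the engine. It asserts that the level-two LDP with rate function $I(\nu)=\sup_{V\in\mathcal C}\bigl(\int V\,d\nu-Q(V)\bigr)$ is valid as soon as, for every Lipschitz function $V:\{1,\dots,d\}^{\bb N}\to\bb R$, the associated equilibrium state $\mu_V$ (i.e.\ the probability realizing $Q(V)=\int V\,d\mu_V-I(\mu_V)$) is unique. So the only thing to check is this uniqueness condition, together with the identification of $I$ on general measures.

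Second, I would verify the uniqueness hypothesis. Fix a Lipschitz potential $V$. By Lemma \ref{Q(V)} we have $Q(V)=\lambda_V$, so an equilibrium state for $V$ is exactly a probability $\rho\in\mc P(\{1,\dots,d\}^{\bb N})$ realizing
\begin{equation*}
\lambda_V=\int V\,d\rho-I(\rho).
\end{equation*}
But Theorem \ref{teo20} states precisely that such a $\rho$ exists and is unique, namely $\rho=\mu_{B_V,\gamma_V}$, the initial stationary probability of the stochastic semigroup generated by $L^V$. Hence Kifer's hypothesis is met for every Lipschitz $V$, and Theorem \ref{teor23} delivers the LDP.

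Third, I would note the identification of the rate function. By the general large-deviation setup of \cite{Ki1}, $I$ takes the value $+\infty$ outside $\mc P(\{1,\dots,d\}^{\bb N})$, since the empirical measures $L_t^\omega$ are themselves probabilities and any testing against a constant immediately yields this on measures of total mass different from one; on $\mc P(\{1,\dots,d\}^{\bb N})$ the supremum can be restricted to Lipschitz $V$ by density in $\mathcal C$ and continuity of $V\mapsto Q(V)$ (convexity plus $|Q(V)-Q(W)|\le\|V-W\|_\infty$, the latter being immediate from the Feynman--Kac representation of $P_T^V$). This is the statement we wanted.

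The only step with any genuine content is the uniqueness assertion, and that has already been isolated as Theorem \ref{teo20}; conceptually it was the main obstacle, since it required identifying the minimizer of the variational problem with the stationary measure of $L^V$ and then showing that stationary measure itself is unique via the classical uniqueness of the Ruelle fixed point for the normalized potential $B_V$. Once that is in hand, the present theorem is a direct application of Kifer's abstract level-two LDP.
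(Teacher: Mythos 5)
Your proposal is correct and follows essentially the same route as the paper: invoke the particular version of Kifer's Theorem~2.1 stated as Theorem~\ref{teor23}, and discharge its sole hypothesis (uniqueness of the equilibrium state for each Lipschitz $V$) via Theorem~\ref{teo20}, using Lemma~\ref{Q(V)} to identify $Q(V)$ with $\lambda_V$. The extra remarks on $I(\nu)=\infty$ off $\mc P(\{1,\dots,d\}^{\bb N})$ and on restricting the supremum to Lipschitz $V$ match what the paper already records a few lines above the theorem.
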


We point out that Lemma \ref{F>0} characterizes the equilibrium state in our setting.
We can state a major result due to Y. Kifer which follows by the reasoning of Section 4 in \cite{Ki1}.
This was adapted from the original claim.

\begin{theorem} \label{Ri}
If for each Lipschitz function $V: \{1,\dots,d\}^{\bb N}  \to \mathbb{R}$
there exists a  positive eigenfunction for the associated continuous time Ruelle operator,
 then, the deviation function $I$ is also given by
\begin{equation*}
I(\nu) =     - \inf_{u\in \mc C^+}\, \int \frac{L(u)}{u}\, \mbox{d} \nu\,.
\end{equation*}
\end{theorem}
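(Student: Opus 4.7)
The proof plan is essentially to assemble pieces already established in the paper and apply the Legendre duality argument of Lemma \ref{F>0} in the reverse direction. Define the auxiliary functional
\[
J(\nu) := -\inf_{u\in\mc C^+}\int\frac{L(u)}{u}\,\mbox{d}\nu, \qquad \nu\in\mc P(\{1,\dots,d\}^{\bb N}),
\]
and $J(\nu)=+\infty$ otherwise. The goal is to show $I=J$ on $\mc M(\{1,\dots,d\}^{\bb N})$.

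First I would invoke Lemma \ref{F>0}: under the hypothesis that for every Lipschitz $V$ there exists a positive eigenfunction $F_V$ with $(L+V)F_V=\lambda_V F_V$, that lemma already gives the variational identity
\[
\lambda_V \;=\; \sup_{\nu\in\mc P(\{1,\dots,d\}^{\bb N})}\Big(\int V\,\mbox{d}\nu - J(\nu)\Big),
\]
and, by Legendre duality applied to the convex functional $V\mapsto\lambda_V$ on the space of Lipschitz potentials,
\[
J(\nu) \;=\; \sup_{V\in\mc C,\; V\text{ Lipschitz}}\Big(\int V\,\mbox{d}\nu - \lambda_V\Big).
\]
Next, I would bring in Lemma \ref{Q(V)}, which asserts $Q(V)=\lambda_V$ whenever $V$ is Lipschitz, so the previous display becomes
\[
J(\nu) \;=\; \sup_{V\in\mc C,\; V\text{ Lipschitz}}\Big(\int V\,\mbox{d}\nu - Q(V)\Big).
\]
The only remaining step is to compare this with the formula for $I$ from Theorem A, namely $I(\nu)=\sup_{V\in\mc C}\bigl(\int V\,\mbox{d}\nu-Q(V)\bigr)$. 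Here I would rely on the observation already recorded just before Theorem \ref{teor23} that the supremum defining $I$ can be restricted to the dense subspace of Lipschitz functions without changing its value; this uses the continuity of $\nu\mapsto\int V\,\mbox{d}\nu$ and the upper semicontinuity (in fact, continuity on Lipschitz potentials via the Feynman-Kac representation and uniform approximation) of $V\mapsto Q(V)$ with respect to the uniform norm. Combining both displays yields $I(\nu)=J(\nu)$ for every $\nu\in\mc P(\{1,\dots,d\}^{\bb N})$, and both functionals take the value $+\infty$ on signed or non-probability measures, so the identity extends to all of $\mc M(\{1,\dots,d\}^{\bb N})$.

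The main obstacle, and the only step that is not a direct citation, is justifying the restriction of the supremum in the definition of $I$ to Lipschitz potentials. For this I would argue as follows: given any continuous $V$ and $\varepsilon>0$, pick a Lipschitz $\tilde V$ with $\|\tilde V-V\|_\infty<\varepsilon$; then $|\int V\,\mbox{d}\nu-\int\tilde V\,\mbox{d}\nu|\le\varepsilon$, and from the Feynman-Kac representation \eqref{0} together with Lemma \ref{Vi} one obtains $|Q(V)-Q(\tilde V)|\le\varepsilon$, so the two suprema coincide. Once this density argument is in place, the chain of equalities above closes and the theorem follows.
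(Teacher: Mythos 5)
Your proposal is correct and follows essentially the same route as the paper's (very terse) proof, which amounts to a single pointer to Lemma \ref{F>0}: you correctly make explicit the chain the paper leaves implicit, namely Lemma \ref{F>0} for the Legendre-dual identity $-\inf_{u\in\mc C^+}\int \frac{L(u)}{u}\,\mbox{d}\nu = \sup_{V\,\text{Lipschitz}}\bigl(\int V\,\mbox{d}\nu-\lambda_V\bigr)$, Lemma \ref{Q(V)} to replace $\lambda_V$ by $Q(V)$, and the estimate $\vert Q(V)-Q(U)\vert\le\Vert V-U\Vert_\infty$ to extend the supremum from Lipschitz to all continuous $V$. The only cosmetic remark is that this last estimate is already proved as the first lemma of Appendix \ref{apQV}, so you may cite it directly rather than re-deriving it from the Feynman--Kac formula and Lemma \ref{Vi}.
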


It follows from last subsection (see Lemma \ref{F>0})  that the above expression is true in our setting. In this way our description of
the Large Deviation Principle at level two is completed.
We refer the reader to Lemma \ref{F>0} for
explicit expressions related to the above result.

We point out that the above Theorem \ref{Ri} in \cite{Ki1} (see also \cite{Ki2})
is presented in a different setting: the state space  is a  Riemannian manifold and it is considered a
certain class of differential operators as infinitesimal generators. We do not consider such differentiable structure. However, from last section we were able to adapt such reasoning to our setting.

\appendix

\section{The spectrum of ${\mc L}_A - I+V$ on ${\bb L}^2 (\mu_A)$ and Dirichlet form.}

For any $f\in \bb L^2 (\mu_A)$ the Dirichlet form of $f$ is
$${\mc E}_A\,(f,f) \,:=\, \<\,(I-{\mc L}_A)\, (f)\, , \, f\,\>_{\mu_A}\,. $$
Notice that
\begin{equation}\label{333}
 {\mc E}_A\,(f,f)= \frac{1}{2}\, \int \sum_{\sigma(y) =x} \, e^{A(y)} \,[f(x)-f(y)]^2\, \mbox{d} \, \mu_A (x)\geq 0\,.
\end{equation}
Indeed,
\begin{equation*}
 \begin{split}
  \<\,(I-{\mc L}_A)\, (f)\, , \, f\,\>_{\mu_A}\,= \,\int \,  \sum_{\sigma(y) =x} \, e^{A(y)} \,[f(x)-f(y)\big]\, f(x)\, \mbox{d} {\mu_A}(x)\,.
 \end{split}
\end{equation*}

By the other hand,
\begin{equation*}
 \begin{split}
 \<\,(I-{\mc L}_A)\, (f)\, , \, f\,\>_{\mu_A}\,&=\,\<f,f\>_{\mu_A}- \< {\mc L}_A(f),f\>_{\mu_A} \,=\,\int[{\mc L}_A(f^2)-{\mc L}_A(f)f]\,\mbox{d} {\mu_A} \,\\&=
  \, \int  \Big\{ \sum_{\sigma(y) =x} \, e^{A(y)} \,[f(y)-f(x)\big]\, f(y)\Big\}\, \mbox{d} {\mu_A}(x)\,.
 \end{split}
\end{equation*}
These two equalities imply that
\begin{equation*}
 \begin{split}
  \<\,(I-{\mc L}_A)\, (f)\, , \, f\,\>_{\mu_A}\,= \,\frac{1}{2}\, \int \sum_{\sigma(y) =x} \, e^{A(y)} \,[f(x)-f(y)]^2\, \mbox{d} \, \mu_A (x)\,.
 \end{split}
\end{equation*}

From expression \eqref{333} we have that ${\mc E}_A\,(f,f) =0$ implies  $f=0$.

We point out that we will consider bellow eigenvalues in ${\bb L}^2 (\mu_{A})$ which are not necessarily Lipschtiz.

Dirichlet forms are quite important (see \cite{kl}), among other reasons, because they are particulary useful when there is an spectral gap.
However, this will not be the case here.

\begin{proposition} Let a  Lipschitz function $V: \{1,\dots,d\}\to\bb R$ such that $\sup V-\inf V<2$.
There are eigenvalues $c$ for $\mc L_A-I+V$ in ${\bb L}^2 (\mu_{A})$ such that
 $\big[(\sup V-2)\vee 0\big]<c<\inf V$.  Each eigenvalue has infinite multiplicity.
Therefore, in this case, there is no spectral gap.
\end{proposition}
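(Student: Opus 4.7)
The plan is to construct, for each $c$ in the stated open interval, an infinite--dimensional family of eigenvectors of $\mc L_A-I+V$ in $\bb L^2(\mu_A)$ by Neumann--series inversion built from the Koopman operator. Writing the eigenvalue equation $(\mc L_A-I+V)f=cf$ in the form $\mc L_A f=\gamma f$ with $\gamma:=1-V+c$, the first step is the elementary check that the two--sided bound on $c$ forces $\|\gamma\|_\infty<1$: the inequality $c<\inf V$ gives $\sup\gamma=1-\inf V+c<1$, while $c>(\sup V-2)\vee 0$ combined with $\sup V-\inf V<2$ gives $\inf\gamma=1-\sup V+c>-1$ (the cases $\sup V\leq 2$ and $\sup V>2$ are handled in the same way).

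Next I would invoke two structural identities already in the paper: the normalization $\mc L_A \mc K=I$, and the adjoint formula $\mc L_A^{*}=\mc K$ on $\bb L^2(\mu_A)$. Together these say that $\mc K$ is an isometry on $\bb L^2(\mu_A)$, and it is non--surjective because $\sigma$ is $d$--to--$1$ with $d\geq 2$. This yields the orthogonal direct--sum decomposition
$$\bb L^2(\mu_A)\,=\,\mc K(\bb L^2(\mu_A))\,\oplus\,\ker(\mc L_A),$$
in which $\ker(\mc L_A)$ is infinite--dimensional: for any $g\in\bb L^2(\mu_A)$ the element $g-\mc K(\mc L_A g)$ lies in $\ker(\mc L_A)$ and it is nonzero whenever $g\notin\mc K(\bb L^2(\mu_A))$.

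With these pieces in hand I would set $Tf:=\mc K(\gamma f)$. The isometry property of $\mc K$ gives $\|Tf\|_2=\|\gamma f\|_2\leq\|\gamma\|_\infty\|f\|_2$, so $\|T\|<1$ and the Neumann series $\sum_{n\geq 0}T^n$ converges in operator norm. For each $f_0\in\ker(\mc L_A)$ I would define $f:=\sum_{n\geq 0}T^n f_0\in\bb L^2(\mu_A)$ and verify, by a short telescoping, that $\mc L_A f=\gamma f$; this in turn rests on the identity $\mc L_A T=M_\gamma$ (multiplication by $\gamma$), which is a direct consequence of $\mc L_A \mc K=I$ combined with the cocycle relation $\mc L_A((g\circ\sigma)h)=g\,\mc L_A h$. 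Thus $(\mc L_A-I+V)f=cf$.

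Finally, to obtain infinite multiplicity I would check that the linear assignment $f_0\mapsto f$ from $\ker(\mc L_A)$ into the $c$--eigenspace is injective. This is immediate: $T$ takes values in $\mc K(\bb L^2(\mu_A))$, so $f-f_0=\sum_{n\geq 1}T^n f_0$ lies in $\mc K(\bb L^2(\mu_A))$, and the uniqueness of the direct--sum decomposition forces $f_0=0$ whenever $f=0$. Since $c$ varies over a nonempty open real interval of eigenvalues, the absence of a spectral gap is automatic. I expect the only step that is more than bookkeeping to be the verification of $\mc L_A T=M_\gamma$, but this is essentially just the cocycle relation combined with the normalization of $A$; all other ingredients are standard operator--theoretic manipulations.
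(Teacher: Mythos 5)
Your proof is correct, and on close inspection it is the paper's proof seen through cleaner operator-theoretic glasses. The paper works (in the case $d=2$) with the twisted cohomological equation $z = \alpha - C\cdot(\alpha\circ\sigma)$, where $C(y)=\gamma(\sigma(y))$ and $z(y)=(-1)^{y_0}e^{-A(y)}$, and solves it by the explicit series $\alpha(y)=\sum_{j\geq 0}\gamma(\sigma(y))\cdots\gamma(\sigma^{j}(y))\,z(\sigma^{j}(y))$. Unwinding your Neumann series shows that $T^{n}f_{0}(y)=\gamma(\sigma(y))\cdots\gamma(\sigma^{n}(y))\,f_{0}(\sigma^{n}(y))$, so $\sum_{n\geq 0}T^{n}f_{0}$ is exactly the paper's $\alpha$ once one checks (as you can) that the paper's $z$ satisfies $\mc L_{A}(z)=\sum_{a}(-1)^{a}=0$, i.e.\ $z\in\ker(\mc L_A)$. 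What your reformulation buys is real: (i) it works for arbitrary $d$ with no change, whereas the paper specializes to $\{0,1\}^{\bb N}$; (ii) the normalization $\|T\|\leq\|\gamma\|_{\infty}<1$ makes convergence and membership in $\bb L^{2}(\mu_A)$ immediate; (iii) most importantly, the infinite multiplicity — which the paper merely asserts (``it is also easy to show\dots'') — becomes a one-line consequence of the orthogonal decomposition $\bb L^{2}(\mu_A)=\mathrm{range}(\mc K)\oplus\ker(\mc L_A)$ together with the injectivity of $f_{0}\mapsto\sum T^{n}f_{0}$, since the tail $\sum_{n\geq 1}T^{n}f_{0}$ lands in $\mathrm{range}(\mc K)$. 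One small remark: for completeness you should supply a sentence justifying that $\ker(\mc L_A)$ is indeed infinite-dimensional (e.g.\ for each cylinder $[w]$ of length $n$ and each pair $a\neq b$, a suitable combination of $\mathbf{1}_{[a]}\cdot(\mathbf{1}_{[w]}\circ\sigma)$ and $\mathbf{1}_{[b]}\cdot(\mathbf{1}_{[w]}\circ\sigma)$ is annihilated by $\mc L_A$, and distinct $w$ give mutually orthogonal vectors); your remark that $g-\mc K(\mc L_A g)\in\ker(\mc L_A)$ shows the kernel is nontrivial but not yet that it is infinite-dimensional.
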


\begin{proof}
The existence of positive eigenvalues $c$
for the operator $\mc L_A-I+V$ satisfying  $\big[(\sup V-2)\vee 0\big]<c<\inf V$  will obtained
from solving the  twisted cohomological
equation.
In order to simplify the reasoning we will present the proof for the case $E=\{0,1\}^\mathbb{N}$.  From
section 2.2 in \cite{BS}, we know that given  functions $z:E\to \mathbb{R}$ and $C:E\to
\mathbb{R}$ one can solve in $\alpha$  the twisted cohomological
equation
\begin{equation}\label{solution}
 \frac{z(y)}{C(y)}= \frac{1}{C(y)}\alpha(y) - \alpha(\sigma(y)),
\end{equation}
in the case that $|C|<1$. Indeed,  just take
$$\alpha(y)= \,
 \sum_{j=0}^\infty \frac{\frac{z(\sigma^j(y))}{C(\sigma^j(y) )}}{(C(y)\, C(\sigma(y))\dots C(\sigma^j (y)))^{-1}}\,.$$
Note that this function $\alpha$ is measurable and bounded but not Lipschitz.

Take
$z(y)=(-1)^{y_0} e^{-A(y)}$, when $y=(y_0,y_1,y_2,\dots)$.
Now, for $c\in\big([(\sup V-2)\vee 0],\inf V\big)$ fixed, consider $C(y)= 1-V(\sigma(y))+c$. Notice that $|C|<1$.
Then, the equation \eqref{solution} becomes
\begin{equation*}
 (-1)^{y_0}= e^{A(y)}\Big\{\alpha(y) - \alpha(\sigma(y))\big(1-V(\sigma(y))+c\big)\Big\}\,.
\end{equation*}

Let $x\in\{1,\dots,d\}^{\bb N}$. Adding the equations above when $y=0x$ and when $y=1x$, we get
$$(\mc L_A-I+V)(\alpha)(x)\,=\, c\alpha(x)\,,$$
because $\sigma(0x)=x=\sigma(1x)$, and  the potential $A$ is normalized.

Is is also easy to show that changing a little bit the argument  one can get an infinite dimensional set
of possible $\alpha$ associated to the same eigenvalue.

\end{proof}

\section{Basic tools for continuous time Markov chains}\label{provalema}
In this section  we present the proofs of the Lemma \ref{pv} and Lemma \ref{lemma1}.
In order to do that, we will present another way to analyze the properties of a continuous time Markov chain.

Suppose the process $\{X_t,\,t\geq 0\}$ is a continuous time Markov chain.
In an alternative way we can described it by considering its skeleton chain (see \cite{Li} \cite{Pro}).
Let $\{\xi_n\}_{n\in\bb N}$ be a discrete time Markov chain
with transition probability given by $p(x,y)=\textbf{1}_{[\sigma(y)=x]}e^{A(y)}$.
Consider a sequence of random variables $\{\tau_n\}_{n\in \bb N}$,
which are independent and identically distributed according to an exponential
law  of parameter $1$.
For $n\geq 0$, define
\begin{equation*}
 T_0=0\,,\qquad\quad T_{n+1}=T_n+\tau_n=\tau_0+\tau_1+\dots+\tau_n\,.
\end{equation*}
Thus, $X_t$ can be rewritten as
$\sum_{n=0}^{+\infty} \xi_n \textbf{1}_{[T_n\leq t<T_{n+1}]}$, for all $t\geq 0$.

\begin{proof}[Proof of Lemma \ref{pv}]
Using the above,  we are able to describe  expression \eqref{0} in a different way:
\begin{equation*}
\begin{split}
 &P_{T}^V (f)(x)\,=\, \bb E_{x} \big[e^{\int_0^{T} V(X_r)\,dr} f(X_{T})\big]
 =\sum_{n=0}^{+\infty}
 \bb E_{x} \big[e^{\int_0^{T} V(X_r)\,dr} f(X_{T})\textbf{1}_{[T_n\leq T< T_{n+1}]}\big]\\
 &= \sum_{n=0}^{+\infty}
 \bb E_{x} \big[e^{T_1 V(\xi_0)+ (T_2-T_1)V(\xi_1)+\dots+(T_n-T_{n-1}) V(\xi_{n-1})+(T-T_n) V(\xi_{n})}
 f(\xi_n)\textbf{1}_{[T_n\leq T< T_{n+1}]}\big]\\
  &= \sum_{n=0}^{+\infty}
 \bb E_{x} \big[e^{\tau_0 V(\xi_0)+\tau_1 V(\xi_1)+\dots+\tau_{n-1} V(\xi_{n-1})+(T-\sum_{i=0}^{n-1}\tau_i) V(\xi_{n})}
 f(\xi_n)\textbf{1}_{[\sum_{i=0}^{n-1}\tau_i\leq T< \sum_{i=0}^{n}\tau_i]}\big]\\
&= \bb E_{x} \big[e^{T V(\xi_0)}
 f(\xi_0)\textbf{1}_{[ T< \tau_0]}\big]\,\,+\\
&
\sum_{n=1}^{+\infty} \sum_{a_1=1}^{d}\dots \sum_{a_n=1}^{d}
 \bb E_{x} \big[e^{\tau_0 V(\xi_0)+\dots+(T-\sum_{i=0}^{n-1}\tau_i) V(\xi_{n})}
 f(\xi_n)\textbf{1}_{[\sum_{i=0}^{n-1}\tau_i\leq T< \sum_{i=0}^{n}\tau_i]}\textbf{1}_{[\xi_{1}=a_1x,\dots,\xi_{n}=a_n\dots a_1x]}\big]\,,
\end{split}
\end{equation*}
where $\sigma^n(a_n\dots a_1x)=x$.
The first term above is equal to
 $e^{T V(x)}
 f(x)e^{-T}$.
The summand in the second one is equal to
\begin{equation*}
\begin{split}
  \bb E_{x} \Big[&e^{\tau_0 V(\xi_0)+\dots+(T-\sum_{i=0}^{n-1}\tau_i) V(\xi_{n})}
 f(\xi_n)\textbf{1}_{[\sum_{i=0}^{n-1}\tau_i\leq T< \sum_{i=0}^{n}\tau_i]}\Big|\xi_{1}=a_1x,\dots,\xi_{n}=a_n\dots a_1x\Big]\cdot\\
&\cdot\,
\bb P_{x} \big[\xi_{1}=a_1x,\dots,\xi_{n}=a_n\dots a_1x\big]\,.
\end{split}
\end{equation*}
Using the transition probability of the Markov chain $\{\xi_n\}_n$, we get
\begin{equation*}
 \bb P_{x} \big[\xi_{1}=a_1x,\dots,\xi_{n}=a_n\dots a_1x\big]\,=\,e^{A(a_1x)}\dots\, e^{A(a_n\dots a_1x)}\,.
\end{equation*}
Recalling that the random variables $\{\tau_i\}$ are independent and identically distributed according to an exponential
law  of parameter $1$, we have
\begin{equation*}
\begin{split}
 & \bb E_{x} \Big[e^{\tau_0 V(\xi_0)+\dots+(T-\sum_{i=0}^{n-1}\tau_i) V(\xi_{n})}
 f(\xi_n)\textbf{1}_{[\sum_{i=0}^{n-1}\tau_i\leq T< \sum_{i=0}^{n}\tau_i]}\Big|\xi_{1}=a_1x,\dots,\xi_{n}=a_n\dots a_1x\Big]\\
&= \bb E_{x} \Big[e^{\tau_0 V(x)+\dots+(T-\sum_{i=0}^{n-1}\tau_i) V(a_n\dots a_1x)}
 f(a_n\dots a_1x)\textbf{1}_{[\sum_{i=0}^{n-1}\tau_i\leq T< \sum_{i=0}^{n}\tau_i]}\Big]\\
&= f(a_n\dots a_1x)\! \int_0^\infty \!\!\!\!\!dt_{n} \dots \!\!\int_0^\infty\!\!\! \!\!dt_{0}\,
e^{t_0 V(x)+\dots+(T-\sum_{i=0}^{n-1}t_i) V(a_n\dots a_1x)}
\textbf{1}_{[\sum_{i=0}^{n-1}t_i\leq T< \sum_{i=0}^{n}t_i]}e^{-t_0}\!\dots e^{-t_n}\,.
\end{split}
\end{equation*}
Therefore,
\begin{equation*}
\begin{split}
 &P_{T}^V (f)(x)\,=\, \bb E_{x} \big[e^{\int_0^{T} V(X_r)\,dr} f(X_{T})\big]=e^{T V(x)}
 f(x)e^{-T}\,+\\
&
\sum_{n=1}^{+\infty} \sum_{a_1=1}^{d}\dots \sum_{a_n=1}^{d}
e^{A(a_1x)}\dots\, e^{A(a_n\dots a_1x)}
f(a_n\dots a_1x) \cdot\\
&\int_0^\infty \!\!\!\!dt_{n} \dots \int_0^\infty\!\! \!\!dt_{0}\,\,
e^{t_0 V(x)+\dots+(T-\sum_{i=0}^{n-1}t_i) V(a_n\dots a_1x)}
\textbf{1}_{[\sum_{i=0}^{n-1}t_i\leq T< \sum_{i=0}^{n}t_i]}e^{-t_0}\dots e^{-t_n}\,.
\end{split}
\end{equation*}
\end{proof}

\begin{proof}[Proof of Lemma \ref{lemma1}]
We begin analyzing
\begin{equation}\label{Iv}
\begin{split}
&\mc I_V^T(a_n\dots a_1 x)\\
&=\int_0^\infty \!\!\!\!dt_{n} \dots \int_0^\infty\!\! \!\!dt_{0}\,\,
e^{t_0 V(x)+\dots+(T-\sum_{i=0}^{n-1}t_i) V(a_n\dots a_1x)}
\textbf{1}_{[\sum_{i=0}^{n-1}t_i\leq T< \sum_{i=0}^{n}t_i]}e^{-t_0}\dots e^{-t_n}\\
&\leq e^{TC_Vd(x,y)+TC_Vd(a_1x,a_1y)+\dots+TC_Vd(a_n\dots a_1x,a_n\dots a_1y)}\cdot\\
&\cdot\int_0^\infty \!\!\!\!dt_{n} \dots \int_0^\infty\!\! \!\!dt_{0}\,\,
e^{t_0 V(y)+\dots+(T-\sum_{i=0}^{n-1}t_i) V(a_n\dots a_1y)}
\textbf{1}_{[\sum_{i=0}^{n-1}t_i\leq T< \sum_{i=0}^{n}t_i]}e^{-t_0}\dots e^{-t_n}\\
&\leq e^{TC_V(1+\theta+\dots+\theta^n)d(x,y)}\cdot\\
&\cdot\int_0^\infty \!\!\!\!dt_{n} \dots \int_0^\infty\!\! \!\!dt_{0}\,\,
e^{t_0 V(y)+\dots+(T-\sum_{i=0}^{n-1}t_i) V(a_n\dots a_1y)}
\textbf{1}_{[\sum_{i=0}^{n-1}t_i\leq T< \sum_{i=0}^{n}t_i]}e^{-t_0}\dots e^{-t_n}\\
&\leq e^{TC_V(1-\theta)^{-1}d(x,y)}\mc I_V^T(a_n\dots a_1 y)\,
\end{split}
\end{equation}
and $e^{T V(x)}
 e^{-T}\leq  e^{TC_Vd(x,y)} e^{T V(y)}
e^{-T}$.
Since the potential $A$  is also Lipschitz, we get
\begin{equation}\label{Alip}
\begin{split}
   e^{A(a_1x)}\dots\, e^{A(a_n\dots a_1x)}&\leq
 e^{C_A(\theta+\dots+\theta^n)d(x,y)}
e^{A(a_1y)}\dots\, e^{A(a_n\dots a_1y)}\\
&\leq
 e^{C_A\theta(1-\theta)^{-1}d(x,y)}
e^{A(a_1y)}\dots\, e^{A(a_n\dots a_1y)}\,.
                 \end{split}
\end{equation}
By the hypothesis we assume for $f$, we get
$$f(a_n\dots a_1x)\leq e^{C_f\theta^n d(x,y)} f(a_n\dots a_1y)\leq e^{C_f\theta d(x,y)} f(a_n\dots a_1y)\,.$$
Thus,
\begin{equation*}
\begin{split}
&P_{T}^V (f)(x)\,=\,e^{T V(x)}
 e^{-T}\,+\,
\sum_{n=1}^{+\infty} \sum_{a_1=1}^{d}\!\dots\! \sum_{a_n=1}^{d}
e^{A(a_1x)}\dots e^{A(a_n\dots a_1x)} f(a_n\dots a_1x)
 \mc I_V^T(a_n\dots a_1 x)\\
&\leq e^{TC_Vd(x,y)} e^{T V(y)}
e^{-T}\\
&+e^{[(C_A\theta+TC_V)(1-\theta)^{-1}+C_f\theta]d(x,y)}\sum_{n=1}^{+\infty} \!\sum_{a_1=1}^{d}\!\dots\! \sum_{a_n=1}^{d}
e^{A(a_1y)}\!\dots e^{A(a_n\dots a_1y)}f(a_n\dots a_1y)
\mc I_V^T(a_n\dots a_1 y)\\
&\leq e^{[(C_A\theta+TC_V)(1-\theta)^{-1}+C_f\theta]d(x,y)} \Big[ e^{T V(y)}
e^{-T}\\&+\sum_{n=1}^{+\infty} \!\sum_{a_1=1}^{d}\!\dots\! \sum_{a_n=1}^{d}
e^{A(a_1y)}\!\dots e^{A(a_n\dots a_1y)} f(a_n\dots a_1y)
\mc I_V^T(a_n\dots a_1 y)\Big]\\
&\leq e^{[(C_A\theta+TC_V)(1-\theta)^{-1}+C_f\theta]d(x,y)}P_{T}^V (f)(y)\,.
\end{split}
\end{equation*}

\end{proof}

\section{Radon-Nikodym derivative}\label{RN}
Let $\{\mc F_T,\,T\geq 0\}$ be the natural filtration.
\begin{proposition}\label{apendice1}
  The Radon-Nikodim derivative of the measure $\bb P_\mu$ (associated to the{\it{\emph{ a priori }process}})
    concerning the admissible measure $\tilde{\bb P}_\mu$ (see Definition \ref{admissible}) restricted to $\mc F_T$ is
  \begin{equation*}
 \begin{split}
  \frac{\mbox{d}\bb P_\mu}{\mbox{d}\tilde{\bb P}_\mu}\Big|_{\mc F_T}
  =\exp\Bigg\{\int_0^T (\tilde{\gamma}(X_s)-1)\,\mbox{d}s+\sum_{s\leq T}\textbf{1}_{[\sigma(X_s)=X_{s^-}]}
\Big(A(X_s)-\tilde{A}(X_s)-\log\big(\tilde{\gamma}(\sigma(X_s))\big)\Big)\Bigg\}\,.
 \end{split}
\end{equation*}
\end{proposition}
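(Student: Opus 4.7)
My plan is to prove the formula by computing both path measures explicitly on their skeleton chain decomposition and taking the ratio. Both the a priori process and the admissible process are piecewise constant pure jump processes, so the law of a trajectory up to time $T$ is fully encoded by the initial state $X_0$, the number of jumps $N_T$, the jump times $0 < T_1 < T_2 < \dots < T_{N_T} \leq T$, and the post-jump states $X_{T_1}, \dots, X_{T_{N_T}}$. I would first fix $n \geq 0$ and condition on $\{N_T = n\}$, where the law of $(X_0, T_1, \dots, T_n, X_{T_1}, \dots, X_{T_n})$ is supported on tuples $(x_0, t_1, \dots, t_n, x_1, \dots, x_n)$ with $\sigma(x_{i+1}) = x_i$ and $0 < t_1 < \dots < t_n \leq T$.

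Under $\bb P_\mu$ the particle waits an $\text{Exp}(1)$ time at each state and then jumps to a $\sigma$-preimage with probability $e^{A(y)}$; on $\{N_T = n\}$ the joint density with respect to $\mu \otimes \text{Leb}^n \otimes (\text{counting})^n$ is
\begin{equation*}
\Big(\prod_{i=0}^{n-1} e^{-(t_{i+1}-t_i)} e^{A(x_{i+1})}\Big)\, e^{-(T - t_n)},
\end{equation*}
with $t_0 := 0$. Under $\tilde{\bb P}_\mu$ the waiting time at state $x$ is $\text{Exp}(\tilde{\gamma}(x))$ and, when a jump occurs, the preimage $y$ is chosen with probability $e^{\tilde{A}(y)}$ (using that $\tilde{A}$ is normalized); so on the same event the joint density is
\begin{equation*}
\Big(\prod_{i=0}^{n-1} \tilde{\gamma}(x_i) e^{-\tilde{\gamma}(x_i)(t_{i+1}-t_i)} e^{\tilde{A}(x_{i+1})}\Big)\, e^{-\tilde{\gamma}(x_n)(T - t_n)}.
\end{equation*}
Both measures share the common initial law $\mu$, so it drops out of the ratio.

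Taking the quotient and collecting terms, the exponent separates into a part proportional to holding-time lengths and a part supported on jumps. Writing the process's value on $[t_i, t_{i+1})$ as $x_i$, the sum $\sum_{i=0}^{n-1}(\tilde{\gamma}(x_i) - 1)(t_{i+1} - t_i) + (\tilde{\gamma}(x_n) - 1)(T - t_n)$ is exactly $\int_0^T (\tilde{\gamma}(X_s) - 1)\,ds$. For the jump part, at each jump time $s = t_{i+1}$ we have $X_{s^-} = x_i$ and $X_s = x_{i+1}$ with $\sigma(X_s) = X_{s^-}$, so the contribution $A(x_{i+1}) - \tilde{A}(x_{i+1}) - \log\tilde{\gamma}(x_i)$ is $A(X_s) - \tilde{A}(X_s) - \log\tilde{\gamma}(\sigma(X_s))$. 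Summing over $i$ rewrites the jump part as $\sum_{s \leq T} \mathbf{1}_{[\sigma(X_s) = X_{s^-}]}[A(X_s) - \tilde{A}(X_s) - \log\tilde{\gamma}(\sigma(X_s))]$. This gives the stated Radon–Nikodym derivative on each $\{N_T = n\}$.

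To finish, I would note that the formula is measurable with respect to the natural filtration $\mc F_T$ and agrees with the ratio of densities on a generating $\pi$-system of cylinder events (events determined by the configuration of $(N_T, T_1, X_{T_1}, \dots, T_{N_T}, X_{T_{N_T}})$), and then extend to all of $\mc F_T$ by a standard monotone class argument. Non-degeneracy of the $\text{Exp}(\tilde{\gamma})$ densities (which requires only $\tilde{\gamma} > 0$, guaranteed by hypothesis) ensures the two measures are mutually absolutely continuous on $\mc F_T$. The main subtlety, rather than any hard analytical step, is the careful bookkeeping that identifies the two explicit finite-dimensional densities with integrals and point-sums along the path; once the skeleton-chain parameterization is fixed this reduces to rearranging exponentials.
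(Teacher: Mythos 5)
Your proof is correct and takes essentially the same approach as the paper: both decompose via the skeleton chain, condition on the number of jumps up to time $T$, write out the exponential-waiting-time and jump-probability factors explicitly, take the ratio, and reassemble the holding-time contribution into $\int_0^T(\tilde\gamma(X_s)-1)\,ds$ and the jump contribution into the sum over jump times, finally extending from cylinder events to all of $\mc F_T$ by approximation. The only presentational difference is that you write out the two finite-dimensional densities directly and quotient them, whereas the paper performs the equivalent change-of-measure computation inside an expectation of a test function; the underlying calculation is identical.
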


\begin{proof}
The probabilities  $\tilde{\bb P}_\mu$ and  $\bb P_\mu$  on $\mc D$ are equivalent, because the initial measure and the allowed jumps are the same.
 Thus, the expectation under $\bb E_{\mu}$ of all bounded function $\psi:\mc D\to \bb R$, $\mc F_T$-measurable,
is
\begin{equation*}
 \begin{split}
\tilde{\bb E}_{\mu}\Big[\,\psi\,
\frac{\mbox{d}\bb P_\mu}{\mbox{d}\tilde{\bb P}_\mu}\Big|_{\mc F_T}\Big]\,.
 \end{split}
\end{equation*}
The goal here is to obtain a formula for  the Radon-Nikodim derivative $\frac{\mbox{d}\bb P_\mu}{\mbox{d}\tilde{\bb P}_\mu}$.
Since every bounded $\mc F_T$-measurable function can be approximated by functions depending only on a finite number of coordinates,
then, it is enough to work with these functions.
For $k\geq 1$, consider a sequence of times $0\leq t_1<\dots<t_k\leq T$ and a bounded function $F:\big(\{1,\dots,d\}^{\bb N}\big)^k\to\bb R$.
Using the skeleton chain, presented in the proof of  Lemma \ref{pv}, we get
\begin{equation*}
 \begin{split}
\bb E_{\mu}[F(X_{t_1},\dots,X_{t_k})]=\sum_{n\geq 0}
\bb E_{\mu}\big[F(X_{t_1},\dots,X_{t_k})\textbf{1}_{[T_n\leq T< T_{n+1}]}\big]
\,.
 \end{split}
\end{equation*}
Since $F(X_{t_1},\dots,X_{t_k})$ restricted to the set $[T_n\leq T< T_{n+1}]$
 depends only on $\xi_1, T_1,\dots, \xi_n,T_n$, there exist functions $\bar F_n$ such that
 \begin{equation*}
 \begin{split}
\bb E_{\mu}[F(X_{t_1},\dots,X_{t_k})]=\sum_{n\geq 0}
\bb E_{\mu}\big[\bar F_n(\xi_1, T_1,\dots, \xi_n,T_n)\textbf{1}_{[T_n\leq T< T_{n+1}]}\big]
\,.
 \end{split}
\end{equation*}
Through some calculations that are similar to the one used on the Corollary 2.2 in Appendix 1 of the \cite{kl},
 the last probability is equal to
\begin{equation}\label{eq22}
 \begin{split}
\sum_{n\geq 0}
\bb E_{\mu}\big[\bar F_n(\xi_1, T_1,\dots, \xi_n,T_n)\,\textbf{1}_{[T_n\leq T]}\,\,e^{-\lambda(\xi_n)(T-T_n)}\big]\,.
 \end{split}
\end{equation}

Then, we need to estimate
for each $n\in \bb N$ and, moreover, for all bounded measurable function  $G: \big(\{1,\dots,d\}^{\bb N}\times (0,\infty)\big)^n\to\bb R$ the expectation
\begin{equation*}
 \begin{split}
\bb E_{\mu}\big[G(\xi_1, T_1,\dots, \xi_n,T_n)\big]=\int_{\{1,\dots,d\}^{\bb N}}\bb E_{x}\big[G(\xi_1, T_1,\dots, \xi_n,T_n)\big]
\,\mbox{d}\mu(x)\,.
 \end{split}
\end{equation*}
Notice that, for all $x\in \{1,\dots,d\}^{\bb N}$,
\begin{equation*}
 \begin{split}
&\bb E_{x}\big[G(\xi_1, T_1,\dots, \xi_n,T_n)\big]= \sum_{a_1=1}^{d}\dots \sum_{a_n=1}^{d}
e^{A(a_1x)}\dots\, e^{A(a_n\dots a_1x)}\\
&\qquad\quad\qquad\quad \cdot\,\,\Big\{\int_0^\infty \!\!\!\!dt_{n-1} \dots \int_0^\infty\!\! \!\!dt_{0}\,\,
e^{-t_0}\dots e^{-t_{n-1}}\,G(a_1x,t_0,\cdots,a_n\dots a_1 x,t_{n-1}+\dots+t_0)\Big\}\\
&= \sum_{a_1=1}^{d}\dots \sum_{a_n=1}^{d}
e^{\tilde A(a_1x)}\dots\, e^{\tilde A(a_n\dots a_1x)}\Bigg\{\!\!\int_0^\infty \!\!\!\!dt_{n-1} \dots \int_0^\infty\!\! \!\!dt_{0}\,\,
\tilde\gamma(x)e^{-\tilde\gamma(x)t_0}\dots \tilde\gamma(a_{n-1}\dots a_1x)e^{-\tilde\gamma(a_{n-1}\dots a_1x)t_{n-1}}\\
&\qquad\quad\qquad\quad \cdot\,\,e^{A(a_1x)-\tilde A(a_1x)}\dots\, e^{A(a_n\dots a_1x)-\tilde A(a_n\dots a_1x)}\,
\,\frac{e^{(\tilde\gamma(x)-1)t_0}}{\tilde\gamma(x)}\dots \frac{e^{(\tilde\gamma(a_{n-1}\dots a_1x)-1)t_{n-1}}}{\tilde\gamma(a_{n-1}\dots a_1x)}\\
&\qquad\quad\qquad\quad \cdot\,\,G(a_1x,t_0,\cdots,a_n\dots a_1 x,t_{n-1}+\dots+t_0)\,\Bigg\}\\
&=\tilde{\bb E}_{x}\Big[G(\xi_1, T_1,\dots, \xi_n,T_n)\,\exp\Big\{\sum_{i=0}^{n-1}(\tilde\gamma(\xi_i)-1)\tau_i\Big\}\prod_{i=0}^{n-1}
\,e^{A(\xi_{i+1})-\tilde A(\xi_{i+1})}\,\pfrac{1}{\tilde\gamma(\xi_i)}\Big]\,.
\end{split}
\end{equation*}

We can write $\sum_{i=0}^{n-1}(\tilde\gamma(\xi_i)-1)\tau_i$ as
\begin{equation*}
\sum_{i=0}^{n-1}(\tilde\gamma(\xi_i)-1)\int_{0}^{T_n} \textbf{1}_{[T_i\leq s< T_{i+1}]}\,\mbox{d}s=
\int_{0}^{T_n}\sum_{i=0}^{\infty}(\tilde\gamma(\xi_i)-1) \,\textbf{1}_{[T_i\leq s< T_{i+1}]}\,\mbox{d}s=
\int_{0}^{T_n}(\tilde\gamma(X_s)-1) \,\mbox{d}s,
\end{equation*}
and, we can write $e^{A(\xi_{i+1})-\tilde A(\xi_{i+1})}\,\pfrac{1}{\tilde\gamma(\xi_i)}$ as
\begin{equation*}
 \begin{split}
 & \exp\Big\{\sum_{i=0}^{n-1}(A(\xi_{i+1})-\tilde A(\xi_{i+1})-\log\tilde\gamma(\xi_i))\Big\}\\=&
  \exp\Big\{\sum_{i=0}^{n-1}\textbf{1}_{[\sigma(\xi_{i+1})=\xi_{i}]}\big(A(\xi_{i+1})-\tilde A(\xi_{i+1})-\log\tilde\gamma(\sigma(\xi_{i+1}))\big)\Big\}\\
   =&\exp\Big\{\sum_{s\leq T_n}\textbf{1}_{[\sigma(X_s)=X_{s^-}]}
\Big(A(X_s)-\tilde{A}(X_s)-\log\big(\tilde{\gamma}(\sigma(X_s))\big)\Big)\Big\}\,.
 \end{split}
\end{equation*}
The expectation under $\bb P_{x}$ of $G(\xi_1, T_1,\dots, \xi_n,T_n)$ becomes
\begin{equation*}
 \begin{split}
\tilde{\bb E}_{x}\Big[G(\xi_1, T_1,\dots, \xi_n,T_n)\,
\exp\Big\{\int_{0}^{T_n}(\tilde\gamma(X_s)-1) \,\mbox{d}s+\sum_{s\leq T_n}\textbf{1}_{[\sigma(X_s)=X_{s^-}]}
\Big(A(X_s)-\tilde{A}(X_s)-\log\big(\tilde{\gamma}(\sigma(X_s))\big)\Big)\Big\}\Big]\,.
\end{split}
\end{equation*}
Using the formula above in the equation \eqref{eq22}, the expectation under $\bb E_{\mu}$ of  $F(X_{t_1},\dots,X_{t_k})$ is equal to
\begin{equation*}
 \begin{split}
&\sum_{n\geq 0}
\tilde{\bb E}_{\mu}\Bigg[\bar F_n(\xi_1, T_1,\dots, \xi_n,T_n)\,
\textbf{1}_{[T_n\leq T]}\,\,e^{-\lambda(\xi_n)(T-T_n)}\\
&\quad\cdot\,\,\exp\Big\{\int_{0}^{T_n}(\tilde\gamma(X_s)-1) \,\mbox{d}s+\sum_{s\leq T_n}\textbf{1}_{[\sigma(X_s)=X_{s^-}]}
\Big(A(X_s)-\tilde{A}(X_s)-\log\big(\tilde{\gamma}(\sigma(X_s))\big)\Big)\Big\}\Bigg]\,.
 \end{split}
\end{equation*}
 Once again, we use some calculations similarly to the Corollary 2.2 in Appendix 1 of the \cite{kl} and we rewrite the expression above as
 \begin{equation*}
 \begin{split}
&\sum_{n\geq 0}
\tilde{\bb E}_{\mu}\Bigg[\bar F_n(\xi_1, T_1,\dots, \xi_n,T_n)\,
\textbf{1}_{[T_n\leq T< T_{n+1}]}\\
&\quad\cdot\,\,\exp\Big\{\int_{0}^{T}(\tilde\gamma(X_s)-1) \,\mbox{d}s+\sum_{s\leq T}\textbf{1}_{[\sigma(X_s)=X_{s^-}]}
\Big(A(X_s)-\tilde{A}(X_s)-\log\big(\tilde{\gamma}(\sigma(X_s))\big)\Big)\Big\}\Bigg],
 \end{split}
\end{equation*}
and, this sum is equal to
 \begin{equation*}
 \begin{split}
&
\tilde{\bb E}_{\mu}\Big[F(X_{t_1},\dots,X_{t_k})\,
\exp\Big\{\int_{0}^{T}(\tilde\gamma(X_s)-1) \,\mbox{d}s+\sum_{s\leq T}\textbf{1}_{[\sigma(X_s)=X_{s^-}]}
\Big(A(X_s)-\tilde{A}(X_s)-\log\big(\tilde{\gamma}(\sigma(X_s))\big)\Big)\Big\}\Big]\,.
 \end{split}
\end{equation*}
This finish the proof.

\end{proof}

\section{Proof of Lemma \ref{lemma11}}\label{provalema11}

\begin{proof}[Proof of Lemma \ref{lemma11}]
We claim that
\begin{equation*}
\begin{split}
 M^G_T(\omega)=\sum_{s\leq T}\textbf{1}_{\{\sigma(\omega_s)\,=  \,\omega_{s^-}\}} G(\omega_s)
\,
- \,\int_0^T\tilde{\gamma}(\omega_s)G(\omega_s)\, \mbox{d}s
\end{split}
\end{equation*}
is a $\tilde{\bb P}_\mu$ - martingale. Then, this lemma will follow from $\tilde{\bb E}_\mu\big[M^G_T\big]=\tilde{\bb E}_\mu\big[M^G_0\big]=0$.
In order to prove this claim it is enough to prove that
\begin{equation}\label{mart}
\begin{split}
 M_T(\omega)=\sum_{s\leq T}\textbf{1}_{\{\sigma(\omega_s)\,=  \,\omega_{s^-}\}}
\,
- \,\int_0^T\tilde{\gamma}(\omega_s)\, \mbox{d}s
\end{split}
\end{equation}
is a $\tilde{\bb P}_\mu$ - martingale, because $ M^G_T=\int G\,\mbox{d}M_T$ will be a $\tilde{\bb P}_\mu$ - martingale (see \cite{ry}).

Now, we prove \eqref{mart}.
Let $\{\mc F_T,\,T\geq 0\}$ be the natural filtration. For all $S<T$, we  prove that
$
\tilde{\bb E}_\mu\big[M_T-M_S\vert \mc F_S\big]=0.
$
By Markov
property, we only need to show that
$
\tilde{\bb E}_x\big[M_t\big]=0.
$

Denote by  $\mc D_x $ the space of  all trajectories $\omega$ in $\mc D$ such that $\omega_0=x$.
Observe that,  for all $\omega$ in $\mc D_x$,
\begin{equation}\label{11}
\begin{split}
 \int_0^t\tilde{\gamma}(\omega_s)\, \mbox{d}s\,=\, \sum_{k\geq 1}\sum_{i_1=1}^{d}\cdots\sum_{i_k=1}^{d}\tilde{\gamma}(i_k\dots i_1x)
\int_0^t\textbf{1}_{[\omega_s=i_k\dots i_1x]}\,\mbox{d}s\,.
\end{split}
\end{equation}

For all $s\geq 0$ and $y \in \{1,\dots,d\}^{\bb N}$, $N_s(y)$ denotes the number of times that the exponential clock rang at site $y$.
Thus, the first term on the right side of \eqref{mart} can be rewritten as
\begin{equation}\label{111}
\begin{split}
\sum_{s\leq t}\textbf{1}_{\{\sigma(\omega_s)\,=  \,\omega_{s^-}\}} =  \sum_{k\geq 1}\sum_{i_1=1}^{d}\cdots\sum_{i_k=1}^{d} N_t(i_k\dots i_1x)\,,
\end{split}
\end{equation}
for all $\omega$ in $\mc D_x$.

Since \eqref{11} and \eqref{111} are true, in order to conclude this prove, it is sufficient  to show that
\begin{equation}\label{ex}
\begin{split}
 \tilde{\bb E}_x\big[N_t(y)-\tilde{\gamma}(y)
\int_0^t\textbf{1}_{[X_s=y]}\,\mbox{d}s\big]\,= \,0\,,
\end{split}
\end{equation}
for all $y \in \{1,\dots,d\}^{\bb N}$.

Let $0=t_0<t_1<\cdots<t_n=t$ be a partition of the interval $[0,t]$. The expression \eqref{ex} can be rewritten as
\begin{equation*}
 \sum_{i=0}^{n-1}\tilde{\bb E}_x\Big[ N_{t_{i+1}}(y)-N_{t_{i}}(y)+\tilde{\gamma}(y)\int_{t_i}^{t_{i+1}} \textbf{1}_{[X_s=y]}\,\mbox{d}s\Big]\,.
\end{equation*}
Observe that
\begin{equation*}
\begin{split}
& \tilde{\bb E}_x\Big[ \int_{t_i}^{t_{i+1}} \textbf{1}_{[X_s=y]}\,\mbox{d}s\Big]
=\tilde{\bb E}_y\Big[ \int_{0}^{t_{i+1}-t_i} \textbf{1}_{[X_s=y]}\,\mbox{d}s\Big]\\
=&\tilde{\bb E}_y\Big[ \int_{0}^{t_{i+1}-t_i} \textbf{1}_{[X_s=y]}\,\mbox{d}s\, \textbf{1}_{[N_{t_{i+1}-t_i}(y)=0]}\Big]
+\tilde{\bb E}_y\Big[ \int_{0}^{t_{i+1}-t_i} \textbf{1}_{[X_s=y]}\,\mbox{d}s\, \textbf{1}_{[N_{t_{i+1}-t_i}(y)>0]}\Big]\\
=&(t_{i+1}-t_i)+O_{\tilde\gamma}\big((t_{i+1}-t_i)^2\big)\,,
\end{split}
\end{equation*}
where the function $O_{\tilde{\gamma}}$ satisfies $O_{\tilde{\gamma}}(h)\leq C_{\tilde{\gamma}} h$.
 Then, we only need to prove that
\begin{equation*}\label{exp2}
 \tilde{\bb E}_x\big[ N_{t_{i+1}}(y)-N_{t_{i}}(y)\big]=\tilde{\gamma}(y)(t_{i+1}-t_i)\,.
\end{equation*}
By the Markov Property, it is enough to see  that $ \tilde{\bb E}_x[ N_{h}(y)]=\tilde{\gamma}(y)h$.
This is a consequence of the  $\tilde{\gamma}(y)$ being the parameter of the exponential clock at the site $y$.
\end{proof}

\section{Basic properties of $Q(V)$}\label{apQV}

\begin{lemma}
 $\vert Q(V)-Q(U)\vert\leq \Vert V-U\Vert_{\infty}$.
\end{lemma}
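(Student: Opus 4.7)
The plan is to exploit the Feynman--Kac representation $P_T^V(1)(x)=\bb E_x\bigl[e^{\int_0^T V(X_r)\,dr}\bigr]$ together with the trivial pointwise estimate $V(y)\le U(y)+\|V-U\|_\infty$.

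First, for every trajectory $\omega\in\mc D$ and every $T\ge 0$ I would observe that
\begin{equation*}
\int_0^T V(\omega_r)\,dr\,\le\,\int_0^T U(\omega_r)\,dr\,+\,T\,\|V-U\|_\infty,
\end{equation*}
so exponentiating and taking the expectation $\bb E_x$ gives $P_T^V(1)(x)\le e^{T\|V-U\|_\infty}\,P_T^U(1)(x)$ for every $x\in\{1,\dots,d\}^{\bb N}$. Integrating against $\mu_A$ preserves the inequality.

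Next, applying $\frac{1}{T}\log$ and passing to the limit as $T\to\infty$ (which exists by Lemma \ref{Vi}) yields $Q(V)\le Q(U)+\|V-U\|_\infty$. Swapping the roles of $V$ and $U$ gives the reverse inequality, and together they produce $|Q(V)-Q(U)|\le\|V-U\|_\infty$.

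There is no real obstacle here; the only thing to be slightly careful about is that the bound $e^{\int_0^T V}\le e^{T\|V-U\|_\infty}\,e^{\int_0^T U}$ holds trajectory-by-trajectory, so the constant $e^{T\|V-U\|_\infty}$ comes out of the expectation and out of the $\mu_A$-integral before the logarithm is taken. This is why the Lipschitz constant with respect to the sup norm is exactly $1$.
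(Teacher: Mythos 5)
Your argument is correct and is essentially identical to the paper's: both use the trajectory-wise bound $e^{\int_0^T V}\le e^{T\|V-U\|_\infty}e^{\int_0^T U}$ to get $P_T^V(1)\le e^{T\|V-U\|_\infty}P_T^U(1)$, integrate against $\mu_A$, take $\frac{1}{T}\log$, and let $T\to\infty$. The only cosmetic difference is that you make the symmetrization explicit, whereas the paper writes the two directions at once by comparing a ratio.
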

\begin{proof}Since
\begin{equation*}
\begin{split}
 & P_{T}^{V}(1)(x)\,=\,\bb E_x\Big[e^{\int_0^T V(X_r)\,dr}\Big]
\,\leq\,\bb E_x\Big[ e^{T \Vert V-U\Vert_{\infty}}e^{\int_0^T U(X_r)\,dr}\Big]\,=\, e^{T \Vert V-U\Vert_{\infty}} P_{T}^{U}(1)(x)\,,
\end{split}
\end{equation*}
then,
\begin{equation*}
 \begin{split}
&\vert Q(V)-Q(U)\vert=\lim_{T \to \infty}\, \frac{1}{T}\,\log
\frac{\int P_{T}^V(1)(x)\,\mbox{d} \mu_A(x) }{\int P_{T}^U(1)(x)\,\mbox{d} \mu_A(x) }  \\
&\leq\,\lim_{T \to \infty}\, \frac{1}{T}\,\log\frac{\int e^{T \Vert V-U\Vert_{\infty}} (P_{T}^U 1)(x)\,\mbox{d} \mu_A(x) }{\int P_{T}^U(1)(x)\,\mbox{d} \mu_A(x) }  \\
& =\, \Vert V-U\Vert_{\infty}\,.
\end{split}
\end{equation*}
\end{proof}

\begin{lemma}
 The functional  $V\to Q(V)$ is convex, i.e.,
for all $\alpha\in(0,1)$, we have
\begin{equation*}
 Q(\alpha V+(1-\alpha)U)\leq\alpha Q(V)+(1-\alpha)Q(U)\,.
\end{equation*}
\end{lemma}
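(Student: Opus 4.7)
The plan is to deduce convexity of $Q$ from H\"older's inequality applied to the path-integral representation given by Lemma \ref{Vi}. Recall that
\begin{equation*}
Q(W) = \lim_{T\to\infty}\frac{1}{T}\log \int P_T^W(1)(x)\,d\mu_A(x) = \lim_{T\to\infty}\frac{1}{T}\log \mathbb{E}_{\mu_A}\!\Big[\,e^{\int_0^T W(X_r)\,dr}\Big],
\end{equation*}
so the natural idea is to factor the exponential for $W=\alpha V+(1-\alpha)U$ as a product and apply H\"older with conjugate exponents $p=1/\alpha$ and $q=1/(1-\alpha)$.

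First I would write
\begin{equation*}
e^{\int_0^T [\alpha V+(1-\alpha)U](X_r)\,dr} \;=\; \bigl(e^{\int_0^T V(X_r)\,dr}\bigr)^{\alpha}\cdot\bigl(e^{\int_0^T U(X_r)\,dr}\bigr)^{1-\alpha},
\end{equation*}
and then apply H\"older's inequality under $\bb P_{\mu_A}$ to obtain
\begin{equation*}
\mathbb{E}_{\mu_A}\!\Big[\,e^{\int_0^T [\alpha V+(1-\alpha)U](X_r)\,dr}\Big] \;\leq\; \mathbb{E}_{\mu_A}\!\Big[\,e^{\int_0^T V(X_r)\,dr}\Big]^{\alpha}\,\mathbb{E}_{\mu_A}\!\Big[\,e^{\int_0^T U(X_r)\,dr}\Big]^{1-\alpha}.
\end{equation*}
Taking $\frac{1}{T}\log$ of both sides yields
\begin{equation*}
\frac{1}{T}\log \mathbb{E}_{\mu_A}\!\Big[e^{\int_0^T [\alpha V+(1-\alpha)U](X_r)\,dr}\Big] \leq \alpha\cdot\frac{1}{T}\log\mathbb{E}_{\mu_A}\!\Big[e^{\int_0^T V(X_r)\,dr}\Big] + (1-\alpha)\cdot\frac{1}{T}\log\mathbb{E}_{\mu_A}\!\Big[e^{\int_0^T U(X_r)\,dr}\Big].
\end{equation*}
Passing to the limit $T\to\infty$ and using the existence of the limits defining $Q$ (Lemma \ref{Vi}) on each term then gives exactly
\begin{equation*}
Q(\alpha V+(1-\alpha)U)\;\leq\;\alpha\,Q(V)+(1-\alpha)\,Q(U).
\end{equation*}

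There is no real obstacle here: everything reduces to the integrability of the exponential functionals (which is automatic since $V$ and $U$ are continuous, hence bounded, on the compact space $\{1,\dots,d\}^{\bb N}$) and to the fact, already established in Lemma \ref{Vi}, that the defining limit for $Q$ exists, so we are free to pass to the limit separately on the right-hand side after applying H\"older at finite $T$. In short, convexity of $Q$ is inherited from the convexity of the logarithmic moment generating functional through H\"older's inequality, a standard argument in large deviation theory.
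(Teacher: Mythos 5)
Your proof is correct and follows essentially the same route as the paper: both factor the exponential for $\alpha V+(1-\alpha)U$, apply H\"older's inequality with conjugate exponents $1/\alpha$ and $1/(1-\alpha)$ under $\bb P_{\mu_A}$, take $\frac{1}{T}\log$, and pass to the limit using the existence of $Q$ established in Lemma~\ref{Vi}.
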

\begin{proof}
Using the Holder's inequality, we have
\begin{equation*}
\begin{split}
 &\int P_{T}^{\alpha V+(1-\alpha)U}(1)(x) \,\mbox{d} \mu_A(x) \,=\,\bb E_{\mu_A}\Big[e^{\int_0^T \alpha V(X_r)\,dr}e^{\int_0^T (1-\alpha) U(X_r)\,dr}\Big]\\
&\,\leq\,\Big(\bb E_{\mu_A}\Big[e^{\int_0^T V(X_r)\,dr}\Big]\Big)^{\alpha }\Big(\bb E_{\mu_A}\Big[e^{\int_0^T  U(X_r)\,dr}\Big]\Big)^{(1-\alpha)}\,.
\end{split}
\end{equation*}
Thus,
 \begin{equation*}
 \begin{split}
Q(\alpha V+(1-\alpha)U)\,=\,&\lim_{T \to \infty}\, \frac{1}{T}\,\log
\int P_{T}^{\alpha V+(1-\alpha)U}(1)(x) \,\mbox{d} \mu_A(x)  \\
\leq\,&\lim_{T \to \infty}\, \frac{1}{T}\,\log
 \Big(\int\bb E_{\mu_A}\Big[e^{\int_0^T V(X_r)\,dr}\Big]\Big)^{\alpha }\\
 &\,\,\times
\Big(\int\bb E_{\mu_A}\Big[e^{\int_0^T  U(X_r)\,dr}\Big]\Big)^{(1-\alpha)}\\
=\,&\alpha\lim_{T \to \infty}\, \frac{1}{T}\,\log
 \int\bb E_x\Big[e^{\int_0^T V(X_r)\,dr}\Big] \,\mbox{d} \mu_A(x) \\
 &+(1-\alpha)
\lim_{T \to \infty}\, \frac{1}{T}\,\log\int\bb E_x\Big[e^{\int_0^T  U(X_r)\,dr}\Big]\,\mbox{d} \mu_A(x) \,.
 \end{split}
\end{equation*}
\end{proof}

\section{The associated symmetric process and the Metropolis algorithm}\label{end}

We can consider in our setting an extra parameter $\beta\in \mathbb{R}$ which plays the role of the inverse of temperature.
For a given fixed potential $V$ we can consider the new potential $\beta V$, $\beta \in\mathbb{R}$, and applying what we did before, we
get continuous time equilibrium states described by $\gamma_\beta:=\gamma_{\beta V}$ and $B_\beta:=B_{\beta V}$, in the previous notation.
In other words, we consider the infinitesimal generator $(\mathcal{L}_A-I)+ \beta V$, $\beta >0$, and the associated
main eigenvalue $\lambda_\beta:=\lambda_{\beta V}$.
We denote by $L^{V,\beta}$ the infinitesimal generator of the process that is the continuous time Gibbs state for the
potential $\beta V$, then $L^{V,\beta}$ acts on functions $f$ as
$ L^{V,\beta}(f)(x)= \gamma_{\beta }(x)\, \sum_{\sigma(y)=x}e^{B_{\beta }(y)}\big[f(y)-f(x)\big]$.
We are interested in the stationary probability
$\mu_\beta:=\mu_{B_{\beta V}, \gamma_{\beta V}}$ for the semigroup $\{e^{\,t\,L^{V,\beta} }, \,t\geq 0\}$,
and its weak limit as $\beta \to \infty.$
 This limit would correspond to the continuous time Gibbs state for temperature zero
(see \cite{CLT}, \cite{LMST} and \cite{LMMS} for related results).

The dual of $L^{V,\beta} $ on the Hilbert space $\bb L^2( \mu_{\beta})$ is  ${L^{V,\beta}}^*=\gamma_\beta\, (\mathcal{K}-I)$, where
$\mathcal{K}$ is the Koopman operator.
Notice that the probability $\mu_{\beta}$ is also stationary for the continuous time process with symmetric infinitesimal generator
 $L_{sym}^{V,\beta}:=\frac{1}{2}  (L^{V,\beta} + {L^{V,\beta}}^*).$
In this new process the particle at $x$ can jump to a $\sigma-$preimage $y$ with probability $\frac{1}{2}e^{B_{\beta}(y)}$,
or with probability $\frac{1}{2}$, to the forward image $\sigma(x)$, but, in both ways,   according to a exponential time of parameter $\gamma_\beta(x)$.

The eigenfunction of the continuous time Markov chain with infinitesimal generator
$L_{sym}^{V,\beta}$ can be different from the one with generator $L^{V,\beta}$.
Given $V$ and $\beta$, we denote $\lambda(\beta)_{sym} $ the main eigenvalue that we obtained
from $\beta \,V$ and the generator  $L_{sym}^{V,\beta}$.
The eigenvalues of
${L^{V,\beta}}$ and  ${L^{V,\beta}}^*$ are the same as before.
Now, we will look briefly at how to obtain $\lambda(\beta)_{sym}$.
From the symmetric assumption and  \cite{DS}, we get, for a fixed $\beta$,
\begin{equation*}
 \begin{split}
  \lambda(\beta)_{sym}\,=\,&\sup_{\at{ \phi\in \bb L^2(\mu_{\beta}),}{ \Vert\phi\Vert_2=1}}
\, \int \phi^{1/2}\,\Big[\frac{\gamma_\beta }{2}\big( [  \mathcal{L}_\beta + \mathcal{K}]- 2I\big)\,+\, \beta V\Big] (\phi^{1/2})
 \,  \mbox{d} \mu_{\beta}\\
=\,& \sup_{\at{ \phi\in \bb L^2(\mu_{\beta}),}{ \Vert\phi\Vert_2=1}}  \, \int \phi^{1/2}
\,\Big[ \frac{1}{2}\big( [  \mathcal{L}_\beta + \mathcal{K}]-2I\big)\,+\,\frac{1}{\gamma_\beta}\,\beta V\Big] (\phi^{1/2})   \,
\frac{\mbox{d} \mu_{B_{\beta}}}{\int \frac{1}{\gamma_\beta}\, \mbox{d} \mu_{B_{\beta}}}\\
=\,&\sup_{\at{ \phi\in \bb L^2(\mu_{\beta}),}{ \Vert\phi\Vert_2=1}}  \, \int
\Big\{ \phi^{1/2}  \mathcal{L}_\beta (\phi^{1/2})\,-1\,+\,\frac{1}{ \gamma_\beta}\,\beta V |\phi| \Big\}
\frac{\mbox{d}\mu_{B_{\beta}}}{\int \frac{1}{\gamma_\beta}\, \mbox{d} \mu_{B_{\beta}}}\,.
 \end{split}
\end{equation*}
The second equality is due to the Definition \eqref{muVgamma}, and
 the last one is by the dual, $\mathcal{L}_\beta^*$, on $\bb L^2(\mu_{\beta})$ is $\mathcal{K}$.

Suppose one  changes $\beta$ in such way that $\beta$ increases converging to $\infty$, then one can ask about  the asymptotic  behavior of the stationary Gibbs probability
 $\mu_{\beta}$. One should analyze first what that happens with the optimal $\phi$ (or almost optimal) in the maximization problem above.
In order to answer this last question, we use,
in  $\bb L^2(\mu_{\beta})$,  the Schwartz inequality, and we obtain
$$|\<\phi^{1/2} ,\, \,\mathcal{L}_\beta (\phi^{1/2})\>_{\mu_{\beta}}| \,\leq\,
 \Vert\phi\Vert_2\,\, \Vert\mathcal{L}_\beta (\phi^{1/2})\Vert_2\leq d \Vert\phi\Vert_2=d .$$
Note that, for a fixed large $\beta$, the positive value  $\gamma_\beta(x)=1-\beta V(x)+ \lambda_{\beta V}$ became smaller close by the supremum of $V$.
Which means that $\frac{1}{\gamma_\beta(x)}$ became large close by the supremum of $V$.
Moreover, for fixed $\beta$, the part $\, \int
 \beta V |\phi|\,\,\, \frac{1}{ \gamma_\beta}\,
\frac{\mbox{d}\mu_{B_{\beta}}}{\int \frac{1}{\gamma_\beta}\, \mbox{d} \mu_{B_{\beta}}}\,$ of the above expression  increase if we consider $|\phi|$ such that the big part of its mass is more and more close by to the supremum of $\beta V$.
Note that, for fixed $\beta$, the part $\, \int
\{ \phi^{1/2}  \mathcal{L}_\beta (\phi^{1/2})\,-1\}
\frac{\mbox{d}\mu_{B_{\beta}}}{\int \frac{1}{\gamma_\beta}\, \mbox{d} \mu_{B_{\beta}}}\,$ of the above expression is bounded and just depends on $\phi$.
The supremum of  $\, \int
 \beta V |\phi|\,\,\, \frac{1}{ \gamma_\beta}\,
\frac{\mbox{d}\mu_{B_{\beta}}}{\int \frac{1}{\gamma_\beta}\, \mbox{d} \mu_{B_{\beta}}}\,$ grows with $\beta$  at least of order $\beta$.

Therefore, for large $\beta$,
the maximization above should be obtained by taking $\phi=\phi_\beta$ in $\bb L^2(\mu_{\beta})$
such that is more and more concentrated close by the supremum of $\beta V$.
In this way, when $\beta \to \infty$ the "almost" optimal $\phi$ has a tendency to localize the points where the supremum of $V$ is attained.
If there is a unique point $z_0$ where $V$ is optimal, then $\lambda_\beta \sim \beta V(z_0)$. The probability $\mu_\beta$ will converge to the
delta Dirac on the point $z_0.$  This procedure is quite similar with  the process of determining ground states for a given potential via
an approximation by Gibbs states which have a very small value of temperature (see for instance \cite{BLL}).

The Metropolis algorithm has several distinct applications.  In one of them,  it can be used to maximize a function on a quite large space
(see  \cite{DS2}  and \cite{Leb}). Suppose $V$ has a unique point of maximal value.
The basic idea is to produce a random algorithm that can explore the state space  and localize the point of maximum, this problem may happen with a deterministic algorithm.
The use of continuous time paths resulted in some advantages in the method.
The randomness assures that the algorithm does note stuck on a point of local maximum of some function $V$. The setting we consider here has several similarities with the
usual procedure. When we take $\beta$ large, then  the probability $\mu_\beta$ will be very close to the
delta Dirac on the point of maximum for $V$ as we just saw. This is so because  the parameter $\frac{1}{\gamma_\beta(x)}$
of the exponential distribution became large close by the supremum of $V$.  In the classical Metropolis algorithm there is link on $\beta $ and $t$ which is
necessary for the convergence (cooling schedule in \cite{S1}). In a forthcoming paper, using our large deviation results, we will investigate the question:
given small  $\epsilon$ and $\delta$, with probability bigger than $1-\delta$, the empirical path on the one-dimensional spin lattice will
stay, up to a distance smaller the $\epsilon$ of the maximal value, a proportion $1-\delta$ of the time $t$, if $t$ and $\beta $ are chosen in a certain way (to be understood).
In order to do that we have to use the large deviation results we get before.

\section{Ergodicity of the shift $\Theta_t: \mc D\to\mc D$ relative to $\bb P_{\mu_{A}}$}

The probability $\bb P_{\mu_{A}}$ was obtained from $\{X_t=X_t^{\mu_A}\!\!,\;\,\, t \geq 0\}$.

This section is devoted to show the ergodicity for the continuous time shift $\Theta_t: \mc D\to\mc D$, when we have that the limit below exists:
$$ \lim_{t\to\infty}\frac{1}{t}\int_0^t P_{s}(F)(x)\,ds=\int F\,d\mu_A.$$
The ideas presented here are based in \cite{L1} and \cite{L2}.

Consider $f,g$  functions of $n$ variables. For all $0\leq t_1<\dots< t_n$ define the functions
$F$ and $G$ in one variable by
$$F(x)= \bb E_{x}\big[f(X_{0},X_{t_2-t_1}\dots,X_{t_n-t_1})\big]$$
$$G(X_{t_n})= \bb E_{\mu_A}\big[g(X_{t_1},\dots,X_{t_n})|X_{t_n}\big]$$
Using the Markov property, for $s>t_n-t_1$, we can write
\begin{equation*}
 \begin{split}
  \bb E_{\mu_A}\big[g(X_{t_1},\dots,X_{t_n})f(X_{t_1+s},\dots,X_{t_n+s})\big]
  &=\bb E_{\mu_A}\Big[g(X_{t_1},\dots,X_{t_n})\bb E_{\mu_A}\big[f(X_{t_1+s},\dots,X_{t_n+s})|\mc F_{t_n}\big]\Big]\\
  &=\bb E_{\mu_A}\Big[\bb E_{\mu_A}\big[g(X_{t_1},\dots,X_{t_n})|X_{t_n}\big]\bb E_{\mu_A}\big[f(X_{t_1+s},\dots,X_{t_n+s})|\mc F_{t_n}\big]\Big]\\
  &=\bb E_{\mu_A}\Big[G(X_{t_n})\bb E_{X_{t_n}}\big[f(X_{t_1+s-t_n},\dots,X_{s})\big]\Big].\\
 \end{split}
\end{equation*}
Since $\{X_t,t\geq 0\}$ is stationary, we obtain
\begin{equation*}
 \begin{split}
  \bb E_{\mu_A}\big[f(X_{t_1+s},\dots,X_{t_n+s})\big]
  &=\bb E_{\mu_A}\Big[G(X_{0})\bb E_{X_{0}}\big[f(X_{t_1+s-t_n},\dots,X_{s})\big]\Big]\\
  &=\bb E_{\mu_A}\Big[G(X_{0})\bb E_{X_{0}}\big[\bb E_{\mu_A}[f(X_{t_1+s-t_n},\dots,X_{s})|\mc F_{t_1+s-t_n}]\big]\Big].
 \end{split}
\end{equation*}
Applying again the Markov property, we get
\begin{equation*}
 \begin{split}
\bb E_{\mu_A}\Big[G(X_{0})\bb E_{X_{0}}\big[\bb E_{X_{t_1+s-t_n}}[f(X_{0},\dots,X_{X_{t_n-t_1}})]\big]\Big]&=
\bb E_{\mu_A}\Big[G(X_{0})\bb E_{X_{t_1+s-t_n}}\big[f(X_{0},\dots,X_{X_{t_n-t_1}})\big]\Big]\\
&=\bb E_{\mu_A}\big[G(X_{0})F(X_{t_1+s-t_n})\big]\\
&=\int \bb E_{x}\big[G(X_{0})F(X_{t_1+s-t_n})\big]d\mu_A(x)\\
&=\int G(x) P_{t_1+s-t_n}(F)(x)d\mu_A(x).
 \end{split}
\end{equation*}
Then, we have that
\begin{equation*}
 \begin{split}
  \lim_{t\to\infty}\frac{1}{t}\int_0^t\bb E_{\mu_A}\big[g(X_{t_1},\dots,X_{t_n})f(X_{t_1+s},\dots,X_{t_n+s})\big]\, ds&
  =\lim_{t\to\infty}\int G(x)\frac{1}{t}\int_0^t P_{t_1+s-t_n}(F)(x)\,ds\,d\mu_A(x)
 \end{split}
\end{equation*}
The limit
$$ \lim_{t\to\infty}\frac{1}{t}\int_0^t P_{t_1+s-t_n}(F)(x)\,ds$$
exists and it is equal to $\int F\, d\mu_A$ (see o beginning of the Section 2).
Thus,
\begin{equation*}
 \begin{split}
  \lim_{t\to\infty}\frac{1}{t}\int_0^t\bb E_{\mu_A}\big[g(X_{t_1},\dots,X_{t_n})f(X_{t_1+s},\dots,X_{t_n+s})\big]\, ds&
  =\int G(x)\,d\mu_A(x)\,\int F(x)\,d\mu_A(x)\\
  &= \bb E_{\mu_A}\big[g(X_{t_1},\dots,X_{t_n})\big] \bb E_{\mu_A}\big[f(X_{t_1},\dots,X_{t_n})\big]\,.
 \end{split}
\end{equation*}
Now, consider $f$ such that $f(\Theta_s\circ(X_{t_1}(w),\dots,X_{t_n})(w))=f(X_{t_1}(w),\dots,X_{t_n}(w))$, for all $s$ and $w$, then
\begin{equation*}
 \begin{split}
  E_{\mu_A}\big[g(X_{t_1},\dots,X_{t_n})f(X_{t_1},\dots,X_{t_n})\big]\, &=
  \lim_{t\to\infty}\frac{1}{t}\int_0^t\bb E_{\mu_A}\big[g(X_{t_1},\dots,X_{t_n})f(X_{t_1},\dots,X_{t_n})\big]\, ds\\
  & =\lim_{t\to\infty}\frac{1}{t}\int_0^t\bb E_{\mu_A}\big[g(X_{t_1},\dots,X_{t_n})f(\Theta_s\circ(X_{t_1},\dots,X_{t_n}))\big]\, ds\\
  &=\lim_{t\to\infty}\frac{1}{t}\int_0^t\bb E_{\mu_A}\big[g(X_{t_1},\dots,X_{t_n})f(X_{t_1+s},\dots,X_{t_n+s})\big]\, ds\\
  &= \bb E_{\mu_A}\big[g(X_{t_1},\dots,X_{t_n})\big] \bb E_{\mu_A}\big[f(X_{t_1},\dots,X_{t_n})\big]\,.
 \end{split}
\end{equation*}
Take $g$ equal to $f$, then
\begin{equation*}
 \begin{split}
  E_{\mu_A}\big[f(X_{t_1},\dots,X_{t_n})^2\big]\,
  &=  \bb E_{\mu_A}\big[f(X_{t_1},\dots,X_{t_n})\big]^2\,.
 \end{split}
\end{equation*}
The last equality implies that $f$ is constant (almost surely).

Considering that $\{X_t, t\geq 0\}$ is the canonical process, i.e.,  $X_t(w)=w_t$, for all $t\geq 0$, and $w\in \mc D$,
we can rewritten our result as:
given a function
$$ w\in \mc D\mapsto f(w_{t_1},\dots,w_{t_n}),$$
which is invariant for the continuous time shift $\Theta_s:\mc D\to\mc D$, we get that this function is constant.

Note that all mensurable function $H:\mc D\to \bb R$ depends on a countable set of coordinates.
Then, without loss of generality, suppose that $H(w)=h(w_{s_1},\dots,w_{s_n},\dots)$, where $h:\Big(\{1,\dots,d\}^{\bb N}\Big)^{\bb N}\to \bb R$.
Therefore, using  approximation arguments one also get that $H$ is constant.

\end{document}